\documentclass[11pt]{article}

\usepackage[utf8]{inputenc}
\usepackage[english]{babel}

\usepackage[english]{babel}
\usepackage{amsfonts}
\usepackage{amstext}
\usepackage[usenames]{color}
\usepackage{graphicx}
\usepackage{amsmath}
\usepackage{amssymb}
\usepackage{amsthm}
\usepackage{appendix}
\usepackage{euscript}
\usepackage{enumerate}

\usepackage{color}

\usepackage{chngcntr}
\usepackage{apptools}

\numberwithin{equation}{section}
  
\graphicspath{{Graphics/}}
\textwidth=180mm
\textheight=240mm
\topmargin=-2.54cm
\oddsidemargin=-0.54cm
\tolerance=9000
\hbadness=9000

\newtheorem{definition}{Definition}
\newtheorem{lemma}{Lemma}
\newtheorem{theorem}{Theorem}
\newtheorem{proposition}{Proposition}
\newtheorem{remark}{Remark}
\newtheorem{corollary}{Corollary}

\AtAppendix{\counterwithin{definition}{section}}
\AtAppendix{\counterwithin{lemma}{section}}
\AtAppendix{\counterwithin{theorem}{section}}
\AtAppendix{\counterwithin{proposition}{section}}
\AtAppendix{\counterwithin{remark}{section}}
\AtAppendix{\counterwithin{corollary}{section}}

\renewcommand{\Im}{\operatorname{Im}}

\newcommand{\ord}{\mathrm{O}}
\newcommand{\osmall}{\mathrm{o}}
\renewcommand{\Re}{\operatorname{Re}}
\newcommand{\I}{\mathrm{i}}

\begin{document}
\title{Global conservative solutions of the nonlocal NLS equation beyond blow-up}
\author{Yan Rybalko$^{\dag}$ and Dmitry Shepelsky$^{\dag}$\\
 \small\em {}$^\dag$ B.Verkin Institute for Low Temperature Physics and Engineering of the National Academy of Sciencies of Ukraine}

\date{}

\maketitle

\begin{abstract}
We consider the Cauchy problem for 
the integrable nonlocal nonlinear Schr\"odinger (NNLS) equation
$
\I\partial_t q(x,t)+\partial_{x}^2q(x,t)+2\sigma q^{2}(x,t)\overline{q(-x,t)}=0
$
with initial data $q(x,0)\in H^{1,1}(\mathbb{R})$.
It is known that the NNLS equation is integrable and it has soliton solutions, which can have isolated finite time blow-up points.
The main aim of this work is to propose a suitable concept for continuation of  weak $H^{1,1}$ local solutions of the general 
Cauchy problem (particularly, those
admitting  long-time soliton resolution) beyond possible singularities.
Our main tool is the inverse scattering transform method in the form of the Riemann-Hilbert problem combined with the PDE existence theory for nonlinear dispersive equations.
\end{abstract}

\section{Introduction}
\label{intr}
We consider the initial value problem for a nonlocal nonlinear Schr\"odinger (NNLS) equation
\begin{subequations}\label{iv-nnls}
	\begin{align}\label{nnls}
	&\I\partial_t q(x,t)+\partial_{x}^2q(x,t)+2\sigma q^{2}(x,t)\overline{q(-x,t)}=0,
	\quad x,t\in\mathbb{R},\quad \sigma=\pm 1, \\
	\label{iv}
	&q(x,0)=q_{0}(x),
	\end{align}
\end{subequations}
with the initial data $q_0(x)\in H^{1,1}(\mathbb R)$. Here and below,
$\bar{q}$ denotes the complex conjugate of $q$ and $H^{s,m}(\mathbb{R})$ denotes the weighted Sobolev space:
$$
H^{s,m}(\mathbb{R})=
\left\{
f(x): f,\partial^s_x f, x^m f\in L^2(\mathbb{R})
\right\},\quad s,m\geq 0,
$$
with the norm
\begin{equation}
\|f\|_{H^{s,m}}^2=\|f\|^2_{L^2}+\|\partial^s_x f\|^2_{L^2}+
\|x^mf\|^2_{L^2}.
\end{equation}
By analogy with the conventional  (local) nonlinear Schr\"odinger (NLS) equations, we will call \eqref{nnls} with $\sigma=1$ and $\sigma=-1$ the focusing and defocusing NNLS equation respectively.
Notice, however, that the nonlocal equations \eqref{nnls} have substantially different properties compared with their local counterparts.
For example, the linear operator associated with the defocusing NNLS can have discrete spectrum \cite{AMN} (see also Proposition \ref{defoc-zeros} below), while the focusing NNLS supports both dark and bright soliton solutions simultaneously \cite{SMMC14}.

The NNLS equation was introduced by Ablowitz and Musslimani in \cite{AMP} as a new nonlocal reduction of the Ablowitz-Kaup-Newell-Segur (AKNS) system \cite{AS} that preserves the \textit{PT}-symmetry property \cite{BB}: if $q(x,t)$ is a solution of \eqref{nnls}, so is $\overline{q(-x,-t)}$.
Moreover, this equation can be viewed as an example of the so-called Alice-Bob systems \cite{L18}, where the evolution of the solution depends on the state at non-neighboring points.
Also notice that the NNLS equation is gauge equivalent to the coupled Landau-Lifshitz equation \cite{GA, R21}, so it can be useful in the theory of magnetism.

The initial value problem \eqref{iv-nnls} was firstly considered in \cite{AMN}, where the authors developed the inverse scattering transform (IST) method and obtained exact formulas for one and two soliton solutions (see also \cite{MHW20} for the generalization to  nonlocal NLS hierarchies).
The general formulas for multisoliton solutions on the  zero background were obtained by a number of authors using different approaches: in \cite{FLAM18}, a mixture of  Hirota's bilinear method and the Kadomtsev--Petviashvili hierarchy reduction method; in \cite{Y19}, the Riemann-Hilbert (RH) approach;
in \cite{MS19}, the B\"acklund-Darboux transformation;
in \cite{WZ22}, the Dbar method.
Moreover, in \cite{Z19} the relation between soliton solutions and a motion of curves in $\mathbb{C}^3$ was discussed, and in \cite{GS} the authors proved the complete integrability of \eqref{iv-nnls}.

The focusing NNLS equation has a family of one soliton solutions with smooth and rapidly decaying initial profile
(see Figure \ref{one-soliton-f}):
\begin{equation}\label{one-soliton}
\begin{split}
q_1^{\mathrm{sol}}(x,t)=&
\frac{2(\rho_{1,1}-\rho_{2,1})}
{\gamma_{2,1}^{-1}e^{-2\rho_{2,1}x-4\I\rho_{2,1}^2t}
	-\gamma_{1,1}e^{-2\rho_{1,1}x-4\I\rho_{1,1}^2t}},\\
&\text{with}
\quad (-1)^{j+1}\rho_{j,1}>0,\,|\gamma_{j,1}|=1,\,j=1,2.
\end{split}
\end{equation}
If $\rho_{1,1}\neq-\rho_{2,1}$, then the
solutions blow-up
at $(x,t)=(0,t_n)$, $n\in\mathbb{Z}$ with
\begin{equation}
t_n=\frac{\arg(\gamma_{1,1}\gamma_{2,1})+2\pi n}
{4(\rho_{1,1}^2-\rho_{2,1}^2)}.
\end{equation}
The existence of  singular solutions like  \eqref{one-soliton} implies  that the well-posedness theory for the Cauchy problem \eqref{iv-nnls} requires a careful investigation.
This question was  addressed by Genoud in \cite{G17}, where the author proved the existence and uniqueness of a \textit{local} weak $H^1$ solution.
Regarding the global existence, Genoud showed that there exists an arbitrary small (in $H^1$) soliton profile of the form \eqref{one-soliton}, whose evolution exhibits a blow-up in a finite time.
The main technical difficulty for proving global well-posedness in $H^{1}$
is due to the fact that
the conservation laws, being nonlocal,  do not preserve any reasonable norm and can be negative (see \eqref{cons-mass} and \eqref{cons-energy} below).

\begin{figure}[h]
	\begin{minipage}[h]{0.99\linewidth}
		\centering{\includegraphics[width=0.79\linewidth]
			{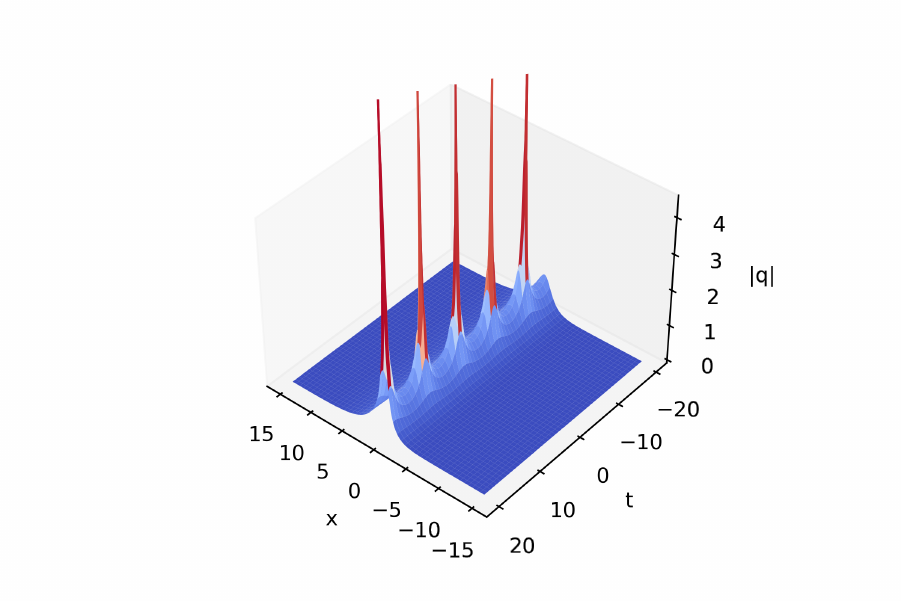}}
		% Here we take $A=\frac{5}{\pi}$.
		\caption{Modulus of soliton \eqref{one-soliton} with
			$\rho_{1,1}=\frac{1}{2}$,
			$\rho_{2,1}=-\frac{1}{4}$,
			$\gamma_{1,1}=e^{\I\frac{\pi}{2}}$ and
			$\gamma_{2,1}=e^{\I\frac{\pi}{6}}$.}
		\label{one-soliton-f}
	\end{minipage}
\end{figure}

Some steps towards the development of the global theory for the NNLS equation have recently been made, see \cite{CLW22} and \cite{ZF22}.
In \cite{CLW22} the authors proved the global existence of the solution in the super-critical function spaces $E_\sigma^s$, where $s<0$ and $\sigma>-\frac{1}{2}$, with an additional requirement that the support of the Fourier transform of the initial data lies in $[\varepsilon_0,\infty)$ for some $\varepsilon_0>0$.
Notice that though such class of initial data involves rather rough functions, it does not contain one soliton (smooth) initial profile of \eqref{one-soliton}.
%(\red{at $t=t_n$?}).
In \cite{ZF22} the authors used the IST approach for obtaining global weak solutions in the weighted Sobolev space $H^{1,1}$ assuming that the initial data is small (cf.\,\,Theorem \ref{H-gl-sol} below).
Unlike the case of, for example, the defocusing NLS equation \cite{DZ03} or the derivative NLS equation \cite{JLPS20, LYF22, PSS17, PS18}, the jump matrix of the associated RH problem for the NNLS equation does not satisfy appropriate symmetries
which would imply
the solvability of this RH problem \cite{Z89-1}.
Therefore, to ensure that a solution to the RH problem exists, \cite{ZF22} imposes a small norm assumption
on the initial data, which rules out any soliton solutions, particularly, \eqref{one-soliton}, see Remark \ref{H1-H-11} below.

Though the one soliton solution \eqref{one-soliton} does not exist globally as a bounded function, it seems natural to consider this solution in the whole plane $(x,t)$ outside  the blow-up points $(0,t_n)$, $n\in\mathbb{Z}$.
Indeed, the function \eqref{one-soliton} decays exponentially fast as $x\to\pm\infty$ for any $t\in\mathbb{R}$ and it satisfies the focusing NNLS equation for all $(x,t)\neq(0,t_n)$.
So, the one soliton (as other types of exact solutions with singularities, see \cite{MS19}) can be continued after  the blow-up points directly by its exact formula.
Then a natural question arises how to understand solutions, with blow-up points, of the general Cauchy problem, especially with initial data in Sobolev spaces.

%when we can't consider the solution pointwise, as we did for the one-soliton above.

The problem of continuation of a solution of an evolution  PDE after blow-up points has a long history and it involves a number of different approaches, see, e.g., \cite{GMP15}.
Here we mention the related problem for the $L^2$-critical NLS equation, where the blow-up solution $q(\cdot, t)$ is approximated by  globally defined solutions $q_\delta(\cdot, t)$ as $\delta\to0$ \cite{M92-CMP}.
Also such questions arise for  integrable Camassa-Holm-type equations.
For these systems, one  method for obtaining global solutions is the reformulation of  the original problem in the form of a semilinear system of ODEs with values in a Banach space \cite{BC07}.
%Another approach is to introduce a new distance with special properties, defined for  multipeakons, which are dense in the phase space \cite{BF05}.
We refer the reader to a review article \cite{LS22} and references therein for more results on wave breaking phenomena and global solutions for peakon equations.

The goal of this paper is to continue a solution of \eqref{iv-nnls} beyond the blow-ups  for a class of initial data in $H^{1,1}(\mathbb{R})$, which includes  multisoliton solutions.
To this end we use the IST method in the form of a Riemann-Hilbert problem,
%for initial values in both Schwartz space $\mathcal{S}(\mathbb{R})$ and Sobolev space $H^{1,1}(\mathbb{R})$.
which results in obtaining the reconstruction formula for the solution $q(x,t)$, formulated in terms of the associated spectral data,
%through the basic RH problem
see \eqref{q(x,t)} below.
%which is formulated in terms of the associated spectral data.
The solution of the RH problem can be evaluated separately  for (almost) all $x,t\in\mathbb{R}$; such a ``locality'' allows us to circumvent  possible blow-up points as
in the case of the soliton solutions \eqref{one-soliton}.
%The RH problem can be considered ``locally'' w.r.t. parameters $x$ and $t$ i.e. it can be evaluated separately for any $x,t\in\mathbb{R}$
Namely, we determine
a set of blow-up points $(x,t)\in\mathcal{B}\subset\mathbb{R}^2$ as solutions of a system of two (nonlinear) algebraic equations written in terms of the solution of the associated RH problem (see \eqref{B-L-sol} and Remark \ref{char-B} below).
For all other points,
$(x,t)\in\mathbb{R}^2\setminus\mathcal{B}$,
we can evaluate (under certain conditions on $q_0$) the solution of the RH problem and, consequently, $q(x,t)$, which has the required smoothness and decay properties.

Moreover, since $q(x,t)$ is defined via the inverse transform, we have, almost ``for free'', that the solution continued after a blow-up  is conservative, i.e., for all $t\in\mathbb{R}$ such that $(\mathbb{R}\times\{t\})\cap\mathcal{B}=\emptyset$, the total ``mass'' $\tilde{M}$ and ``energy'' $\tilde{E}$
associated with $q(\cdot,t)$
are the same as those related to  $q_0(\cdot)$  (see Remark \ref{concl-th-1}, Item (i) and Remark \ref{concl-th-2}, Item (i) below).
Another advantage of using the RH approach is the efficiency of numerical calculations.
The computational cost of computing the solution of the RH problem does not depend on the values of $x$ and $t$ (the ``locality'' of the RH problem is manifested here as well), so it is possible to calculate numerically the set of blow-up points $\mathcal{B}$ and then evaluate the solution $q(x,t)$ both at regular finite  points  $(x,t)$ and asymptotically \cite{TO16}.

The article is organized as follows.
Following \cite{AMN} and \cite{RS19}, in Section \ref{ist-RH} we develop  the IST method for the Cauchy problem \eqref{iv-nnls}.
In Section \ref{sm-iv} we consider problem \eqref{iv-nnls} with $q_0(x)$ in the Schwartz space.
In Theorem \ref{th-class-glob} we prove existence and uniqueness of the classical solution
%in the sense of Definition \ref{}
in the case of small initial data. Then in Theorems \ref{th-class-one-sol} and \ref{th-class-L-sol} we construct the global solution with possible finite time blow-up points for small perturbations of one and multisoliton initial profiles respectively, which we call  \textit{blowing-up Schwartz solutions} (see Definition \ref{blow-up-S}).
These solutions are smooth, rapidly decay as $x\to\pm\infty$,  and satisfy the NNLS equation at every regular point $(x,t)$.
In Section \ref{bl-up-H11}, we first prove the global existence and uniqueness of weak $H^{1,1}$ solutions for small initial data, see Theorem \ref{H-gl-sol} (cf.\,\,\cite{ZF22}),
and then
formulate the concept of a \textit{pointwise $H^{1,1}$ solution} (see Definition \ref{blow-up-H11}), which can have finite time blow-up points.
%is given in terms of the associated RH problem,
Then we provide a global pointwise $H^{1,1}$ solution for $q_0(x)\in H^{1,1}(\mathbb{R})$ in the neighborhood of one and multisoliton initial profiles and prove that this solution can be uniformly, in a sense, approximated  by  blowing-up Schwartz solutions (see Theorems \ref{th-H11-one-sol} and \ref{th-H11-L-sol}), which is the main result of this paper.

%Indeed, at first we derive the solutions for smooth initial data, which satisfy the equation pointwise except collision points and decay rapidly for the large $x$ for any fixed $t$, see Theorems \ref{th-class-one-sol}, \ref{th-class-L-sol} (cf.\,\,exact solution \eqref{one-soliton}).
%Then by approximating the $H^{1,1}$ initial data by the functions in Schwartz space, we will show that the $H^{1,1}$ global solution defined by the inverse transform can be uniformly approximated by the punctured smooth solutions, obtained previously, see Theorems \ref{th-H11-one-sol} and \ref{th-H11-L-sol}.

\section{IST method in the form of the Riemann-Hilbert problem}\label{ist-RH}

The IST method was firstly developed for the NNLS equation with zero background in \cite{AMN} (see also \cite{GS, RS19}).
Here we briefly recall the scheme of the direct and inverse transform for  problem \eqref{iv-nnls}.

\subsection{Direct transform}

Consider the system of two linear equations (Lax pair) for a $2\times2$-valued function $\Phi(x,t,k)$:
\begin{subequations}\label{Lax}
	\begin{align}\label{ZS}
	&\partial_x\Phi+\I k\sigma_{3}\Phi=U\Phi,\\
	\label{t_evol}
	&\partial_t\Phi+2\I k^{2}\sigma_{3}\Phi=V\Phi,
	\end{align}
\end{subequations}
where $\sigma_3=
\left(
\begin{smallmatrix}
1& 0\\
0 &-1
\end{smallmatrix}
\right)
$
is the third Pauli matrix,
\begin{equation}
U=\begin{pmatrix}
0& q(x,t)\\
-\sigma \overline{q(-x,t)}& 0\\
\end{pmatrix},
\quad
V=\begin{pmatrix}
V_{11}& V_{12}\\
V_{21}& -V_{11}\\
\end{pmatrix},
\end{equation}
with $V_{11}=\I \sigma q(x,t)\overline{q(-x,t)}$, $V_{12}=2kq(x,t)+\I\partial_x q(x,t)$, and
$V_{21}=-2k\sigma \overline{q(-x,t)}+\I \sigma \partial_x(\overline{q(-x,t)})$.
Then the integrable NNLS equation is equivalent to the compatibility condition of the system \eqref{Lax} \cite{AMP}, i.e.
$$
\text{equation }
\eqref{nnls}\Leftrightarrow
\partial_t\hat U-\partial_x\hat V+[\hat U, \hat V]=0,
$$
where $\hat U=U-\I k\sigma_3$, $\hat V=V-2\I k^2\sigma_3$ and $[A,B]=AB-BA$ is the matrix commutator.
In this sense, system \eqref{Lax} linearizes equation \eqref{nnls}.

Define two (Jost) solutions $\Phi_1$ and $\Phi_2$ of \eqref{Lax}, which are normalized as follows:
$$
\Phi_j(x,t,k)e^{(\I kx+2\I k^{2}t)\sigma_{3}}\to I,\quad x\to (-1)^j\infty,\quad t,k\in\mathbb{R},\quad j=1,2,
$$
where $I$ is the $2\times2$ identity matrix.
It can be shown that if $q(\cdot, t)\in L^1(\mathbb{R})$ for all $t\in\mathbb{R}$, then $\Phi_j(x,t,k)=\Psi_j(x,t,k)e^{(-\I kx-2\I k^{2}t)\sigma_{3}}$,  where $\Psi_j$, $j=1,2$ are the unique solutions of the following Volterra integral equations:
\begin{subequations}\label{Psi}
	\begin{align}\label{Psi-1}
	&\Psi_1(x,t,k)=I+\int_{-\infty}^{x}e^{\I k(y-x)\sigma_3}U(y,t)\Psi_1(y,t,k)e^{-\I k(y-x)\sigma_3}\,dy,\quad
	k\in\mathbb{R}, \\
	\label{Psi-2}
	&\Psi_2(x,t,k)=I+\int_{+\infty}^{x}e^{\I k(y-x)\sigma_3}U(y,t)\Psi_2(y,t,k)e^{-\I k(y-x)\sigma_3}\,dy,\quad
	k\in\mathbb{R}.
	\end{align}
\end{subequations}
Since $U(x,t)$ has zero trace, we have $\det\Psi_j=\det\Phi_j=1$ for all $x,t,k\in\mathbb{R}$.
Then $\Phi_1$ and $\Phi_2$,
being two different solutions of the differential equations \eqref{Lax}, can be related by a matrix $S(k)$ with $\det S(k)=1$:
\begin{equation}
\label{S-def}
\Phi_1(x,t,k)=\Phi_2(x,t,k)S(k),\quad k\in\mathbb{R}.
\end{equation}

Let $\Lambda:=\left(
\begin{smallmatrix}
0 & \sigma\\1 & 0
\end{smallmatrix}\right)$,
$\mathbb{C}^{\pm}=\left\{k\in\mathbb{C}\,|\pm\Im k>0\right\}$, and denote by $A^{[j]}$  the $j$-th column of the matrix $A$.
From the integral equations \eqref{Psi} it follows that $\Psi_1^{[1]}(x,t,k)$ and $\Psi_2^{[2]}(x,t,k)$ can be analytically continued for $k\in\mathbb{C}^+$ whereas $\Psi_1^{[2]}(x,t,k)$ and $\Psi_2^{[1]}(x,t,k)$ can be analytically continued for $k\in\mathbb{C}^-$.
Moreover, the symmetry $\Lambda \overline{U(-x,t)}\Lambda^{-1} = U(x,t)$ implies the connections between $\Phi_1$ and $\Phi_2$:
\begin{subequations}\label{phi-sym}
	\begin{align}
	&\Lambda\overline{\Phi_1(-x,t,-k)}\Lambda^{-1}=\Phi_2(x,t,k),&& x,t,k\in\mathbb{R},\\
	\label{phi-sym-b}
	&\Lambda\overline{\Phi_1^{[1]}(-x,t,-\bar k)}=\Phi_2^{[2]}(x,t,k),&& x,t\in\mathbb{R},\,k\in\mathbb{C}^{+},\\
	\label{phi-sym-c}
	&\Lambda\overline{\Phi_1^{[2]}(-x,t,-\bar k)}=\Phi_2^{[1]}(x,t,k),&& x,t\in\mathbb{R},\,k\in\mathbb{C}^{-}.
	\end{align}
\end{subequations}
In turn, from \eqref{S-def} and \eqref{phi-sym} we conclude that the scattering matrix $S(k)$ can be written in the form
\begin{equation}
\label{S(k)}
S(k)=\begin{pmatrix}
a_{1}(k)& -\sigma\overline{b(-k)}\\
b(k)& a_2(k)\\
\end{pmatrix}
,\quad k\in\mathbb{R},
\end{equation}
with some $b(k)$, $a_1(k)$, and $a_2(k)$, which are referred to as the spectral (or scattering) functions.
We summarize some properties of $a_1, a_2$ and $b$ in Proposition \ref{a_j,b} (we use the notation $\overline{\mathcal{D}}$ for the closure of the set $\mathcal{D}$).
\begin{proposition}\label{a_j,b}\cite{AMN}
	The spectral functions $b(k)$, $a_1(k)$ and $a_2(k)$ have the following properties:
	\begin{enumerate}
		\item Analyticity:
		$a_{1}(k)$ is analytic in $k\in\mathbb{C}^{+}$
		and continuous in
		$\overline{\mathbb{C}^{+}}$;
		$a_{2}(k)$ is analytic in $k\in\mathbb{C}^{-}$
		and continuous in
		$\overline{\mathbb{C}^{-}}$.
		\item Asymptotics for the large $k$:
		$a_{1}(k)=1+\ord\left(k^{-1}\right)$, $k\to\infty$, $k\in\overline{\mathbb{C}^+}$;
		$a_{2}(k)=1+\ord\left(k^{-1}\right)$, $k\to\infty$,
		$k\in\overline{\mathbb{C}^-}$;
		$b(k)=\ord\left(k^{-1}\right)$, $k\to\infty$, $k\in\mathbb{R}$.
		\item Symmetries:
		$\overline{a_{1}(-\bar{k})}=a_1(k)$,
		$k\in\overline{\mathbb{C}^{+}}$;
		$\overline{a_{2}(-\bar{k})}=a_2(k)$,
		$k\in\overline{\mathbb{C}^{-}}$.
		\item Determinant relation:
		$a_{1}(k)a_{2}(k)+\sigma b(k)\overline{b(-k)}=1$, $k\in{\mathbb R}$.
	\end{enumerate}
\end{proposition}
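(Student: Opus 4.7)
The plan is to reduce each of the four items to a short computation on the columns of the Jost solutions $\Psi_j$ and the Volterra integral equations \eqref{Psi}. The starting point is to rewrite the scattering relation $\Phi_1=\Phi_2 S$ as determinantal representations of the entries of $S(k)$: reading off the columns of \eqref{S-def} and using $\det\Phi_2\equiv 1$, one has, for $k\in\mathbb R$,
\[
a_1(k)=\det\bigl(\Phi_1^{[1]},\Phi_2^{[2]}\bigr),\quad a_2(k)=\det\bigl(\Phi_2^{[1]},\Phi_1^{[2]}\bigr),\quad b(k)=\det\bigl(\Phi_2^{[1]},\Phi_1^{[1]}\bigr),
\]
each expression being independent of $(x,t)$. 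Substituting $\Phi_j=\Psi_j e^{-\I(kx+2k^2 t)\sigma_3}$, the exponentials cancel in the first two formulas and give $a_1=\det(\Psi_1^{[1]},\Psi_2^{[2]})$, $a_2=\det(\Psi_2^{[1]},\Psi_1^{[2]})$, while $b(k)=e^{-2\I(kx+2k^2 t)}\det(\Psi_2^{[1]},\Psi_1^{[1]})$.

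Item (1) then follows immediately from the analytic continuation of $\Psi_1^{[1]},\Psi_2^{[2]}$ into $\mathbb C^+$ and of $\Psi_2^{[1]},\Psi_1^{[2]}$ into $\mathbb C^-$ noted in the excerpt; continuity up to $\mathbb R$ follows by dominated convergence in \eqref{Psi} using $q(\cdot,t)\in L^1(\mathbb R)$. For item (2), I iterate \eqref{Psi} to get a Neumann expansion $\Psi_j(x,t,k)=I+k^{-1}\Psi_j^{(1)}(x,t)+O(k^{-2})$ as $k\to\infty$ in the relevant closed half-plane, obtained by a single integration by parts in $y$ against the kernel $e^{\I k(y-x)}$, which decays in that sector. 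Plugging these expansions into the determinantal formulas gives $a_j=1+O(k^{-1})$ and $b=O(k^{-1})$ directly.

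For item (3), I push \eqref{phi-sym} through \eqref{S-def}. Starting from $\Phi_1(-x,t,-k)=\Phi_2(-x,t,-k)S(-k)$, conjugating and conjugating by $\Lambda$, and using \eqref{phi-sym} on both sides, one obtains, for $k\in\mathbb R$,
\[
\Phi_2(x,t,k)=\Phi_1(x,t,k)\,\Lambda\overline{S(-k)}\Lambda^{-1},
\]
which compared with $\Phi_2=\Phi_1 S^{-1}$ yields $\Lambda\overline{S(-k)}\Lambda^{-1}=S^{-1}(k)$. Writing out this $2\times 2$ identity with the structure \eqref{S(k)} and comparing $(1,1)$ and $(2,2)$ entries gives $\overline{a_j(-k)}=a_j(k)$ on $\mathbb R$; by the identity theorem applied to the holomorphic functions $k\mapsto a_j(k)$ and $k\mapsto\overline{a_j(-\bar k)}$ in the respective half-planes (already known analytic from item (1)), this extends to $k\in\overline{\mathbb C^+}$ for $a_1$ and $k\in\overline{\mathbb C^-}$ for $a_2$. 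Item (4) is then immediate: the identity $\det\Phi_j\equiv 1$ together with \eqref{S-def} forces $\det S(k)=1$, and expanding the determinant of \eqref{S(k)} yields the stated relation.

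The one nontrivial technical point is the uniformity of the large-$k$ asymptotics in item (2): the Neumann/integration-by-parts argument for \eqref{Psi} must be carried out for complex $k$ in the appropriate closed half-plane rather than just on $\mathbb R$, and the $O(k^{-1})$ bounds must be uniform in $(x,t)$ on compacts, so that the determinantal identities transfer the estimates cleanly to $a_1$, $a_2$, and $b$. With $q(\cdot,t)\in L^1(\mathbb R)$ in force this is standard, but it is the only step where explicit kernel estimates are truly needed.
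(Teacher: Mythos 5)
Your proposal follows essentially the same route as the paper: the determinant representations \eqref{det-rel} obtained from \eqref{S-def} and $\det\Phi_j\equiv1$, the analyticity and large-$k$ behaviour of the columns of $\Psi_j$ from the Volterra equations \eqref{Psi} for Items 1--2, the symmetry \eqref{phi-sym} pushed through \eqref{S-def} for Item 3, and $\det S(k)\equiv1$ for Item 4. The only cosmetic difference is the last step of Item 3: invoking the identity theorem for agreement only on the boundary $\mathbb{R}$ is not quite enough as stated (one needs a Schwarz-reflection/boundary-uniqueness argument), whereas the paper's column symmetries \eqref{phi-sym-b}--\eqref{phi-sym-c} already hold for complex $k$ and, inserted into \eqref{det-rel}, give $\overline{a_j(-\bar k)}=a_j(k)$ directly throughout the closed half-planes.
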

\begin{proof}
	Item 1.
	Using the Cramer's rule for the system \eqref{S-def} we conclude that $a_1$, $a_2$ and $b$ can be found in terms of the following determinant relations, where we can take any $x,t\in\mathbb{R}$:
	\begin{subequations}\label{det-rel}
		\begin{align}
		\label{a1-det}
		&a_1(k)=\det(\Phi_1^{[1]}(x,t,k), \Phi_2^{[2]}(x,t,k)),&&
		k\in\overline{\mathbb{C}^+},\\
		&a_2(k)=\det(\Phi_2^{[1]}(x,t,k), \Phi_1^{[2]}(x,t,k)),&&
		k\in\overline{\mathbb{C}^-},\\
		&b(k)=\det(\Phi_2^{[1]}(x,t,k),\Phi_1^{[1]}(x,t,k)),&&
		k\in\mathbb{R}.
		\end{align}
	\end{subequations}
	% The latter implies the analytical properties of $a_j$.
	Item 2. Follows from \eqref{det-rel} and $\Phi_j(0,0,k)\to I$, $k\to\infty$ for $q_0(x)\in L^1(\mathbb{R})$, which, in turn, follows from \eqref{Psi}.\\
	Item 3. Follows from \eqref{S-def} and \eqref{phi-sym}.\\
	Item 4. Follows from $\det S(k)=1$ and \eqref{S(k)}.
\end{proof}
\begin{remark}
	From \eqref{det-rel} it follows that the spectral functions can be found in terms of the known initial data $q_0(x)=q(x,0)$ only.
\end{remark}
\begin{remark}
	$a_1$, $a_2$ and $b$ can  also be found as the large $x$ limits of the solutions of the Volterra equations (see, e.g., Section 1.3 in \cite{AS} or \cite{RS19} for details):
	\begin{equation}
	\label{ab-lim}
	\begin{split}
	a_{1}(k)&=\lim\limits_{x\rightarrow+\infty}\psi_1(x,t,k),\quad
	a_{2}(k)=\lim\limits_{x\rightarrow+\infty}\psi_4(x,t,k),\\
	b(k)&=\lim\limits_{x\rightarrow+\infty}
	e^{-2\I kx-4\I k^2t}\psi_3(x,t,k),
	\end{split}
	\end{equation}
	where $\{\psi_1, \psi_2, \psi_3, \psi_4\}$ solves the following integral equations (notice that\break $\Psi_1(x,t,k)\equiv
	\left(
	\begin{smallmatrix}
	\psi_1& \psi_2\\
	\psi_3& \psi_4
	\end{smallmatrix}
	\right)
	$):
	\begin{equation}\label{psi-1-3}
	\begin{cases}
	\psi_{1}(x,t,k)=1+\int_{-\infty}^{x}q(y,t)\psi_3(y,t,k)\,dy,\\
	\psi_3(x,t,k)=-\sigma\int_{-\infty}^{x}e^{2\I k(x-y)}\overline{q(-y,t)}\psi_1(y,t,k)\,dy,
	\end{cases}
	\end{equation}
	and
	\begin{equation}
	\label{psi24}
	\begin{cases}
	\psi_2(x,t,k)=\int_{-\infty}^{x}e^{2\I k(y-x)}q(y,t)\psi_4(y,t,k)\,dy,\\
	\psi_4(x,t,k)=1-\sigma\int_{-\infty}^{x}
	\overline{q(-y,t)}\psi_2(y,t,k)\,dy.
	\end{cases}
	\end{equation}
\end{remark}
An important property of the spectral functions $a_1(k)$ and $a_2(k)$ is that they can have  zeros in the corresponding closed complex half plane.
These zeros correspond to solitons, which can have isolated, finite time blow-up points.
Recall that in the case of the conventional defocusing NLS equation, there are no discrete spectrum (i.e.,\,\,no zeros) and, consequently, there are no solitons.
However, for the NNLS equation, the situation is different.
Indeed, consider an odd initial data for the defocusing NNLS equation. Then problem \eqref{iv-nnls} reduces to  the Cauchy problem for the conventional focusing  NLS equation, which can have discrete spectrum and therefore supports solitons.
But for the defocusing NNLS equation there are still restrictions on the zeros of $a_1$ and $a_2$ (cf.\,\,Section 9 in \cite{AMN}):
\begin{proposition}\label{defoc-zeros}
	If $\sigma=-1$, the spectral functions $a_1(k)$	and $a_2(k)$ do not have purely imaginary zeros.
	\begin{proof}
		We give the proof for $a_1(k)$, the case of $a_2(k)$ is similar.
		First, in view of the determinant relation (see Proposition \ref{a_j,b}, Item 4), $a_1(0)\neq0$.
		Suppose that $a_1(\I\rho)=0$, $\rho>0$.
		Then \eqref{a1-det} implies that there exists $\gamma\in\mathbb{C}$ such that $\Phi_1^{[1]}(x,t,\I\rho)=\gamma\Phi_2^{[2]}(x,t,\I\rho)$.
		Notice that since $\Phi_1^{[1]}$ and $\Phi_2^{[2]}$ are nonzero solutions of \eqref{Lax}, $\gamma_x=\gamma_t=0$ for all $x,t\in\mathbb{R}$.
		Moreover, from the symmetry relations \eqref{phi-sym-b}, \eqref{phi-sym-c} we conclude that
		$$
		\left(\Phi_{1}\right)_{11}(x,t,\I\rho)=
		\gamma\sigma\overline{\left(\Phi_{1}\right)_{21}(-x,t,\I\rho)},\quad
		\left(\Phi_{1}\right)_{21}(x,t,\I\rho)=
		\gamma\overline{\left(\Phi_{1}\right)_{11}(-x,t,\I\rho)}.
		$$
		The latter implies that $\sigma|\gamma|^2=1$, which is inconsistent with $\sigma=-1$.
	\end{proof}
\end{proposition}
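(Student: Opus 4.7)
The plan is to argue by contradiction and to exploit the rigid $\Lambda$-symmetry \eqref{phi-sym} of the Jost solutions.

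Suppose $a_1(\I\rho)=0$ for some $\rho>0$. By the determinant representation \eqref{a1-det}, the columns $\Phi_1^{[1]}(x,t,\I\rho)$ and $\Phi_2^{[2]}(x,t,\I\rho)$ are linearly dependent. Since both are nonzero solutions of the same Lax system \eqref{Lax} with fixed $k=\I\rho$, their proportionality constant $\gamma\in\mathbb{C}$ is independent of $x$ and $t$, so $\Phi_1^{[1]}(x,t,\I\rho)=\gamma\,\Phi_2^{[2]}(x,t,\I\rho)$. Next, I would feed in the symmetry \eqref{phi-sym-b}: at $k=\I\rho$ one has $-\bar k=\I\rho$, hence $\Phi_2^{[2]}(x,t,\I\rho)=\Lambda\,\overline{\Phi_1^{[1]}(-x,t,\I\rho)}$. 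Reading off the two scalar components produces a pair of identities coupling $(\Phi_1)_{11}(x,t,\I\rho)$ and $(\Phi_1)_{21}(x,t,\I\rho)$ to the complex conjugates of their values at $-x$.

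The next step is to close the identities on $\Phi_1^{[1]}$: swapping $x\leftrightarrow -x$ in one identity, conjugating, and substituting into the other eliminates the $-x$ dependence and produces $(\sigma|\gamma|^2-1)\,(\Phi_1)_{11}(x,t,\I\rho)\equiv 0$. Since $\Phi_1^{[1]}(\cdot,\cdot,\I\rho)$ is nontrivial (its determinant with $\Phi_1^{[2]}$ equals $1$, forcing at least one component to be nonzero somewhere), the only option is $\sigma|\gamma|^2=1$, which is incompatible with $\sigma=-1$. The remaining point $k=0$ is disposed of separately: Item~4 of Proposition~\ref{a_j,b} evaluated at $k=0$ reads $a_1(0)a_2(0)+\sigma|b(0)|^2=1$, so $a_1(0)=0$ together with $\sigma=-1$ would force $|b(0)|^2=-1$, an immediate contradiction. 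The argument for $a_2$ is structurally identical, with \eqref{phi-sym-c} replacing \eqref{phi-sym-b} and a hypothetical zero at $k=-\I\rho$ in $\overline{\mathbb{C}^-}$.

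I do not anticipate any serious obstacle. The only care required is notational: the $\Lambda$-twist and the complex conjugation must be tracked cleanly when iterating the symmetry to eliminate $-x$, and one must briefly justify that $\gamma$ is constant in both $x$ and $t$ (using that both columns satisfy the same first-order linear system in $x$ and in $t$). Beyond that the argument is purely algebraic and relies only on the rigid \textit{PT}-type symmetry of the Lax pair, with no analytic estimate required.
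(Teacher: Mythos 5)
Your proposal is correct and follows essentially the same route as the paper's proof: contradiction via the proportionality $\Phi_1^{[1]}(x,t,\I\rho)=\gamma\,\Phi_2^{[2]}(x,t,\I\rho)$ at a hypothetical zero, the $\Lambda$-symmetry \eqref{phi-sym-b} (resp.\ \eqref{phi-sym-c} for $a_2$) iterated to force $\sigma|\gamma|^2=1$, and the determinant relation to exclude $k=0$. One tiny remark: your parenthetical justification that $\Phi_1^{[1]}(\cdot,\cdot,\I\rho)$ is nontrivial via its determinant with $\Phi_1^{[2]}$ does not literally apply at $k=\I\rho\in\mathbb{C}^+$ (where $\Phi_1^{[2]}$ is not defined), but nontriviality follows at once from the normalization $\Phi_1^{[1]}e^{\I kx+2\I k^2t}\to(1,0)^T$ as $x\to-\infty$, which is what the paper implicitly uses.
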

In this paper we make the following assumption on zeros of $a_j(k)$, $j=1,2$:

\noindent\textbf{Assumption 1.} (Zeros of $a_1$ and $a_2$). \emph{We assume that neither $a_1(k)$ nor $a_2(k)$ has zeros on the real axis (i.e., there are no spectral singularities).
	Moreover, we suppose that both $a_1(k)$ and $a_2(k)$ can have only finite number of simple zeros in $\mathbb{C}^+$ and $\mathbb{C}^-$ respectively.}
\begin{remark}
	In the case of the conventional focusing NLS equation, the spectral functions corresponding to ``generic'' initial data (i.e., data that belong to an open dense set in a certain functional space) satisfy Assumption 1.
	On the other hand, there exists Schwartz initial data  such that  the corresponding spectral functions have infinitely many  non-real zeros with accumulation  points on the real line \cite{Z89}.
	Concerning the nonlocal case, it is an open problem whether the initial data which satisfy Assumption 1 forms a dense set in a reasonable topology.
\end{remark}

%\medskip

Another distinctive feature of the NNLS equation compared to  its conventional counterpart is that $a_1(k)$ and $a_2(k)$ are not related and therefore the number of zeros in $\mathbb{C}^+$ and $\mathbb{C}^-$ can be different.
Indeed, Item 3 of Proposition \ref{a_j,b}  implies that  the zeros are symmetric w.r.t.\,\,the imaginary axes (but not w.r.t.\,\,the real axis, as for the local NLS equation \cite{ZS72}).
In order to be able to  deal with ``pairs'' of zeros in different complex half-planes, in addition to Assumption 1 we apply the following restriction on zeros of $a_j(k)$, $j=1,2$:

\noindent\textbf{Assumption 2.} \emph{We assume that the number of zeros of  $a_1(k)$ in $\mathbb{C}^+$ is equal to the number of zeros of $a_2(k)$  in  $\mathbb{C}^-$
	and that all these zeros are simple.}
%\medskip

Notice that the number of zeros is related to the winding number of the  function $a_1(k)a_2(k)$:
\begin{proposition}\label{wn}
	Suppose that $a_j(k)$, $j=1,2$ satisfy Assumption 1. Assume that $a_1(k)$ has $L_1$ zeros in $\mathbb{C}^+$ and $a_2(k)$ has $L_2$ zeros in $\mathbb{C}^-$.
	Then $\int_{-\infty}^{+\infty}d\arg a_1(k)a_2(k)\break =2\pi(L_1-L_2)$.
\end{proposition}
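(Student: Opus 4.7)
The plan is to compute each of $\int_{-\infty}^{+\infty} d\arg a_1(k)$ and $\int_{-\infty}^{+\infty} d\arg a_2(k)$ separately by applying the argument principle to $a_1$ in the upper half-plane and to $a_2$ in the lower half-plane, then add the results. The large-$k$ normalization $a_j(k) = 1 + \ord(k^{-1})$ from Item 2 of Proposition \ref{a_j,b} is the key ingredient that lets me discard the contributions from the infinite semicircular arcs.

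More concretely, I would fix $R>0$ so large that all $N_1$ zeros of $a_1$ in $\mathbb{C}^+$ lie in $\{|k|<R\}$ and consider the positively oriented boundary $\gamma_R^+$ of the upper half-disk, namely the segment $[-R,R]$ (traversed left to right) together with the upper semicircle $C_R^+$ (traversed from $R$ back to $-R$). Assumption 1 ensures that $a_1$ has no zeros on $\gamma_R^+$ and that the $N_1$ interior zeros are simple, so the argument principle yields $\oint_{\gamma_R^+} d\arg a_1(k) = 2\pi N_1$. By Item 2 of Proposition \ref{a_j,b}, on $C_R^+$ the image $a_1(C_R^+)$ lies in a small neighborhood of $1$ that avoids the origin for $R$ large, so the arc contribution tends to $0$ as $R\to\infty$; this gives $\int_{-\infty}^{+\infty} d\arg a_1(k) = 2\pi N_1$. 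Repeating the same argument for $a_2$ on the lower half-disk, and noting that its positive orientation traverses the real axis from $+R$ to $-R$, I get $\int_{-\infty}^{+\infty} d\arg a_2(k) = -2\pi N_2$. Summing the two identities produces the stated formula.

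The only technical subtlety is that $a_j$ is a priori only continuous up to the real axis, not analytic across it, so the argument principle is not immediately applicable to $\gamma_R^\pm$ as stated. I would handle this in a standard way by deforming the real segment slightly into the open half-plane where $a_j$ is analytic; Assumption 1 guarantees that $a_j$ does not vanish on $\mathbb{R}$ and hence stays bounded away from zero in a thin strip along it, so the winding number is unchanged under the deformation, and continuity of $a_j$ on $\overline{\mathbb{C}^\pm}$ lets me pass to the limit. I do not anticipate this to be the main difficulty --- the substantive content is the zero count via the argument principle combined with the $1+\ord(k^{-1})$ normalization at infinity.
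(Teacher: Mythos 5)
Your proof is correct: the argument-principle computation on the two half-disks, with the $1+\ord(k^{-1})$ normalization of $a_j$ suppressing the semicircular contributions, the orientation of the lower half-disk producing the minus sign in front of $N_2$, and the small deformation off the real axis to handle the fact that $a_j$ is only continuous on $\overline{\mathbb{C}^\pm}$ (legitimate since Assumption 1 keeps $a_j$ bounded away from zero near $\mathbb{R}$), is exactly the standard index argument. The paper gives no proof of its own --- it only cites Section 12.2 of Gakhov's book --- so your proposal supplies precisely the details that citation stands for.
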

\begin{proof}
	See, e.g., Section 12.2 in \cite{Gah66}.
\end{proof}
\begin{remark}\label{BY}
	In view of Proposition \ref{wn}, Assumption 2 is equivalent to the
	Bar-Yaacov's winding number constraint \cite{BY85}:
	$$
	\int_{-\infty}^{+\infty}d\arg a_1(k)a_2(k)=0.
	$$
\end{remark}
Let us mark the
zeros of $a_j(k)$, $j=1,2$ by
(here we adopt the convention $\{(\cdot)_s\}_{s=1}^0=\emptyset$)
%From Assumptions 1 and 2 and Proposition \ref{wn} it follows that $a_1(k)$ and $a_2(k)$ can have only equal number of simple zeros in $\mathbb{C}^+$ and $\mathbb{C}^-$ respectively, which are symmetric w.r.t.\,\,imaginary axis (here we adopt notation $\{(\cdot)_s\}_{s=1}^0=\emptyset$):
\begin{equation}\label{z-a}
\{\I\rho_{j,s}\}_{s=1}^{M_j}\cup\{\zeta_{j,s}, -\bar{\zeta}_{j,s}\}_{s=1}^{N_j},\quad
M_j,N_j\in\mathbb{N}\cup\{0\},
\end{equation}
where $(-1)^{j+1}\rho_{j,s}>0$ and $\Re\zeta_{j,s}<0$ (notice that $M_j=0$ for $\sigma=-1$, see Proposition \ref{defoc-zeros}).
Notice that in view of Assumption 2 we have
\begin{equation}\label{num_zer}
2 N_1+M_1=2 N_2+M_2=:L.
\end{equation}
Denote by
\begin{equation}\label{n-c}
\quad\{\gamma_{j,s}\}_{s=1}^{M_j}\cup\{\eta_{j,s}, \hat\eta_{j,s}\}_{s=1}^{N_j},\quad
M_j,N_j\in\mathbb{N}\cup\{0\},\,\,j=1,2,
\end{equation}
the associated norming constants,
which are defined as follows (see \eqref{det-rel}):
\begin{subequations}\label{norm-const}
	\begin{align}
	&\Phi_1^{[1]}(x,t,\I\rho_{1,s})
	=\gamma_{1,s}\Phi_2^{[2]}(x,t,\I\rho_{1,s}),
	\,\,s=1,\dots,M_1,\\
	&\Phi_2^{[1]}(x,t,\I \rho_{2,s})
	=\gamma_{2,s}\Phi_1^{[2]}(x,t,\I\rho_{2,s}),\,\,
	s=1,\dots,M_2,
	\end{align}
	and
	\begin{align}
	\nonumber
	&\Phi_1^{[1]}(x,t,\zeta_{1,s})
	=\eta_{1,s}\Phi_2^{[2]}(x,t,\zeta_{1,s}),&&
	\Phi_1^{[1]}(x,t,-\bar\zeta_{1,s})
	=\hat\eta_{1,s}\Phi_2^{[2]}(x,t,-\bar\zeta_{1,s}),\\
	& &&
	%\qquad\qquad
	%\qquad\qquad
	s=1,\dots,N_1,\\
	\nonumber
	&\Phi_2^{[1]}(x,t,\zeta_{2,s})
	=\eta_{2,s}\Phi_1^{[2]}(x,t,\zeta_{2,s}),&&
	\Phi_2^{[1]}(x,t,-\bar\zeta_{2,s})
	=\hat\eta_{2,s}\Phi_1^{[2]}(x,t,-\bar\zeta_{2,s}),\\
	& &&
	s=1,\dots,N_2.
	\end{align}
\end{subequations}
Since the columns $\Phi_{i}^{[j]}$, $i,j=1,2$ are nonzero solutions of the Lax pair \eqref{Lax} for all $x,t\in\mathbb{R}$, the norming constants do not depend on $x,t$.
From the symmetries \eqref{phi-sym-b} and \eqref{phi-sym-c} it follows that $|\gamma_{j,s}|=1$, $s=1,\dots,M_j$, $j=1,2$
and
$\hat\eta_{j,s}=\sigma\bar{\eta}_{j,s}^{-1}$, $s=1,\dots,N_j$, $j=1,2$.
\begin{remark}
	Setting $t=0$ in \eqref{det-rel} and \eqref{norm-const},  all the spectral functions and norming constants
	can be determined in terms of the  initial data only.
\end{remark}

Finally, let us recall the formal derivation  of the conservation laws for the Cauchy problem \eqref{iv-nnls} \cite{AMP, AMN}.
This will be useful in the sequel, in the  discussion of the conserved quantities of solutions  continued after blow-up  (see Remark \ref{concl-th-1}, Item (i) and Remark \ref{concl-th-2}, Item (i) below).

Let $\psi_1(x,t,k)=e^{\phi_1(x,t,k)}$ and $p(x,t,k)=\partial_x\phi_1(x,t,k)$.
Then from \eqref{psi-1-3} one concludes that $p$ satisfies the following Riccati equation
\begin{equation}\label{ricc}
q(x,t)\partial_x\left(\frac{p(x,t,k)}{q(x,t)}\right)
+p^2(x,t,k)-2\I kp(x,t,k)+\sigma q(x,t)\overline{q(-x,t)}=0.
\end{equation}
Notice that $\phi_1(x,t,k)\to 0$ as $k\to\infty$, $k\in\mathbb{C}^+$.
Substituting the expansion
\begin{equation}\label{p-n}
p(x,t,k)=\sum\limits_{n=1}^{\infty}
\frac{p_n(x,t)}{(2\I k)^n},
\end{equation}
into \eqref{ricc} we can find all $p_n$ recursively.
Particularly, the first three quantities have the form
\begin{equation}
\begin{split}
&p_1=\sigma q(x,t)\overline{q(-x,t)},\quad
p_2=\sigma q(x,t)
\partial_x\overline{q(-x,t)},\\
&p_3=q^2(x,t)\overline{q^2(-x,t)}
+\sigma q(x,t)\partial_{xx}\overline{q(-x,t)}.
\end{split}
\end{equation}
Taking into account that $\lim\limits_{x\to+\infty}\phi_1(x,t,k)=\ln a_1(k)$ and
$\lim\limits_{x\to-\infty}\phi_1(x,t,k)=0$ for $k\in\mathbb{C}^+$, we have
\begin{equation}\label{In-a1}
\ln a_1(k)=\sum\limits_{n=1}^{\infty}
\frac{I_n(t)}{(2\I k)^n},
\end{equation}
where $I_n(t)=\int_{-\infty}^{+\infty}p_n(x,t)\,dx$.
From (\ref{In-a1}) it follows that
$\{I_n\}_1^\infty$ do not depend on $t$
and thus constitute an infinite set of conservation laws for the NNLS equation with decaying boundary conditions.
In particular, the conservation laws of ``mass'' ($I_1$) and ``energy'' ($I_3$) have the form
\begin{align}
\label{cons-mass}
&\tilde M[q]=\int_{-\infty}^{+\infty}
q(x,t)\overline{q(-x,t)}\,dx=const,\\
\label{cons-energy}
&\tilde E[q]=\int_{-\infty}^{+\infty}
\left(\partial_xq(x,t)\partial_x\overline{q(-x,t)}
-\sigma q^2(x,t)\overline{q^2(-x,t)}\right)\,dx=const.
\end{align}
\subsection{Inverse scattering}\label{sec2.2}
We have defined the spectral data in terms of the given initial data $q_0(x)$.
This procedure involves solving a finite number of linear problems only.
Now we want to recover, at least formally, the solution $q(x,t)$ for all $t\in\mathbb{R}$ from the (known) spectral data.
Define a matrix-valued function $M$ as follows:
\begin{equation}
\label{M}
M(x,t,k)=\begin{cases}
\left(\frac{\Psi_1^{[1]}(x,t,k)}{a_{1}(k)},\Psi_2^{[2]}(x,t,k)\right), & \Im k\geq 0, \\
\left(\Psi_2^{[1]}(x,t,k),\frac{\Psi_1^{[2]}(x,t,k)}{a_{2}(k)}\right), & \Im k\leq 0.
\end{cases}
\end{equation}
Then the scattering relation \eqref{S-def} together with $\Phi_j(x,t,k)=\Psi_j(x,t,k)e^{(-\I kx-2\I k^{2}t)\sigma_{3}}$ imply the following jump condition for $M(\cdot,\cdot,k)$
across $k\in {\mathbb R}$ (the real line is oriented from left to right):
\begin{equation}
\label{jump-M}
M_{+}(x,t,k)=M_{-}(x,t,k)J(x,t,k),\quad k\in {\mathbb R},
\end{equation}
where
$M_\pm$ denotes the limiting values of $M$ as $k$ approaches $\mathbb R$ from the ``$+$'' (left) or the  ``$-$'' (right) side and
\begin{equation}
\label{ist4.3}
J(x,t,k)=
\begin{pmatrix}
1+\sigma r_{1}(k)r_{2}(k)& \sigma r_{2}(k)e^{-2\I kx-4\I k^2t}\\
r_1(k)e^{2\I kx+4\I k^2t}& 1
\end{pmatrix},\quad k\in {\mathbb R},
\end{equation}
with the reflection coefficients $r_j$, $j=1,2$ defined as follows:
\begin{equation}\label{r-12}
r_1(k)=\frac{b(k)}{a_1(k)},\quad
r_2(k)=\frac{\overline{b(-{k})}}{a_2(k)},\quad k\in\mathbb{R}.
\end{equation}
Additionally,
$M(x,t,k)$ satisfies
the normalization condition at infinity:
\begin{equation}
\label{norm}
M(x,t,k)\rightarrow I, \quad k\rightarrow\infty,
\end{equation}
and the residue conditions at zeros of $a_j(k)$, $j=1,2$, if any (see \eqref{z-a} -- \eqref{n-c} and recall that $\hat\eta_{j,s}=\sigma\bar\eta_{j,s}^{-1}$):
\begin{itemize}
	\item in the upper half plane,
	\begin{subequations}\label{res-upper}
		\begin{align}
		&\underset{k=\I \rho_{1,s}}{\operatorname{Res}}
		M^{[1]}(x,t,k)=\frac{\gamma_{1,s}}{\dot{a}_1(\I\rho_{1,s})}
		e^{-2\rho_{1,s}x-4\I\rho_{1,s}^2t}
		M^{[2]}(x,t,\I\rho_{1,s}),\,\,s=1,\dots,M_1,\\
		&\underset{k=\zeta_{1,s}}{\operatorname{Res}}
		M^{[1]}(x,t,k)=\frac{\eta_{1,s}}{\dot{a}_1(\zeta_{1,s})}
		e^{2\I\zeta_{1,s}x+4\I\zeta_{1,s}^2t}
		M^{[2]}(x,t,\zeta_{1,s}),
		\quad \,\,\,\,\,s=1,\dots,N_1,\\
		&\underset{k=-\bar\zeta_{1,s}}{\operatorname{Res}}
		M^{[1]}(x,t,k)=
		\frac{\sigma e^{-2\I\bar\zeta_{1,s}x+4\I\bar\zeta_{1,s}^2t}}
		{\bar\eta_{1,s}\dot{a}_1(-\bar\zeta_{1,s})}
		%e^{-2\I\bar\zeta_{1,s}x+4\I\bar\zeta_{1,s}^2t}
		M^{[2]}(x,t,-\bar\zeta_{1,s}),
		\qquad \,\,\,s=1,\dots,N_1,
		\end{align}
	\end{subequations}
	\item in the lower half plane,
	\begin{subequations}\label{res-lower}
		\begin{align}
		&\underset{k=\I \rho_{2,s}}{\operatorname{Res}}
		M^{[2]}(x,t,k)=\frac{\gamma_{2,s}}{\dot{a}_2(\I\rho_{2,s})}
		e^{2\rho_{2,s}x+4\I\rho_{2,s}^2t}
		M^{[1]}(x,t,\I\rho_{2,s}),\,\,\,\,\,s=1,\dots,M_2,\\
		&\underset{k=\zeta_{2,s}}{\operatorname{Res}}
		M^{[2]}(x,t,k)=\frac{\eta_{2,s}}{\dot{a}_2(\zeta_{2,s})}
		e^{-2\I\zeta_{2,s}x-4\I\zeta_{2,s}^2t}
		M^{[1]}(x,t,\zeta_{2,s}),\quad s=1,\dots,N_2,\\
		&\underset{k=-\bar\zeta_{2,s}}{\operatorname{Res}}
		M^{[2]}(x,t,k)=
		\frac{\sigma e^{2\I\bar\zeta_{2,s}x-4\I\bar\zeta_{2,s}^2t}}
		{\bar\eta_{2,s}\dot{a}_2(-\bar\zeta_{2,s})}
		%e^{2\I\bar\zeta_{2,s}x-4\I\bar\zeta_{2,s}^2t}
		M^{[1]}(x,t,-\bar\zeta_{2,s}),
		\qquad \,\,\,\,\, s=1,\dots,N_2.
		\end{align}
	\end{subequations}
\end{itemize}
Moreover, $M$ satisfies the following symmetry relation, which can be derived from the symmetries of $a_j$ described in Item 3 in Proposition \ref{a_j,b} (see Lemma 1 and Remark 3 in \cite{RS19} for details)
\begin{equation}\label{M-sym}
M(x,t,k)=\begin{cases}
\Lambda \overline{M(-x,t,-\bar k)} \Lambda^{-1}
\begin{pmatrix}
\frac{1}{a_1(k)} & 0 \\ 0 & a_1(k)
\end{pmatrix}, & k\in\overline{\mathbb{C}^+}, \\
\Lambda \overline{M(-x,t,-\bar k)} \Lambda^{-1}
\begin{pmatrix}
a_2(k) & 0 \\ 0 & \frac{1}{a_2(k)}
\end{pmatrix}, & k\in\overline{\mathbb{C}^-}.
\end{cases}
\end{equation}
Particularly, \eqref{M-sym} implies that
\begin{equation}\label{M-sym-as}
M(x,t,k)=\Lambda\overline{M(-x,t,-\bar k)}\Lambda^{-1}(I+\text{daig}\{\ord(k^{-1}),\ord(k^{-1})\}),\quad k\to\infty,
\end{equation}
where $\text{diag}\{a,b\}=
\left(
\begin{smallmatrix}
a&0\\
0& b
\end{smallmatrix}
\right)
$.

Given $M(x,t,k)$,
the solution
$q(x,t)$ of the Cauchy problem (\ref{iv-nnls})
can be expressed in terms of $M(x,t,k)$ as follows:
\begin{equation}
\label{q(x,t)}
q(x,t)=2\I\lim_{k\rightarrow\infty}k\, M_{12}(x,t,k),
\end{equation}
or, in view of \eqref{M-sym-as},
\begin{equation}
\label{q(-x,t)}
q(x,t)=-2\I\sigma\lim_{k\rightarrow\infty} \overline{k\,M_{21}(-x,t,k)}.
\end{equation}

The relations above can be interpreted as a  procedure
of reconstructing $q(x,t)$
in terms of the solution of
an associated RH problem
with data
determined by $q(x,0)$.
This RH problem is referred to as \textit{basic} and is formulated as follows:

%\medskip

\textbf{Basic RH problem:} find piecewise meromorphic $2\times2$ valued function $M(x,t,k)$ which satisfies the jump condition \eqref{jump-M}, the residue conditions \eqref{res-upper} and \eqref{res-lower}, the normalization condition \eqref{norm} and the symmetry \eqref{M-sym}.
Here the spectral functions $a_j(k)$, $r_j(k)$ and the norming constants $\gamma_{j,s}$, $\nu_{j,s}$ are determined by $q_0(x)$ through \eqref{det-rel} and \eqref{norm-const} with $t=0$ and \eqref{r-12}.
%The norming constants $\gamma_{j,s}$ $j=1,2$, $s=1,\dots,M$ and $\nu_{j,s}$, $j=1,2$, $s=1,\dots,M$ also can be calculated in terms of $q_0(x)$ by \eqref{norm-const} with $t=0$.
%\medskip
\begin{remark}
	The spectral functions $a_j$ can be expressed in terms of their zeros and the reflection coefficients $r_j$ via the  trace formulas (cf.\,\,Section 8 in \cite{AMN}):
	\begin{subequations}
		\begin{align}
		\nonumber
		a_1(k)=&\prod\limits_{s=1}^{M_1}
		\frac{k-\I\rho_{1,s}}{k-\I\rho_{2,s}}
		\prod\limits_{s=1}^{N_1}
		\frac{(k-\zeta_{1,s})(k+\bar\zeta_{1,s})}
		{(k-\zeta_{2,s})(k+\bar\zeta_{2,s})}\\
		&\times
		\exp\left\{
		\frac{-1}{2\pi\I}\int_{-\infty}^{+\infty}
		\frac{1+\sigma r_1(\zeta)r_2(\zeta)}{\zeta-k}\,d\zeta
		\right\},
		\quad k\in\overline{\mathbb{C}^+},\\
		\nonumber
		a_2(k)=&\prod\limits_{s=1}^{M_2}
		\frac{k-\I\rho_{2,s}}{k-\I\rho_{1,s}}
		\prod\limits_{s=1}^{N_2}
		\frac{(k-\zeta_{2,s})(k+\bar\zeta_{2,s})}
		{(k-\zeta_{1,s})(k+\bar\zeta_{1,s})}\\
		&\times\exp\left\{
		\frac{1}{2\pi\I}\int_{-\infty}^{+\infty}
		\frac{1+\sigma r_1(\zeta)r_2(\zeta)}{\zeta-k}\,d\zeta
		\right\},
		\quad k\in\overline{\mathbb{C}^-}.
		\end{align}
	\end{subequations}	
\end{remark}
Since $\det J(x,t,k)=1$ and $\det M(x,t,k)\to1$ as $k\to\infty$, it follows, by the Liouville theorem,
that
$\det M(x,t,k)=1$ for all $x,t\in\mathbb{R}$, $k\in\mathbb{C}$.
Moreover, the solution of the basic RH problem is unique, if it exists.
Indeed, if $M$ and $\hat M$ are two solutions, then $M\hat M^{-1}$ has neither jump across $\mathbb{R}$ nor singularities.
Since $M\hat M^{-1}\to I$ as $k\to\infty$,
we have (again
by the Liouville theorem) that $M\hat M^{-1}=I$.

To sum up, we have constructed the direct scattering map $\mathcal{R}$
\begin{equation}
\mathcal{R}(q_0(\cdot))=\{r_j(\cdot);
\mathcal{Z},\mathcal{N}(0)\},
\end{equation}
and the  map $\mathcal{I}$
\begin{equation}
\mathcal{I}(r_j(\cdot)e^{(-1)^{j+1}4\I(\cdot)^2t};
\mathcal{Z}, \mathcal{N}(t))= q(\cdot,t),
\end{equation}
where $\mathcal{Z}=\bigcup\limits_{j=1}^{2}
\{\I\rho_{j,s}\}_{s=1}^{M_j}
\cup\{\zeta_{j,s}, -\bar{\zeta}_{j,s}\}_{s=1}^{N_j}$ and
$\mathcal{N}(t)=\bigcup\limits_{j=1}^{2}
\{\gamma_{j,s}e^{(-1)^j4\I\rho_{j,s}^2t}\}_{s=1}^{M_j}
\cup\{\eta_{j,s}e^{(-1)^{j+1}4\I\zeta_{j,s}^2t}\}_{s=1}^{N_j}$,
which, due to the uniqueness of the solution of the RH problem, can be considered as a formal inverse to
$\mathcal{R}$.
%after specifying appropriate function spaces where $\mathcal{R}$  and $\mathcal{I}$ act.
%\red{
%(here the maps $\mathcal{R}$ and $\mathcal{I}$ are introduced formally; the domains where they act will be discussed in Section %\ref{bl-up-H11-glob}, Lemma \ref{dir-inv} below).
%Notice that
%\begin{equation}
%\mathcal{I}\circ\mathcal{R}=\mathcal{R}\circ\mathcal{I}=\mathrm{id}.
%\end{equation}
%Indeed, let $\mathcal{R}(q_0)=\mathcal{R}(\hat q_0)$ (without loss of generality we fix $t=0$).
%Define $\Psi_j(x,0,k)$ and $\hat\Psi_j(x,0,k)$ via \eqref{Psi} with
%$U(x,0)=\left(
%\begin{smallmatrix}
%0& q_0(x)\\
%-\sigma \overline{q_0(-x)}&0
%\end{smallmatrix}
%\right)$
%and
%$\hat U(x,0)=\left(
%\begin{smallmatrix}
%0& \hat q_0(x)\\
%-\sigma \overline{\hat q_0(-x)}&0
%\end{smallmatrix}
%\right)$
%respectively and then define $M(x,0,k)$ and $\hat M(x,0,k)$ by \eqref{M}.
%From \eqref{M} it follows that (i) $q_0(x)=2\I\lim\limits_{k\to\infty}kM_{12}(x,0,k)$,
%$\hat q_0(x)=2\I\lim\limits_{k\to\infty}k\hat M_{12}(x,0,k)$ and (ii) since $q_0$ and $\hat q_0$ give rise to the same spectral data, $M(x,0,k)$ %and $\hat M(x,0,k)$ satisfy the same (basic) RH problem.
%In view of the uniqueness of the solution of this problem, $M=\hat M$ and consequently $q_0=\hat q_0$.
%Considering the spectral data in the image of $\mathcal{R}$ only, we conclude that $\mathcal{R}$ is also a surjection with %$\mathcal{R}^{-1}=\mathcal{I}$.}
\begin{remark}
	Using the determinant relation (see Proposition \ref{a_j,b}, Item 4) and taking into account that $a_j$ do not have zeros on $\mathbb{R}$ (see Assumption 1), we have
	\begin{equation}\label{r-a}
	1+\sigma r_1(k) r_2(k)=\frac{1}{a_1(k)a_2(k)},\quad k\in {\mathbb R}.
	\end{equation}
	Therefore, the winding number constraint (see Remark \ref{BY}) in terms of $r_j$ reads as (cf.\,\,\cite{BY85, BC84, RS19, Z98})
	\begin{equation}
	\int_{-\infty}^{+\infty}d\arg(1+\sigma r_1(k)r_2(k))=0.
	\end{equation}
\end{remark}

\section{Smooth solutions with blow-up points}\label{sm-iv}
In this Section we assume that $q_0(x)\in\mathcal{S}(\mathbb{R})$ (Schwartz space functions).
Then the reflection coefficients also belong to the Schwartz space \cite{BC84, FT} (see also Proposition \ref{r-S}).

Solving the inverse problem is more involved.
Consider the basic RH problem without residue conditions (the case with zeros can be reduced to this one, see \cite{FT, Z98} and Theorems \ref{th-class-L-sol} and \ref{th-H11-L-sol} below). Let us reduce it to a singular integral equation \cite{BC84} (see also, e.g., \cite{DZ03, Z98}).
The jump matrix $J$ admits the following factorization:
\begin{equation}\label{upp-low}
J(x,t,k)=(I-w^{-}(x,t,k))^{-1}(I+w^{+}(x,t,k)),
\end{equation}
where
$w^{-}(x,t,k)=
\left(
\begin{smallmatrix}
0 & \sigma r_2(k)e^{-2\I kx-4\I k^2t}\\
0 & 0
\end{smallmatrix}
\right)
$ and
$
w^{+}(x,t,k)=
\left(
\begin{smallmatrix}
0 & 0\\
r_1(k)e^{2\I kx+4\I k^2t} & 0
\end{smallmatrix}
\right)
$.
%Then from \eqref{jump-M} we have:
%\begin{equation}
%M_+(x,t,k)(I+w^{+}(x,t,k))^{-1}=M_-(x,t,k)(I-w^{-}(x,t,k))^{-1}=:\mu(x,t,k),\quad k\in\mathbb{R}.
%\end{equation}
%The latter implies that
%\begin{equation}
%M_+(x,t,k)-M_-(x,t,k)=\mu(x,t,k)(w^+(x,t,k)+w^-(x,t,k)),\quad k\in\mathbb{R},
%\end{equation}
%and therefore by the Sokhotski-Plemelj formula we have
Then it can be shown that
\begin{equation}\label{M-int-eq}
M(x,t,k)=I+\frac{1}{2\pi\I}\int_{-\infty}^{+\infty}
\frac{\mu(x,t,\zeta)(w^+(x,t,\zeta)+w^-(x,t,\zeta))}{\zeta-k}\,d\zeta,\quad k\in\mathbb{C}\setminus\mathbb{R},
\end{equation}
where $\mu$ solves the linear singular equation
\begin{equation}\label{mu-int}
\mu(x,t,k)=I+\mathcal{C}_w(\mu(x,t,k)),\quad k\in\mathbb{R}.
\end{equation}
Here $\mathcal{C}_w(\mu )=
\mathcal{C}_+(\mu w^-)
+\mathcal{C}_-(\mu w^+)$ with
\begin{equation}
\mathcal{C}_\pm f(k)=
\lim\limits_{\varepsilon\downarrow 0}
\frac{1}{2\pi\I}\int_{-\infty}^{+\infty}
\frac{f(\zeta)}{\zeta-(k\pm\I\varepsilon)}\,d\zeta,\quad k\in\mathbb{R}.
\end{equation}
%Introducing the Cauchy operator
%\begin{equation}
%\mathcal{C}f(k)=\frac{1}{2\pi\I}\int_{-\infty}^{+\infty}
%\frac{f(\zeta)}{\zeta-k}\,d\zeta,\quad k\in\mathbb{C}\setminus\mathbb{R},
%\end{equation}
%we can write $M_+=I+\mathcal{C}_+(\mu w^++\mu w^-)$, where
%By Sokhotski-Plemelj formula $M_+=I+\mu w^++\mathcal{C}_+(\mu w^-)+\mathcal{C}_-(\mu w^+)$, which implies that $\mu$ satisfies the linear integral equation

Introducing $\mu^{\#}=\mu-I$, equation \eqref{mu-int} reduces to \eqref{int-mu} below, which can be considered as
an equation in  $L^2(\mathbb{R})$:
\begin{equation}\label{int-mu}
(I-\mathcal{C}_w)\mu^{\#}(x,t,k)=\mathcal{C}_w(I),\quad k\in\mathbb{R}.
\end{equation}
Therefore, the RH problem is solvable (in the $L^2$ sense) if there exists a bounded inverse $(I-\mathcal{C}_w)^{-1}$.
This takes place, in particular, when the jump matrix satisfies  two conditions: (i) $J(x,t,k)+J^{\dagger}(x,t,k)$ is positive definite for $k\in\mathbb{R}$ and (ii) $J(x,t,\bar{k})=J^{\dagger}(x,t,k)$ on any parts of the contour except $\mathbb{R}$, if any \cite{Z89-1} (here $J^{\dagger}$ is the Hermitian conjugate of $J$).
Notice that in our problem, the reflection coefficients $r_j$ do not satisfy  symmetries which would imply that $J(x,t,k)+J^{\dagger}(x,t,k)$ is positive definite for $k\in\mathbb{R}$ (cf., e.g., the defocusing NLS equation \cite{DZ03} or the derivative NLS equation \cite{JLPS20, LYF22, PSS17, PS18}).
Therefore, we will use another sufficient condition for solvability,
which is based on a small norm
assumption. Namely,  the operator $(I-\mathcal{C}_w)$ is invertible in $L^2$ if $\|\mathcal{C}_w\|_{L^2\to L^2}<1$.
Taking into account that $\|\mathcal{C}_\pm\|_{L^2\to L^2}=1$, we have
\begin{equation}
\|\mathcal{C}_w h\|_{L^2(\mathbb{R})}\leq\|h\|_{L^2(\mathbb{R})}
\max\{
\|w^+(x,t,\cdot)\|_{L^\infty(\mathbb{R})},
\|w^-(x,t,\cdot)\|_{L^\infty(\mathbb{R})}
\}.
\end{equation}
This implies that if $\|r_j(k)\|_{L^\infty(\mathbb{R})}<1$, $j=1,2$, then  \eqref{int-mu} has a solution in $L^2$.

%assume that some spectral functions which involve discrete spectrum have a small norm (see Item 2. in Theorems \ref{th-class-L-sol} and \ref{th-H11-L-sol}), which guarantees existence of the solution of the associated RH problem.
%The considered class of initial data defined in spectral terms in Theorems \ref{th-class-L-sol} and \ref{th-H11-L-sol}
%qualitatively can be described as sufficiently small perturbations of exact soliton initial values (cf.\,\,\cite{ZF22} where the authors consider the perturbations of zero solution).
\subsection{Classical solutions}
In this subsection we obtain the Schwartz global solution of the initial value problem \eqref{iv-nnls} with small, in a sense, initial profile (see Item 2 in Theorem \ref{th-class-glob} and Proposition \ref{prop-suff} below).
At first we specify what we mean in this paper by a classical solution of  problem \eqref{iv-nnls}.
\begin{definition}(Classical Schwartz solution)
	We say that $q(x,t)$ is a classical Schwartz solution of the Cauchy problem \eqref{iv-nnls} for $t\in[-T_1,T_2]$, $T_j>0$, with $q_0(x)\in\mathcal{S}(\mathbb{R})$ if
	\begin{enumerate}[(i)]
		\item $q(x,t)$ satisfies equation \eqref{nnls} for all $x\in\mathbb{R}$, $t\in[-T_1,T_2]$,
		\item $q(x,0)=q_0(x)$ for all $x\in\mathbb{R}$,
		\item $\partial_t^nq(\cdot,t)\in\mathcal{S}(\mathbb{R})$ for all $t\in[-T_1,T_2]$, $n\in\mathbb{N}\cup\{0\}$.
	\end{enumerate}
\end{definition}

\begin{theorem}\label{th-class-glob}
	(Classical Schwartz solution: no solitons).
	Suppose that the initial data $q_0(x)\in\mathcal{S}(\mathbb{R})$ is such that:
	\begin{enumerate}
		\item The spectral function $a_1(k)$ does not have zeros in ${\mathbb C}^+$ and the
		spectral function $a_2(k)$ does not have zeros in ${\mathbb C}^-$.
		\item $\|r_j(k)\|_{L^\infty(\mathbb{R})}<1$, $j=1,2$.
	\end{enumerate}
	Then the Cauchy problem \eqref{iv-nnls} has the unique global classical Schwartz solution $q(x,t)$, which can be obtained by \eqref{q(x,t)} in terms of the solution of the basic RH problem.
	%(\ref{jump-M}), (\ref{ist4.3}), (\ref{norm})
\end{theorem}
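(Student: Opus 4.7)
Under Assumptions~1 and~2 together with hypothesis~1 of the theorem, the basic RH problem has no residue conditions, so it reduces purely to the jump problem \eqref{jump-M} with normalization \eqref{norm}. The plan is to first establish solvability and smoothness of the RH problem for all $(x,t)\in\mathbb{R}^2$, then define $q(x,t)$ by \eqref{q(x,t)} and check (a) the PDE, (b) the initial condition, (c) Schwartz regularity, (d) uniqueness. Since $q_0\in\mathcal{S}(\mathbb{R})$, Proposition~\ref{r-S} gives $r_j\in\mathcal{S}(\mathbb{R})$, and hypothesis~2 yields $\|w^\pm(x,t,\cdot)\|_{L^\infty(\mathbb{R})}=\|r_j\|_{L^\infty}<1$ uniformly in $(x,t)$. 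Hence by the discussion preceding the theorem, $\|\mathcal{C}_w\|_{L^2\to L^2}<1$ uniformly in $(x,t)$, so $(I-\mathcal{C}_w)^{-1}$ exists on $L^2(\mathbb{R})$ with norm bounded uniformly in $(x,t)$, and \eqref{int-mu} has a unique solution $\mu^{\#}(x,t,\cdot)\in L^2(\mathbb{R})$. Then $M$ defined by \eqref{M-int-eq} solves the RH problem, and uniqueness is the Liouville argument already recalled in the excerpt.

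Next I would establish that $q(x,t):=2\I\lim_{k\to\infty}kM_{12}(x,t,k)$ is smooth and rapidly decaying. Differentiating \eqref{int-mu} with respect to $x$ and $t$ produces equations of the same form $(I-\mathcal{C}_w)\partial^\alpha\mu^{\#}=F_\alpha$, where $F_\alpha$ is built from $\mu^{\#}$ and derivatives of $w^\pm$; each $\partial_x^j\partial_t^\ell w^\pm$ is a polynomial in $k$ of controlled degree times $r_j(k)$ times an oscillatory exponential, so the Schwartz decay of $r_j$ absorbs these polynomial weights and $F_\alpha\in L^2(\mathbb{R})$ uniformly on compact sets of $(x,t)$. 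This gives $M\in C^\infty$ in $(x,t)$ with all derivatives enjoying the same large-$k$ expansion, from which the Schwartz decay of $q(x,t)$ in $x$ for each $t$ follows by a stationary-phase / integration-by-parts argument on the formula $kM_{12}=\frac{-1}{2\pi}\int\mu_{11}r_2e^{-2\I k\zeta-4\I k^2 t}\ldots$: repeated integration by parts against the oscillation $e^{-2\I \zeta x}$ trades one factor of $x$ for a derivative of a Schwartz function of $\zeta$.

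To verify the NNLS equation, I would run the standard dressing argument. The $k$-dependence of the jump $J$ is exactly $e^{(-\I kx-2\I k^2 t)\sigma_3}J_0(k)e^{(\I kx+2\I k^2t)\sigma_3}$, so both $\Psi:=M$ and $(\partial_x+\I k\sigma_3)M\cdot M^{-1}$ (respectively $(\partial_t+2\I k^2\sigma_3)M\cdot M^{-1}$) have no jump across $\mathbb{R}$; the large-$k$ expansion $M=I+M_1(x,t)/k+O(k^{-2})$ then forces $\Psi$ to satisfy the Lax pair \eqref{Lax} with $U,V$ reconstructed from $M_1$, and in particular the $(1,2)$-entry $q=2\I(M_1)_{12}$ and the $(2,1)$-entry $-\sigma\overline{q(-x,t)}=2\I(M_1)_{21}$ (the latter identity coming from the symmetry \eqref{M-sym-as}, which must be checked to survive the reconstruction). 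Compatibility of \eqref{Lax} then yields \eqref{nnls} pointwise. For the initial condition, at $t=0$ the RH problem with data $\{r_j,\mathcal{Z}=\emptyset\}$ is, by the direct-transform construction in Section~\ref{ist-RH}, solved by $M(x,0,k)$ built from the Jost solutions of $q_0$; uniqueness of the RH solution thus gives $q(x,0)=q_0(x)$. Uniqueness of $q$ on $\mathbb{R}^2$ is inherited from uniqueness of $M$.

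The main obstacle is the dressing step: one has to justify that the symmetry \eqref{M-sym} is preserved by the small-norm solution of \eqref{int-mu} (so that the reconstructed potential genuinely has the nonlocal structure $-\sigma\overline{q(-x,t)}$ in the off-diagonal entry, rather than an independent function), and that the formal differentiations of $M$ in $x,t$ lift to genuine identities in $L^2$. The symmetry follows by applying $\Lambda\overline{(\cdot)(-x,t,-\bar\cdot)}\Lambda^{-1}$ to the integral equation and checking that the transformed $\mu$ satisfies the same equation, hence coincides with $\mu$ by uniqueness; this is the step I expect to require the most care, since it is what ties the RH construction to the \emph{nonlocal} nonlinearity.
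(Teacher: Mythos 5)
Your skeleton is essentially the paper's: small-norm solvability of the Beals--Coifman equation \eqref{int-mu} from $\|r_j\|_{L^\infty}<1$, Schwartz reflection coefficients via Proposition \ref{r-S}, smoothness by differentiating \eqref{int-mu} in $x$ and $t$ (the paper's Proposition \ref{r-j-Schwartz} and Corollaries \ref{x-deriv-M}, \ref{t-deriv-M}), and the dressing/Liouville argument in which the symmetry \eqref{M-sym}--\eqref{M-sym-as} supplies the nonlocal structure of $U$ (Propositions \ref{inv-x}, \ref{inv-t}); your identification of the symmetry-preservation step as the delicate point is exactly right. However, two steps of your plan have genuine gaps. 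The first is uniqueness: "uniqueness of $q$ is inherited from uniqueness of $M$" only shows that the reconstructed solution is well defined; the theorem asserts uniqueness within the class of classical Schwartz solutions. You must take an arbitrary classical Schwartz solution $\tilde q$ with $\tilde q(\cdot,0)=q_0$, use that it satisfies the compatibility $\Phi_{xt}=\Phi_{tx}$ of the Lax pair \eqref{Lax}, so its scattering data are those generated by $q_0$ with the standard time evolution, and conclude that $\tilde q$ is recovered by \eqref{q(x,t)} from the very same RH problem as $q$, hence $\tilde q\equiv q$. This direct-transform argument is how the paper closes the proof and is absent from your proposal.

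The second gap is the mechanism for rapid decay in $x$. Integration by parts in $\zeta$ against $e^{-2\I\zeta x}$ works for the explicit term $\int r_2(\zeta)e^{-2\I\zeta x-4\I\zeta^2t}\,d\zeta$ (a Fourier transform of a Schwartz function), but not for the contribution of $\mu^{\#}=(I-\mathcal{C}_w)^{-1}\mathcal{C}_w(I)$: every $\zeta$-derivative of $w^{\pm}$ brings down a factor of $x$ from the oscillatory exponential, so repeated integration by parts does not gain decay there. The paper instead uses the one-sided Fourier-support identity \eqref{C_+f}, which gives $\|\mathcal{C}_+\bigl(r_2(\cdot)e^{-2\I(\cdot)x-4\I(\cdot)^2t}\bigr)\|_{L^2}\leq C\|r_2\|_{H^{s,0}}(1+x^2)^{-s/2}$ only on the half-line $x\leq0$, and then, crucially, switches to the second (lower--upper) factorization \eqref{low-upp} conjugated by the scalar $\delta$ of \eqref{delta} so that the relevant projection becomes $\mathcal{C}_-$, whose kernel is supported on $(2x,+\infty)$, yielding the matching decay as $x\to+\infty$ (see the proof of Proposition \ref{Hss} and its refinement in Proposition \ref{r-j-Schwartz}). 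Without this two-factorization device your argument establishes decay on one half-line only, so item (iii) of the definition of a classical Schwartz solution would not be verified as written.
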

\begin{proof}
	First, since $\|r_j(k)\|_{L^\infty(\mathbb{R})}<1$, there exists (and unique) $M(x,t,k)$ which satisfies the corresponding basic RH problem.
	Since $q_0(x)\in\mathcal{S}(\mathbb{R})$, from Proposition \ref{r-S} we conclude that $r_j(k)\in\mathcal{S}(\mathbb{R})$.
	Then, Proposition \ref{r-j-Schwartz} implies that $\partial_t^nq(\cdot,t)\in\mathcal{S}(\mathbb{R})$.
	Moreover, in view of Corollaries \ref{x-deriv-M} and \ref{t-deriv-M} we can differentiate $M(x,t,k)$ w.r.t.\,\,$x$ and $t$, so
	from Propositions \ref{inv-x} and \ref{inv-t} we conclude that $M(x,t,k)e^{(-\I kx-2\I k^2t)\sigma_3}$ satisfies the Lax pair \eqref{Lax} with $q(x,t)$ determined by
	\eqref{q(x,t)} and  satisfying the NNLS equation.
	
	%\red{The uniqueness of $q(x,t)$ follows form the uniqueness of the solution of the RH problem specified uniquely by $q_0$.}
	
	%\red{Let us prove uniqueness.
	%Assume that $q(x,t)$ and $\tilde{q}(x,t)$ are two classical Schwartz solutions of \eqref{iv-nnls}.
	%%such as $q(\cdot,t),\tilde{q}(\cdot,t)\in L^1(\mathbb{R})$.
	%Then applying the direct transform we conclude that both $q$ and $\tilde{q}$ can be obtained by \eqref{q(x,t)} via the unique solution of the %same RH problem.}
	
	Let us prove uniqueness. The compatibility condition $\Phi_{xt}=\Phi_{tx}$ of the system \eqref{Lax} is valid for the two Schwartz solutions $q$ and $\tilde{q}$, which implies that they both can be obtained by \eqref{q(x,t)} via the unique solution of the same RH problem specified by $q(x,0)=\tilde{q}(x,0)=q_0(x)$.
\end{proof}
\begin{remark}
	The long-time asymptotic behavior
	of the global solution $q(x,t)$ along the rays $x/t=const$ under assumptions of  Theorem \ref{th-class-glob} was obtained in \cite{RS19} by applying the nonlinear steepest decent method \cite{DZ}.
\end{remark}
\begin{proposition}\label{prop-suff}
	(Sufficient condition for Items 1,2 in Theorem \ref{th-class-glob}).
	If $q_0(x)\in\mathcal{S}(\mathbb{R})$ is such that
	\begin{equation}\label{suff-Bessel}
	%2\|q_0\|_{L^1(\mathbb{R})}<2-I_0(2\|q_0\|_{L^1(\mathbb{R})}),
	\frac{1}{2}(\|q_0\|_{L^1(\mathbb{R})}+1)
	I_0(2\|q_0\|_{L^1(\mathbb{R})})
	<1,
	\end{equation}
	where $I_0(\cdot)$ is the modified Bessel function, then assumprions 1 and 2 in Theorem \ref{th-class-glob} are fulfilled.
\end{proposition}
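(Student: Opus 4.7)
The plan is to derive sharp pointwise bounds on the Jost solutions $\psi_1$, $\psi_4$ and on the spectral function $b$ purely in terms of $Q:=\|q_0\|_{L^1(\mathbb{R})}$ by Neumann iteration of the Volterra equations \eqref{psi-1-3} and \eqref{psi24}, and then combine them with \eqref{ab-lim} and \eqref{r-12} so that both conditions of Theorem~\ref{th-class-glob} follow from the single scalar inequality \eqref{suff-Bessel}.

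First I would eliminate $\psi_3$ from \eqref{psi-1-3} at $t=0$ to obtain $\psi_1=\sum_{n=0}^{\infty}(-\sigma)^n(KK^*)^n 1$, where $K$ and $K^*$ denote the Volterra operators with kernels $q_0(y)$ and $e^{2\I k(x-y)}\overline{q_0(-y)}$, respectively. For $k\in\overline{\mathbb{C}^+}$ one has $|e^{2\I k(y_{2j}-y_{2j-1})}|\le 1$ on the ordered region $y_{2j-1}<y_{2j}$, so
\[
|(KK^*)^n 1(x)|\le \int_{y_1<y_2<\cdots<y_{2n}<x}|q_0(-y_1)|\,|q_0(y_2)|\cdots|q_0(-y_{2n-1})|\,|q_0(y_{2n})|\,dy_1\cdots dy_{2n}=:J_{*}(x).
\]
The key combinatorial step is the symmetrization identity
\[
\Bigl(\int_{-\infty}^{x}|q_0(y)|\,dy\Bigr)^{n}\Bigl(\int_{-\infty}^{x}|q_0(-y)|\,dy\Bigr)^{n}=(n!)^2\sum_{P}J_P(x),
\]
obtained by partitioning $(-\infty,x)^{2n}$ according to the ordering of $n$ ``$f$-labels'' and $n$ ``$g$-labels'': each of the $\binom{2n}{n}$ unlabeled type patterns $P$ (a choice of which of the $2n$ sorted positions carry $|q_0(y)|$ and which carry $|q_0(-y)|$) is realized by exactly $(n!)^2$ labelings, all yielding the same nonnegative simplex integral $J_P(x)\ge 0$. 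Since $J_{*}(x)$ is one such $J_P(x)$, it is bounded above by $Q^{2n}/(n!)^2$. Summing over $n$ yields the Bessel bound
\[
|\psi_1(x,0,k)|\le I_0(2Q),\qquad x\in\mathbb{R},\ k\in\overline{\mathbb{C}^+},
\]
and an identical argument applied to \eqref{psi24} delivers $|\psi_4(x,0,k)|\le I_0(2Q)$ for $k\in\overline{\mathbb{C}^-}$.

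Passing to $x\to+\infty$ in \eqref{ab-lim} then gives $|a_j(k)-1|\le I_0(2Q)-1$; the hypothesis $\tfrac12(Q+1)I_0(2Q)<1$ forces $I_0(2Q)<2/(Q+1)\le 2$, hence $|a_j(k)|\ge 2-I_0(2Q)>0$ on the respective closed half planes, proving Item~1. For Item~2, substituting $\psi_3=-\sigma K^*\psi_1$ back into \eqref{ab-lim} produces $b(k)=-\sigma\int_{-\infty}^{+\infty}e^{-2\I ky}\overline{q_0(-y)}\psi_1(y,0,k)\,dy$, so $|b(k)|\le QI_0(2Q)$ on $\mathbb{R}$, and hence
\[
|r_1(k)|\le\frac{QI_0(2Q)}{2-I_0(2Q)},
\]
with the same bound for $|r_2(k)|$ via \eqref{r-12}. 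A direct rearrangement shows this upper bound is strictly less than $1$ iff $(Q+1)I_0(2Q)<2$, which is exactly \eqref{suff-Bessel}.

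The main obstacle I anticipate is the $(n!)^2$ in the denominator: a naive Gronwall or submultiplicative $L^\infty$ argument only yields exponential growth $e^{2Q}$ for $|\psi_1|$, which is insufficient to close the self-consistent condition $(Q+1)I_0(2Q)<2$. The symmetrization identity above, which crucially exploits that the two densities $|q_0(\cdot)|$ and $|q_0(-\cdot)|$ enter in separate groups of $n$ factors each, is what produces the sharp factorial decay and is the only nontrivial ingredient in the argument.
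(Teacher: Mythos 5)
Your proposal is correct and follows essentially the same route as the paper's proof: Neumann iteration of the Volterra equation obtained by eliminating $\psi_3$ (cf.\ \eqref{psi-1-V}), the modified-Bessel bound $|\psi_1|\le I_0(2\|q_0\|_{L^1})$ with the $(n!)^2$ decay (cf.\ \eqref{N-ser-F-1}), the resulting lower bound $|a_j(k)|\ge 2-I_0(2\|q_0\|_{L^1})$ and upper bound $|b(k)|\le \|q_0\|_{L^1}I_0(2\|q_0\|_{L^1})$, and the observation that $|r_j|<1$ is equivalent to \eqref{suff-Bessel}. The only difference is that you spell out the symmetrization argument behind the factorial decay, which the paper asserts directly from the standard Neumann-series estimate.
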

Condition \eqref{suff-Bessel} holds, particularly, if
$\|q_0\|_{L^1(\mathbb{R})}\leq 0.532$,
but is violated for
$\|q_0\|_{L^1(\mathbb{R})}\geq0.533$.
\begin{proof}
	From \eqref{psi-1-3} we conclude that $\psi_1$ satisfies the Volterra integral equation (cf.\,\,\break Section 1.3 in \cite{AS})
	\begin{equation}\label{psi-1-V}
	\psi_1(x,0,k)=1-\sigma\int_{-\infty}^{x}
	\overline{q_{0}(-y)}
	\int_y^x e^{2\I k(z-y)}
	q_0(z)\,dz\,\psi_1(y,0,k)\,dy.
	\end{equation}
	Using the Neumann series for $\psi_1$, we can write it in the form $\psi_1(x,0,k)=1+F_1(x,k)$, with
	\begin{equation}\label{N-ser-F-1}
	\begin{split}
	|F_1(x,k)|&\leq
	Q_0(x)R_0(x)+\frac{1}{(2!)^2}Q_0^2(x)R_0^2(x)+\frac{1}{(3!)^2}Q_0^3(x)R_0^3(x)+\ldots\\
	&=
	I_0(2\sqrt{Q_0(x)R_0(x)})-1,
	\end{split}
	\end{equation}
	for $k\in\overline{\mathbb{C}^+}$,
	where $Q_0(x)=\int_{-\infty}^{x}|q_0(y)|\,dy$ and $R_0(x)=\int_{-\infty}^{x}|q_{\,0}(-y)|\,dy$.
	Since $R_0(\infty)=Q_0(\infty)=\|q_0\|_{L^1}$, it follows that if $I_0(2\|q_0\|_{L^1})<2$, then  $a_1(k)=1+F_1(\infty,k)$ for $\Im k\in\overline{\mathbb{C}^{+}}$ with $|F_1(\infty,k)|<1$,
	where $F_1(\infty,k)
	=\lim\limits_{x\to+\infty}F_1(x,k)$.
	The latter implies that $a_1$ does not have zeros in $\overline{\mathbb{C}^{+}}$.
	The same is true for $a_2(k)$ in $\overline{\mathbb{C}^{-}}$.
	
	Since $I_0(\cdot)$ is monotone for positive arguments, from the second equation in \eqref{psi-1-3} we have
	\begin{equation}
	|\psi_3(x,k)|\leq I_0(2\sqrt{Q_0(x)R_0(x)})R_0(x).
	\end{equation}
	Moreover, $|a_1(k)|\geq 2-I_0(2\|q_0\|_{L^1})$, so for $r_1$ we have the estimate
	\begin{equation}
	|r_1(k)|\leq\frac{\|q_0\|_{L^1}I_0(2\|q_0\|_{L^1})}
	{2-I_0(2\|q_0\|_{L^1})},\quad
	k\in\mathbb{R}.
	%<\frac{2\|q_0\|_{L^1}}{2-I_0(2\|q_0\|_{L^1})}.
	\end{equation}
	This implies that if \eqref{suff-Bessel} holds, then $\|r_1(k)\|_{L^{\infty}}<1$.
	The same is true for $r_2(k)$.
\end{proof}
\subsection{Smooth solutions with blow-up points}

Allowing the spectral functions $a_j(k)$ to have zeros leads to RH problems with nontrivial residue conditions, which eventually lead to solutions of the Cauchy problem \eqref{iv-nnls} that may blow-up
at some finite $x$ and $t$ (which, in particular, is the case of the
one soliton solution \eqref{one-soliton}).
In Theorems \ref{th-class-one-sol} and \ref{th-class-L-sol} below we show that  $q(x,t)$ given by \eqref{q(x,t)} is the solution of the Cauchy problem \eqref{iv-nnls} in the whole $(x,t)$ plane except the points where it can blow up.

Let us give a precise definition of a smooth solution with singularities.
\begin{definition}\label{blow-up-S}
	(Blowing-up Schwartz solution)
	We say that $q(x,t)$ is a blowing-up Schwartz solution of the Cauchy problem \eqref{iv-nnls} with
	$q_0(x)\in\mathcal{S}(\mathbb{R})$
	and with a closed set of
	blow-up points $\mathcal{B}$, which lies either in the band $\{(x,t)\in\mathbb{R}^2:|x|<R\}$ for some $R>0$ or in the sector
	$\{(x,t)\in\mathbb{R}^2:|t|>\alpha|x|-C\}$, for some $\alpha, C>0$,
	if
	\begin{enumerate}[(i)]
		\item $q(x,t)$ satisfies equation \eqref{nnls} for all $(x,t)\in\mathbb{R}^2\setminus\mathcal{B}$ while $q(x,t)=\infty$ for all $(x,t)\in\mathcal{B}$,
		\item $q(x,0)=q_0(x)$ for all $x\in\mathbb{R}$,
		\item for all
		$(x,t)\in\mathbb{R}^2\setminus\mathcal{B}$,
		the derivatives
		$\partial_x^s\partial_t^nq(x,t)$ exist for all $s,n\in\mathbb{N}\cup\{0\}$; moreover,
		$|x|^m\partial_x^s\partial_t^nq(x,t)\to 0$ as $x\to\pm\infty$ for all $s,n,m\in\mathbb{N}\cup\{0\}$ and all fixed $t\in\mathbb{R}$.
	\end{enumerate}
\end{definition}

To fix ideas, let us consider first
the case of one pair of zeros $k=\I\rho_{j1}$, $j=1,2$ (observe that due to Proposition \ref{defoc-zeros},
this is possible in the case $\sigma=1$ only).
Notice that in order to establish smoothness at $(x,t)\in\mathbb{R}\setminus\mathcal{B}$, we should differentiate the RH problem w.r.t. both $x$ and $t$, cf. \cite{DZ03,JLPS20}.
\begin{theorem}\label{th-class-one-sol}
	(Blowing-up Schwartz solution: one soliton)
	Consider the Cauchy problem \eqref{iv-nnls} with $\sigma=1$ and with initial data $q_0(x)\in\mathcal{S}(\mathbb{R})$ satisfying the following properties:
	\begin{enumerate}
		\item $a_1(k)$ has one simple zero $k=\I\rho_{1,1}$, $\rho_{1,1}>0$
		and $a_2(k)$ has one simple zero $k=\I\rho_{2,1}$, $\rho_{2,1}<0$.
		\item $\|r_j^{\mathrm{reg}}(k)\|_{L^\infty(\mathbb{R})}<1$, $j=1,2$, where
		$r_1^{\mathrm{reg}}(k):=
		\frac{k-\I\rho_{1,1}}{k-\I\rho_{2,1}}r_1(k)$ and
		$r_2^{\mathrm{reg}}(k):=
		\frac{k-\I\rho_{2,1}}{k-\I\rho_{1,1}}r_2(k)$.
	\end{enumerate}
	Let the 2-vectors
	$g=\left(
	\begin{smallmatrix}
	g_1\\
	g_2
	\end{smallmatrix}
	\right)$
	and
	$h=\left(
	\begin{smallmatrix}
	h_1\\
	h_2
	\end{smallmatrix}
	\right)$ be defined by
	\begin{subequations}\label{g-j-h-j}
		\begin{align}
		\nonumber
		g(x,t)=&\,\I(\rho_{1,1}-\rho_{2,1})
		(M^{\mathrm{reg}})^{[1]}(x,t,\I\rho_{1,1})\\
		&-\frac{\gamma_{1,1}}{\dot{a}_1(\I\rho_{1,1})}
		e^{-2\rho_{1,1}x-4\I\rho_{1,1}^2t}
		(M^{\mathrm{reg}})^{[2]}(x,t,\I\rho_{1,1}),\\
		\nonumber
		h(x,t)=&\,\I(\rho_{1,1}-\rho_{2,1})
		(M^{\mathrm{reg}})^{[2]}(x,t,\I\rho_{2,1})\\
		&+\frac{\gamma_{2,1}}{\dot{a}_2(\I\rho_{2,1})}
		e^{2\rho_{2,1}x+4\I\rho_{2,1}^2t}
		(M^{\mathrm{reg}})^{[1]}(x,t,\I\rho_{2,1}),
		\end{align}
	\end{subequations}
	where $M^{\mathrm{reg}}$ solves the RH problem \eqref{M-reg-RH}, and let $\mathcal{B}$ be defined as follows:
	\begin{equation}\label{B-one-sol}
	\mathcal{B}:=\{(x,t)\in\mathbb{R}^2:
	g_1(x,t)h_2(x,t)-g_2(x,t)h_1(x,t)=0\}.
	\end{equation}
	Then
	\begin{enumerate}[(i)]
		\item
		$\mathcal{B}$ is a closed set, which lies in a band $\{(x,t)\in\mathbb{R}^2:|x|<R\}$ for some $R>0$;
		\item
		$q(x,t)$ defined by \eqref{q(x,t)} via the solution of the  basic RH problem is a blowing-up Schwartz solution, with blow-up points in $\mathcal{B}$.
	\end{enumerate}
	Moreover, $q(x,t)$ converges to the one-soliton solution  as $x$ and $t$ become large:
	\begin{equation}\label{one-sol-gen-th}
	\begin{split}
	q(x,t)=
	&\frac{-2\I(\rho_{1,1}-\rho_{2,1})^2\dot{a}_1(\I\rho_{1,1})}
	{\gamma_{2,1}^{-1}\dot{a}_1(\I\rho_{1,1})
		\dot{a}_2(\I\rho_{2,1})
		(\rho_{1,1}-\rho_{2,1})^2
		e^{-2\rho_{2,1}x-4\I\rho_{2,1}^2t}
		-\gamma_{1,1}e^{-2\rho_{1,1}x-4\I\rho_{1,1}^2t}}\\
	&+\osmall(1),
	\end{split}
	\end{equation}
	as $|x|^2+|t|^2 \to\infty$ avoiding, if $\rho_{1,1}\neq-\rho_{2,1}$, all $(x^\prime, t_s)$ with
	\begin{subequations}\label{sing-points}
		\begin{align}
		&x^\prime=\frac{1}{2(\rho_{2,1}-\rho_{1,1})}
		\ln|\dot{a}_1(\I\rho_{1,1})
		\dot{a}_2(\I\rho_{2,1})
		(\rho_{1,1}-\rho_{2,1})^2|,\\
		& t_s=\frac{\arg\gamma_{1,1}
			-\arg(\gamma_{2,1}^{-1}\dot{a}_1(\I\rho_{1,1})
			\dot{a}_2(\I\rho_{2,1})) + 2\pi s}
		{4(\rho_{1,1}^2-\rho_{2,1}^2)},\quad s\in\mathbb{Z}.
		\end{align}
	\end{subequations}
	%The set $\mathcal{B}$ of blow-up points has the following properties:
	%\begin{enumerate}[a)]
	%	\item for any analytical curve $\gamma\subset\mathbb{R}^2$ the set $\mathcal{B}\cap\{(x,t): |x|^2+|t|^2<R^2\}\cap\gamma$ is finite for all $R>0$,
	%	\item if $\rho_{2,1}=-\rho_{1,1}$, then there exists $R>0$ such as
	%	$\mathcal{B}\cap\{(x,t):|x|^2+|t|^2\geq R^2\}=\emptyset$,
	%	\item if $\rho_{2,1}\neq-\rho_{1,1}$, then for any $\varepsilon>0$ there exists $R>0$ such as (see \eqref{sing-points})
	%	$$
	%	\mathcal{B}\cap\{(x,t):|x|^2+|t|^2\geq R^2,\,
	%	|x-x^\prime|^2+|t-t_s|^2\geq\varepsilon^2,\,s\in\mathbb{Z}\}=\emptyset.
	%	$$
	%\end{enumerate}
	
\end{theorem}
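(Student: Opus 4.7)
My plan is to reduce the meromorphic basic RH problem with the pair of simple poles at $\I\rho_{1,1}$ and $\I\rho_{2,1}$ to a \emph{holomorphic} RH problem by an explicit rational conjugation, solve the latter by the same small-norm machinery as in the proof of Theorem \ref{th-class-glob}, and then recover the original $M$ (and hence $q$) algebraically. The blow-up set $\mathcal{B}$ will appear precisely as the vanishing locus of the determinant of the $2\times 2$ system produced by this algebraic step.

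First I would introduce $M^{\mathrm{reg}}(x,t,k)$ as the right-multiplication of $M$ by the diagonal rational factor built from $\frac{k-\I\rho_{1,1}}{k-\I\rho_{2,1}}$ which cancels the pole of $M^{[1]}$ at $\I\rho_{1,1}$ in $\mathbb C^{+}$ (and, correspondingly, the pole of $M^{[2]}$ at $\I\rho_{2,1}$ in $\mathbb C^{-}$) while leaving the $k\to\infty$ normalization intact. The resulting function satisfies the residueless RH problem \eqref{M-reg-RH}, whose jump is again of the form \eqref{ist4.3} but with $r_j$ replaced by $r_j^{\mathrm{reg}}$. By Proposition \ref{r-S} the $r_j^{\mathrm{reg}}$ are Schwartz, and the assumption $\|r_j^{\mathrm{reg}}\|_{L^{\infty}}<1$ puts us in the situation of Theorem \ref{th-class-glob}: the associated singular equation \eqref{int-mu} is uniquely solvable in $L^{2}$ for every $(x,t)\in\mathbb R^{2}$, and $M^{\mathrm{reg}}$ depends smoothly on $(x,t,k)$ with Schwartz-type decay in each real variable by Propositions \ref{inv-x}, \ref{inv-t}, \ref{r-j-Schwartz} together with Corollaries \ref{x-deriv-M}, \ref{t-deriv-M}. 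In particular, the vectors $g(x,t)$ and $h(x,t)$ defined in \eqref{g-j-h-j} are $C^{\infty}$ on $\mathbb R^{2}$, so $\mathcal B$ defined by \eqref{B-one-sol} is closed.

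The reconstruction of $M$ from $M^{\mathrm{reg}}$ is then algebraic: write $M(x,t,k)=M^{\mathrm{reg}}(x,t,k)\,B(x,t,k)$ with $B(x,t,\cdot)$ a rational matrix in $k$ satisfying $B\to I$ as $k\to\infty$ and having prescribed residues at $\I\rho_{j,1}$ dictated by \eqref{res-upper}, \eqref{res-lower}. Matching those residues produces for each column of $B$ a $2\times 2$ linear system whose coefficient vectors are exactly $g$ and $h$; Cramer's rule expresses $M^{[j]}(x,t,\I\rho_{j,1})$, and hence $B$, with the common denominator $g_{1}h_{2}-g_{2}h_{1}$. Thus $M$ is well defined and unique for $(x,t)\notin\mathcal B$, and $q(x,t)$ obtained from \eqref{q(x,t)} satisfies the NNLS equation there by Propositions \ref{inv-x}, \ref{inv-t}, with the pointwise Schwartz-in-$x$ behaviour required by Definition \ref{blow-up-S} inherited from $M^{\mathrm{reg}}$. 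To confine $\mathcal B$ to a band $|x|<R$, I would exploit the explicit exponential dependence on $x$ in \eqref{g-j-h-j}: each of $g$ and $h$ splits as a Schwartz-bounded ``regular'' term plus one carrying a factor $e^{\mp 2\rho_{j,1}x}$; since $\rho_{1,1}>0>\rho_{2,1}$, one of the two exponentials dominates in each of $g_{1}h_{2}$ and $g_{2}h_{1}$, and elementary estimates show that their difference is bounded away from zero once $|x|$ exceeds some $R>0$.

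Finally, for \eqref{one-sol-gen-th} I would show that $M^{\mathrm{reg}}(x,t,\I\rho_{j,1})\to I$ as $|x|^{2}+|t|^{2}\to\infty$ outside the prescribed neighbourhoods of $(x',t_s)$. Substituting $M^{\mathrm{reg}}=I+\osmall(1)$ into the Cramer formulas for $B$ and then into \eqref{q(x,t)} collapses the construction to the pure-soliton RH problem (only residues, no jump), whose explicit rational solution produces the right-hand side of \eqref{one-sol-gen-th} with singular points at exactly $(x',t_s)$ from \eqref{sing-points}. The main obstacle is precisely this uniform asymptotic step: the regular jump oscillates rapidly in $k\in\mathbb R$ with phase $2\I kx+4\I k^{2}t$, and one must justify $M^{\mathrm{reg}}\to I$ uniformly throughout $\{|x|^{2}+|t|^{2}\geq R^{2}\}$ rather than along a single ray $x/t=\mathrm{const}$. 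This calls for a Deift--Zhou nonlinear steepest-descent analysis of the regular problem --- a quantitative refinement of the one carried out in \cite{RS19} --- that tracks the error in $M^{\mathrm{reg}}-I$ uniformly in the ray slope and localizes the failure to the required vertical line $x=x'$.
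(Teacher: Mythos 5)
Your overall blueprint (regularize the poles, solve the resulting small-norm RH problem, reconstruct $M$ algebraically, read off $\mathcal{B}$ as the zero locus of a $2\times2$ determinant, then collapse to the pure soliton for the asymptotics) is the paper's, but the regularization step as you set it up does not close. Write $D(k)$ for your diagonal factor $\mathrm{diag}\bigl(\tfrac{k-\I\rho_{1,1}}{k-\I\rho_{2,1}},\tfrac{k-\I\rho_{2,1}}{k-\I\rho_{1,1}}\bigr)$. Right-multiplication of $M$ by $D$ indeed kills the pole of $M^{[1]}$ at $\I\rho_{1,1}$ and of $M^{[2]}$ at $\I\rho_{2,1}$, but it simultaneously creates new ones: in $\mathbb{C}^{+}$ the second column becomes $\tfrac{k-\I\rho_{2,1}}{k-\I\rho_{1,1}}\Psi_2^{[2]}$, and $\Phi_2^{[2]}(x,t,\I\rho_{1,1})\neq0$ (it is proportional to $\Phi_1^{[1]}(x,t,\I\rho_{1,1})$ by \eqref{norm-const}), so a genuine pole appears at $\I\rho_{1,1}$, and likewise at $\I\rho_{2,1}$ in the first column in $\mathbb{C}^{-}$. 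Hence your $M^{\mathrm{reg}}=MD$ does not satisfy the residueless problem \eqref{M-reg-RH}. The reconstruction step is inconsistent with this definition as well: if $M^{\mathrm{reg}}=MD$ then $M=M^{\mathrm{reg}}D^{-1}$ and there is no freedom left to impose \eqref{res-upper}--\eqref{res-lower}; if instead you posit $M=M^{\mathrm{reg}}B$ with a general rational right factor $B$, the jump of $M$ becomes $B^{-1}J^{\mathrm{reg}}B$, which equals $J$ only when $DB$ commutes with $J(x,t,k)$ for all real $k$, forcing $B=D^{-1}$ again. A rational factor carrying the residue data must act on the left, where it does not disturb the jump: this is exactly the Blaschke--Potapov factor of \eqref{M-reg}, $B(x,t,k)=I+\frac{\I\rho_{2,1}-\I\rho_{1,1}}{k-\I\rho_{2,1}}P(x,t)$, with the projector pinned down by $\ker P=\mathrm{lin}_{\mathbb{C}}\{g\}$, $\Im P=\mathrm{lin}_{\mathbb{C}}\{h\}$ as in \eqref{ker-im}; it is the inversion of this projector, \eqref{P-i-j}, that produces the denominator $g_1h_2-g_2h_1$, the set $\mathcal{B}$, and the splitting $q=\tilde q^{\mathrm{sol}}_1+q^{\mathrm{reg}}$ on which all the smoothness and decay claims rest. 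Your ``$2\times2$ linear system with coefficient vectors $g,h$'' is the right intuition, but it cannot be extracted from a right-sided ansatz.

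Two further gaps. For \eqref{one-sol-gen-th} you declare a uniform-in-direction Deift--Zhou analysis to be the main obstacle and leave it open; the paper does not need it, since it combines the ray asymptotics $q^{\mathrm{reg}}=\osmall(1)$ and $M^{\mathrm{reg}}(x,t,k_0)-I=\osmall(1)$ along $x/t=const$ from \cite{RS19} with the Schwartz decay in $x$ at fixed $t$ supplied by Proposition \ref{M-fixed-k} (Item 2) applied to $M^{\mathrm{reg}}$, and then feeds this into \eqref{q-qsol-qreg}. Relatedly, your band argument by dominant exponentials in \eqref{g-j-h-j} only yields, at each fixed $t$, a radius $R(t)$; the uniform $R$ of item (i) is obtained in the paper as a consequence of \eqref{one-sol-gen-th} itself, because away from bounded $|x|$ the solution is $\osmall(1)$-close to an exact one-soliton whose singularities are confined to the single line $x=x^\prime$ of \eqref{sing-points}. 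So the asymptotic statement must be established before, not after, the confinement of $\mathcal{B}$.
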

\begin{proof}
	%First notice that $\sigma=1$ in view of Proposition \ref{defoc-zeros}.
	Let us transform the basic RH problem with residue conditions to a regular RH problem.
	Consider the $2\times2$ Blaschke-Potapov factor \cite{FT}
	$$
	B(x,t,k)=I+\frac{\I\rho_{2,1}-\I\rho_{1,1}}{k-\I\rho_{2,1}}P(x,t),
	$$
	where $P$ is a projector ($P^2(x,t)=P(x,t)$) to be specified,
	and define  $M^{\mathrm{reg}}$ by
	\begin{equation}\label{M-reg}
	M^{\mathrm{reg}}(x,t,k)=B(x,t,k)M(x,t,k)
	\begin{pmatrix}
	1& 0\\
	0& \frac{k-\I\rho_{2,1}}{k-\I\rho_{1,1}}
	\end{pmatrix}.
	\end{equation}
	Direct calculations show that if the projector $P(x,t)$ has the following kernel and image:
	\begin{equation}\label{ker-im}
	\ker P(x,t)=\mathrm{lin}_\mathbb{C}\{g(x,t)\}, \quad
	\Im P(x,t)=\mathrm{lin}_\mathbb{C}\{h(x,t)\},
	\end{equation}
	where the 2-vectors
	$g$ and $h$ are defined by \eqref{g-j-h-j},
	then $M^{\mathrm{reg}}$ satisfies the following
	regular RH problem (solvable due to condition $2$ in  Theorem):
	\begin{subequations}\label{M-reg-RH}
		\begin{align}
		&M^{\mathrm{reg}}_+(x,t,k)=M^{\mathrm{reg}}_-(x,t,k)
		J^{\mathrm{reg}}(x,t,k),&&k\in\mathbb{R},\\
		&M^{\mathrm{reg}}(x,t,k)\to I&&k\to\infty.
		\end{align}
	\end{subequations}
	Here
	\begin{equation}
	J^{\mathrm{reg}}(x,t,k)=
	\begin{pmatrix}
	1+r_1^{\mathrm{reg}}(k)r_{2}^{\mathrm{reg}}(k)
	& r_2^{\mathrm{reg}}(k)e^{-2\I kx-4\I k^2t}\\
	r_1^{\mathrm{reg}}(k)e^{2\I kx+4\I k^2t}& 1
	\end{pmatrix},
	\end{equation}
	with $r_1^{\mathrm{reg}}(k)=
	\frac{k-\I\rho_{1,1}}{k-\I\rho_{2,1}}r_1(k)$ and
	$r_2^{\mathrm{reg}}(k)=
	\frac{k-\I\rho_{2,1}}{k-\I\rho_{1,1}}r_2(k)$.
	Notice that conditions \eqref{ker-im} uniquely determine $P$ (we drop arguments $x$, $t$ for simplicity):
	\begin{equation}\label{P-i-j}
	P_{12}=\frac{g_1h_1}{g_1h_2-g_2h_1},\,
	P_{21}=\frac{-g_2h_2}{g_1h_2-g_2h_1},\,
	P_{11}=-\frac{P_{12}g_2}{g_1},\,P_{22}=-\frac{P_{21}g_1}{g_2}.
	\end{equation}
	From \eqref{q(x,t)} and \eqref{M-reg} we conclude that  $q(x,t)$ can be determined in terms of $M^{\mathrm{reg}}$ as follows:
	\begin{equation}\label{q-qsol-qreg}
	q(x,t)=\tilde{q}^{\mathrm{sol}}_1(x,t)
	+q^{\mathrm{reg}}(x,t),
	%-2(\rho_{1,1}-\rho_{2,1})P_{12}(x,t)
	%+2\I\lim\limits_{k\to\infty}kM_{12}^{\mathrm{reg}}(x,t,k).
	\end{equation}
	where
	\begin{subequations}\label{q-1-sol-qreg}
		\begin{align}
		\label{q-1-sol}
		&\tilde{q}^{\,\mathrm{sol}}_1(x,t)=
		-2(\rho_{1,1}-\rho_{2,1})P_{12}(x,t),\\
		&q^{\mathrm{reg}}(x,t)=
		2\I\lim\limits_{k\to\infty}kM_{12}^{\mathrm{reg}}(x,t,k).
		\end{align}
	\end{subequations}
	
	Now let us prove that $q(x,t)$
	%defined by \eqref{q(x,t)} (or, equivalently, by \eqref{q-qsol-qreg})
	is a blowing-up Schwartz solution with blow-up points in $\mathcal{B}$.
	To this end we investigate the properties of the functions
	$\tilde{q}^{\,\mathrm{sol}}_1$ and $q^{\mathrm{reg}}(x,t)$.
	Concerning the latter, observe that arguing similarly as in Theorem \ref{th-class-glob} we have that
	$\partial_t^nq^{\mathrm{reg}}(\cdot,t)
	\in\mathcal{S}(\mathbb{R})$ for all $n\in\mathbb{N}\cup\{0\}$.
	
	To study $\tilde{q}^{\,\mathrm{sol}}_1$ we first notice that Proposition \ref{M-fixed-k}, Item 2 for $M^{\mathrm{reg}}$ implies that the function $g_1h_2-g_2h_1$ (see \eqref{g-j-h-j}) is smooth and therefore the set $\mathcal{B}$ of its zeros is closed.
	Moreover, from \eqref{g-j-h-j}, \eqref{P-i-j} and \eqref{q-1-sol} we conclude that
	$\partial_x^s\partial_t^n
	\tilde{q}^{\,\mathrm{sol}}_1(x,t)$  exists
	for all $(x,t)\in\mathbb{R}^2\setminus\mathcal{B}$ and $s,n\in\mathbb{N}\cup\{0\}$.
	Then we observe that Proposition \ref{M-fixed-k}, Item 2 for $M^{\mathrm{reg}}$ and the definition of $P_{12}$
	imply that
	$|x|^m\partial_x^s\partial_t^n
	\tilde{q}^{\,\mathrm{sol}}_1(x,t)\to 0$ as $x\to\pm\infty$ for all $m,s,n\in\mathbb{N}\cup\{0\}$ and fixed $t\in\mathbb{R}$.
	
	After having verified the above properties of
	$\tilde{q}^{\,\mathrm{sol}}_1$ and
	$q^{\mathrm{reg}}$,
	Propositions \ref{inv-x} and \ref{inv-t} imply that $q(x,t)$ satisfies equation \eqref{nnls} with $\sigma=1$
	for all $(x,t)\in\mathbb{R}^2\setminus\mathcal{B}$.
	To show that $\mathcal{B}$ lies in the band
	$\{(x,t)\in\mathbb{R}^2:|x|<R\}$ it is enough to
	prove that \eqref{q-qsol-qreg} implies \eqref{one-sol-gen-th}.
	From Theorem 1 in \cite{RS19} it follows that $q^{\mathrm{reg}}=\osmall(1)$ as $t\to\infty$ along any ray $x/t=const$ (notice that condition 2 of the  Theorem implies that
	$\int_{-\infty}^{\xi}
	(1+r_1^{\mathrm{reg}}r_2^{\mathrm{reg}})
	\in\left(-\frac{\pi}{2},\frac{\pi}{2}\right)$).
	Moreover, from the analysis in \cite{RS19} it is clear that
	$(M^{\mathrm{reg}}(x,t,k_0)-I)_{ij}=\osmall(1)$, $i,j=1,2$ as $t\to\infty$, $x/t=const$ for all fixed $k_0\in\mathbb{C}\setminus\mathbb{R}$
	which together with Proposition \ref{M-fixed-k}, Item 2 for $M^{\mathrm{reg}}$ with $n=0$ and \eqref{q-qsol-qreg} implies \eqref{one-sol-gen-th}.
	Finally,  since $q(x,t)$ is the solution of the inverse problem, we have $q(x,0)=q_0(x)$ and thus $q(x,t)$ determined by \eqref{q(x,t)} (which is equivalent to \eqref{q-qsol-qreg}) is a blowing-up Schwartz solution.	
	%It remains to verify properties $a)-c)$ of the set $\mathcal{B}$.
	%Any analytical curve $\gamma\subset\{(x,t):|x|^2+|t|^2<R^2\}$, $R>0$,
	%can be described (at last locally) by an analytical function $x=x(t)$ or $t=t(x)$.
	%Therefore by the Fredholm alternative (see Proposition 4.3 in \cite{Z89-1}), $M^{\mathrm{reg}}$ is a meromorphic function w.r.t.\,\,parameter $t$ or $x$.
	%So, the denominator in $P_{12}$ is also a meromorphic function and it has at most finite number of zeros on this curve and we have Item $a)$.
	%Items b) and c) follows from \eqref{one-sol-gen-th}.
\end{proof}
Applying the Blaschke-Potapov factors recursively, we can generalize the results of Theorem \ref{th-class-one-sol} to the multisoliton case.
\begin{theorem}\label{th-class-L-sol}
	(Blowing-up Schwartz solution: $L$ solitons)
	Consider the Cauchy problem \eqref{iv-nnls} with initial data $q_0(x)\in\mathcal{S}(\mathbb{R})$ satisfying the following properties
	(recall that $\{(\cdot)_s\}_{s=1}^0=\emptyset$):
	\begin{enumerate}
		\item $a_1(k)$ has $L=2N_1+M_1$ simple zeros $\{\I\rho_{1,s}\}_{s=1}^{M_1}\cup\{\zeta_{1,s}, -\bar{\zeta}_{1,s}\}_{s=1}^{N_1}$,
		$M_1,N_1\in\mathbb{N}\cup\{0\}$
		and $a_2(k)$ has $L=2N_2+M_2$ simple zeros
		$\{\I\rho_{2,s}\}_{s=1}^{M_2}\cup\{\zeta_{2,s}, -\bar{\zeta}_{2,s}\}_{s=1}^{N_2}$,
		$M_2,N_2\in\mathbb{N}\cup\{0\}$.
		\item $\|r_j^{\mathrm{reg}}(k)\|_{L^\infty(\mathbb{R})}<1$, $j=1,2$, where
		$$
		r_1^{\mathrm{reg}}(k)=
		\prod\limits_{s=1}^{M_1}
		\frac{k-\I\rho_{1,s}}{k-\I\rho_{2,s}}
		\prod\limits_{s=1}^{N_1}
		\frac{(k-\zeta_{1,s})(k+\bar\zeta_{1,s})}
		{(k-\zeta_{2,s})(k+\bar\zeta_{2,s})}
		r_1(k),
		$$
		and
		$$
		r_2^{\mathrm{reg}}(k)=
		\prod\limits_{s=1}^{M_2}
		\frac{k-\I\rho_{2,s}}{k-\I\rho_{1,s}}
		\prod\limits_{s=1}^{N_2}
		\frac{(k-\zeta_{2,s})(k+\bar\zeta_{2,s})}
		{(k-\zeta_{1,s})(k+\bar\zeta_{1,s})}
		r_2(k).
		$$
	\end{enumerate}
	Let  the 2-vectors
	$g^{(m)}=\left(
	\begin{smallmatrix}
	g_1^{(m)}\\
	g_2^{(m)}
	\end{smallmatrix}
	\right)$
	and
	$h^{(m)}=\left(
	\begin{smallmatrix}
	h_1^{(m)}\\
	h_2^{(m)}
	\end{smallmatrix}
	\right)$, $m=1,\dots,L$ be defined recursively by \eqref{g-j-h-j-L} and let
	\begin{equation}\label{B-L-sol}
	\begin{split}
	\mathcal{B}=\{(x,t)\in\mathbb{R}^2:&\,\,
	g_1^{(m)}(x,t)h_2^{(m)}(x,t)-g_2^{(m)}(x,t)h_1^{(m)}(x,t)=0\\
	&\text{ for some }m \text{ with } 1\le m\le L\}.
	\end{split}
	\end{equation}
	Then
	\begin{enumerate}[(i)]
		\item
		$\mathcal{B}$ is a closed set, which in the case $N_1=N_2=0$ lies in the band $\{(x,t)\in\mathbb{R}^2:|x|<R\}$ for some $R>0$, while if $N_1>0$ or $N_2>0$ it lies in the sector $\{(x,t)\in\mathbb{R}^2:|t|>\alpha|x|-C\}$ for some $0<\alpha<\frac{1}{4\max\limits_{j,m}|\Re \zeta_{j,m}|}$ and $C>0$.
		\item
		$q(x,t)$ defined by \eqref{q(x,t)} via the solution of the basic RH problem is a blowing-up Schwartz solution, with blow-up points in $\mathcal{B}$.
	\end{enumerate}
	Moreover, $q(x,t)$ converges to $L$ soliton (cf.\,\,\cite{Y19}) as $x,t$ become large:
	\begin{equation}\label{L-sol-gen-th}
	q(x,t)=
	2\I\frac{\det M_1(x,t)}{\det M(x,t)}+\osmall(1),
	\end{equation}
	as $|x|^2+|t|^2\to\infty$ avoiding all $(x,t)$ such that $\frac{\det M_1(x,t)}{\det M(x,t)}=\infty$.
	Here $M(x,t)$ and $M_1(x,t)$ are given by \eqref{M-xi-eta} and \eqref{M-1-xi-eta} respectively.
\end{theorem}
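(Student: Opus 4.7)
The plan is to extend the Blaschke--Potapov reduction used in Theorem \ref{th-class-one-sol} by applying such factors recursively, one pair of zeros at a time, and then to analyze the resulting decomposition $q=q^{\mathrm{sol}}+q^{\mathrm{reg}}$. Concretely, I would enumerate the $2N+M$ pole pairs and construct a sequence $M=M^{(0)},M^{(1)},\dots,M^{(2N+M)}=M^{\mathrm{reg}}$, where at step $m$ one multiplies on the left by a Blaschke--Potapov factor of the form $B^{(m)}(x,t,k)=I+\frac{k^{(m)}_{-}-k^{(m)}_{+}}{k-k^{(m)}_{-}}P^{(m)}(x,t)$ (with $k^{(m)}_{\pm}$ the zeros of $a_1$ and $a_2$ being removed in that step), and then divides the second column by $\frac{k-k^{(m)}_{+}}{k-k^{(m)}_{-}}$, in exact analogy to \eqref{M-reg}. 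The rank--one projector $P^{(m)}$ is uniquely determined by the kernel/image relations \eqref{ker-im} with $g,h$ replaced by the recursively defined $g^{(m)},h^{(m)}$; the formulas are the straightforward adaptations of \eqref{g-j-h-j}, with $M^{\mathrm{reg}}$ there replaced by the intermediate function $M^{(m)}$ built from $M^{\mathrm{reg}}$ by undoing the last $2N+M-m$ Blaschke--Potapov factors. After all $2N+M$ steps, $M^{\mathrm{reg}}$ solves a regular RH problem with jump obtained from \eqref{ist4.3} by replacing $r_j$ with the regularized reflection coefficients $r_j^{\mathrm{reg}}$ from Assumption~2; the small-norm bound $\|r_j^{\mathrm{reg}}\|_{L^\infty}<1$ then guarantees solvability exactly as in Theorem~\ref{th-class-glob}.

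Next I would prove (i). By Proposition \ref{M-fixed-k} applied to $M^{\mathrm{reg}}$, each $(x,t)\mapsto g_1^{(m)}h_2^{(m)}-g_2^{(m)}h_1^{(m)}$ is $C^\infty$, so $\mathcal{B}$ is a finite union of closed sets, hence closed. To localize it, observe that $g^{(m)}$ and $h^{(m)}$ are built from $M^{\mathrm{reg}}$-columns evaluated at the discrete spectrum and the oscillatory/exponential factors $e^{\pm 2\I k^{(m)}x\pm 4\I (k^{(m)})^2 t}$. When $N=0$, all discrete spectrum lies on $\I\mathbb{R}$, so $|e^{\pm 2\I k^{(m)}x\pm 4\I(k^{(m)})^2t}|=e^{\mp 2\Im k^{(m)} x}$ depends only on $x$; since $M^{\mathrm{reg}}\to I$ as $|x|\to\infty$ (Proposition \ref{M-fixed-k}), the determinant $g_1^{(m)}h_2^{(m)}-g_2^{(m)}h_1^{(m)}$ is forced away from zero once $|x|$ exceeds some $R$. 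When $N>0$, the moduli $|e^{\pm 2\I\zeta x\pm 4\I\zeta^2 t}|$ carry both an $x$-linear and a $t$-linear exponent with rates involving $\Im\zeta$ and $2\Re\zeta\,\Im\zeta$; the balance forces zeros into a sector $|t|>\alpha|x|-C$ with $\alpha<(4\max_{j,m}|\Re\zeta_{j,m}|)^{-1}$, because outside such a sector one of the exponentials dominates and the determinant cannot vanish.

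For (ii), the decomposition $q=q^{\mathrm{sol}}+q^{\mathrm{reg}}$ generalizes \eqref{q-qsol-qreg}: the recursive Blaschke--Potapov structure produces $q^{\mathrm{sol}}$ as a rational expression in the entries of the projectors $P^{(m)}$ (hence in $g^{(m)},h^{(m)}$ and the exponentials), while $q^{\mathrm{reg}}(x,t)=2\I\lim_{k\to\infty}kM^{\mathrm{reg}}_{12}(x,t,k)$. Using Propositions \ref{inv-x}, \ref{inv-t} together with the smoothness of $M^{\mathrm{reg}}$ in $(x,t)$ (Corollaries \ref{x-deriv-M}, \ref{t-deriv-M}), $M^{\mathrm{reg}}(x,t,k)e^{(-\I kx-2\I k^2 t)\sigma_3}$ satisfies the Lax pair on $\mathbb{R}^2\setminus\mathcal{B}$, so $q$ solves \eqref{nnls} off $\mathcal{B}$. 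The Schwartz decay of $q^{\mathrm{reg}}$ follows as in Theorem~\ref{th-class-glob}; the decay of $q^{\mathrm{sol}}$ at spatial infinity, for fixed $t$ such that $(\mathbb{R}\times\{t\})\cap\mathcal{B}=\emptyset$, comes from Proposition~\ref{M-fixed-k} together with the explicit exponential dependence in $g^{(m)},h^{(m)}$ off $\mathcal{B}$. Since $q$ is reconstructed via the inverse transform, $q(x,0)=q_0(x)$, which completes Definition \ref{blow-up-S}.

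For the asymptotic formula \eqref{L-sol-gen-th}, I would invoke the nonlinear steepest descent result of \cite{RS19} applied to the regular RH problem: under Assumption~2 the regularized winding integral vanishes, so $M^{\mathrm{reg}}(x,t,k_0)\to I$ as $|x|^2+|t|^2\to\infty$ along any ray, and in particular at the finite discrete points $k_0$ used in the $g^{(m)},h^{(m)}$. Substituting $M^{\mathrm{reg}}\sim I$ into the recursive formulas collapses the soliton part $q^{\mathrm{sol}}$ to the pure-multisoliton expression, which by a standard Cramer/determinant manipulation can be rewritten as the ratio $2\I\det M_1(x,t)/\det M(x,t)$ of \eqref{L-sol-gen-th}; the Schwartz remainder $q^{\mathrm{reg}}$ is $\osmall(1)$ by \cite{RS19}. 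The main obstacle I anticipate is the bookkeeping for the recursive Blaschke--Potapov step, specifically verifying that each projector $P^{(m)}$ is well defined off $\mathcal{B}$ (i.e.\,that the residue condition at the $m$-th pole pair is indeed compatible with the $g^{(m)},h^{(m)}$ manufactured from the previous step), and in the $N>0$ regime the careful tracking of which exponential dominates in each quadrant of $(x,t)$ to pin down the sector containing $\mathcal{B}$.
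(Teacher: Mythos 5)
Your plan follows essentially the same route as the paper's proof: recursive Blaschke--Potapov factors with projectors fixed by the kernel/image data $g^{(m)},h^{(m)}$, a regular RH problem made solvable by the small-norm condition on $r_j^{\mathrm{reg}}$, the decomposition $q=\tilde q^{\,\mathrm{sol}}_{2N+M}+q^{\mathrm{reg}}$ with smoothness/decay handled exactly as in the one-soliton case via the appendix propositions, and the large-$(x,t)$ asymptotics via \cite{RS19} plus a Cramer-type determinant reduction, from which the band/sector localization of $\mathcal{B}$ is deduced. The only step you compress is the passage from the recursively defined soliton part to the explicit ratio $2\I\det M_1/\det M$, which the paper carries out by the partial-fraction expansion of $B_{2N+M}^{-1}\cdots B_1^{-1}$, the rank-one structure $A_m=z_m\xi_m^{T}$, and the resulting linear system \eqref{z-syst}.
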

\begin{proof}
	%First notice that $\sigma=1$ in view of Proposition \ref{defoc-zeros}.
	To simplify notations, let us denote all zeros and norming constants (see \eqref{z-a} and \eqref{n-c}) by
	\begin{align}
	&\{k_{j,s}\}_{s=1}^{L}=\{\I\rho_{j,s}\}_{s=1}^{M_j}\cup\{\zeta_{j,s}, -\bar{\zeta}_{j,s}\}_{s=1}^{N_j},
	&&j=1,2,\\
	&\{\nu_{j,s}\}_{s=1}^{L}=\{\gamma_{j,s}\}_{s=1}^{M_j}\cup\{\eta_{j,s}, \hat\eta_{j,s}\}_{s=1}^{N_j},
	&&j=1,2.
	\end{align}
	Define $M^{\mathrm{reg}}$ in terms of the solution of the initial RH problem as follows \cite{FT} (cf.\,\,\eqref{M-reg}):
	\begin{equation}\label{M-reg-L}
	M^{\mathrm{reg}}(x,t,k)=B_1\dots B_{L}M(x,t,k)
	\begin{pmatrix}
	1& 0\\
	0& \frac{k-k_{2,L}}{k-k_{1,L}}
	\end{pmatrix}
	\dots
	\begin{pmatrix}
	1& 0\\
	0& \frac{k-k_{2,1}}{k-k_{1,1}}
	\end{pmatrix},
	\end{equation}
	where the $2\times2$ Blaschke-Potapov factors $B_m=B_m(x,t,k)$, $m=1,\dots,L$ have the form
	$$
	B_m(x,t,k)=I+\frac{k_{2,m}-k_{1,m}}{k-k_{2,m}}
	P^{(m)}(x,t),\quad m=1,\dots,L,
	$$
	with projectors $P^{(m)}(x,t)$ ($(P^{(m)})^2(x,t)=P^{(m)}(x,t)$) to be specified.
	Define $P^{(m)}(x,t)$ by
	their kernel and image (cf.\,\,\eqref{ker-im})
	\begin{equation}\label{ker-im-L}
	\ker P^{(m)}(x,t)=
	\mathrm{lin}_\mathbb{C}\{g^{(m)}(x,t)\}, \quad
	\Im P^{(m)}(x,t)=
	\mathrm{lin}_\mathbb{C}\{h^{(m)}(x,t)\},
	\end{equation}
	where the 2-vectors $g^{(m)}$
	and $h^{(m)}$ are obtained recursively by
	(we adopt the notation $\prod\limits_{s=n_1}^{n_2}(\cdot)_s=1$ if $n_2<n_1$):
	\begin{subequations}\label{g-j-h-j-L}
		\begin{align}
		\nonumber
		g^{(m)}=&\,(k_{1,m}-k_{2,m})
		(M^{\mathrm{reg}}_{m-1})^{[1]}(x,t,k_{1,m})\\
		&-\frac{\nu_{1,m}
			\prod\limits_{s=m+1}^{L}
			\frac{k_{1,m}-k_{1,s}}{k_{1,m}-k_{2,s}}}
		{\dot{a}_1(k_{1,m})}
		e^{2\I k_{1,m}x+4\I k_{1,m}^2t}
		(M^{\mathrm{reg}}_{m-1})^{[2]}(x,t,k_{1,m}),\\
		\nonumber
		h^{(m)}=&\,(k_{1,m}-k_{2,m})
		(M^{\mathrm{reg}}_{m-1})^{[2]}(x,t,k_{2,m})\\
		&+\frac{\nu_{2,m}
			\prod\limits_{s=m+1}^{L}
			\frac{k_{2,m}-k_{2,s}}{k_{2,m}-k_{1,s}}}
		{\dot{a}_2(k_{2,m})}
		e^{-2\I k_{2,m}x-4\I k_{2,m}^2t}
		(M^{\mathrm{reg}}_{m-1})^{[1]}(x,t,k_{2,m}),
		\end{align}
	\end{subequations}
	with the following $M^{\mathrm{reg}}_{m-1}(x,t,k)$
	($M^{\mathrm{reg}}_0:=M^{\mathrm{reg}}$)
	\begin{equation}
	\begin{split}
	M_{m-1}^{\mathrm{reg}}(x,t,k)=&\,
	B_{m-1}^{-1}(x,t,k)\dots B_{1}^{-1}\\
	&\times M^{\mathrm{reg}}(x,t,k)
	\begin{pmatrix}
	1&0\\
	0 &\frac{k-k_{1,1}}{k-k_{2,1}}
	\end{pmatrix}
	\dots
	\begin{pmatrix}
	1&0\\
	0 &\frac{k-k_{1,m-1}}{k-k_{2,m-1}}
	\end{pmatrix}.
	\end{split}
	\end{equation}
	Then direct calculations show that
	$M^{\mathrm{reg}}(x,t,k)$ solves a regular RH problem \eqref{M-reg-RH} with $r_j^{\mathrm{reg}}$ defined in condition 2 of the Theorem.
	In view of the assumption $\|r_j^{\mathrm{reg}}\|_{L^\infty}\break <1$,
	this RH problem has a solution.
	Notice that the entries of $P^{(m)}$, $m=1,\dots, L$ can be found by (cf.\,\,\eqref{P-i-j}):
	\begin{equation}\label{P-i-j-L}
	\begin{split}
	&P_{12}^{(m)}=\frac{g_1^{(m)}h_1^{(m)}}
	{g_1^{(m)}h_2^{(m)}-g_2^{(m)}h_1^{(m)}},\,\,
	P_{21}^{(m)}=\frac{-g_2^{(m)}h_2^{(m)}}
	{g_1^{(m)}h_2^{(m)}-g_2^{(m)}h_1^{(m)}},\\
	&P_{11}^{(m)}=-\frac{P_{12}^{(m)}g_2^{(m)}}
	{g_1^{(m)}},\,\,
	P_{22}^{(m)}=-\frac{P_{21}^{(m)}g_1^{(m)}}
	{g_2^{(m)}}.
	\end{split}
	\end{equation}
	
	From \eqref{q(x,t)} and \eqref{M-reg-L} we conclude that  $q(x,t)$ can be determined in terms of $M^{\mathrm{reg}}$ and $P^{(m)}$ as follows:
	\begin{equation}\label{q-qsol-qreg-L}
	q(x,t)=\tilde{q}^{\mathrm{sol}}_{L}(x,t)
	+q^{\mathrm{reg}}(x,t),
	%-2(\rho_{1,1}-\rho_{2,1})P_{12}(x,t)
	%+2\I\lim\limits_{k\to\infty}kM_{12}^{\mathrm{reg}}(x,t,k).
	\end{equation}
	where
	\begin{subequations}\label{q-L-sol-qreg}
		\begin{align}
		\label{q-L-sol}
		&\tilde{q}^{\,\mathrm{sol}}_{L}(x,t)=
		\sum\limits_{m=1}^{L}
		2\I(k_{1,m}-k_{2,m})P_{12}^{(m)}(x,t),\\
		&q^{\mathrm{reg}}(x,t)=
		2\I\lim\limits_{k\to\infty}kM_{12}^{\mathrm{reg}}(x,t,k).
		\end{align}
	\end{subequations}
	Now the fact that $q(x,t)$ is a blowing-up Schwartz solution with blow-up points in $\mathcal{B}$ follows from the same arguments as in Theorem \ref{th-class-one-sol}.
	
	Let us show the asymptotics \eqref{L-sol-gen-th}.
	For large $x$ and $t$ we have $M^{\mathrm{reg}}=I+\osmall(1)$, which implies that
	%$q(x,t)$ can be found as follows
	\begin{equation}\label{q-as-L}
	q(x,t)=2\I\lim\limits_{k\to\infty}
	M_{12}^{\mathrm{as}}(x,t,k)+\osmall(1), \quad
	x,t\to\infty,
	\end{equation}
	where $M^{\mathrm{as}}$ satisfies the equality
	(cf.\,\,\eqref{M-reg-L})
	\begin{equation}\label{M-as-L}
	I=B_1\dots B_{L}M^{\mathrm{as}}(x,t,k)
	\begin{pmatrix}
	1& 0\\
	0& \frac{k-k_{2,L}}{k-k_{1,L}}
	\end{pmatrix}
	\dots
	\begin{pmatrix}
	1& 0\\
	0& \frac{k-k_{2,1}}{k-k_{1,1}}
	\end{pmatrix}.
	\end{equation}
	
	Instead of using the representation \eqref{q-L-sol}, which involves recursively determined terms, introduce the partial fraction expansion of
	$B_{L}^{-1}\cdots B_1^{-1}$ (see \cite{FT}):
	\begin{equation}\label{B-A-s}
	B_{L}^{-1}(x,t,k)\cdots B_1^{-1}(x,t,k)
	=I+\sum\limits_{m=1}^{L}
	\frac{A_m(x,t)}{k-k_{1,m}}.
	\end{equation}
	Then from \eqref{q-as-L} and \eqref{B-A-s} we have
	\begin{equation}\label{q-A-s}
	q(x,t)=2\I\sum\limits_{m=1}^{L}(A_{m})_{12}(x,t)
	+\osmall(1),\quad x,t,\to\infty.
	\end{equation}
	
	In order to determine $A_m$, we first prove that it can be represented as follows:
	\begin{equation}\label{A-s}
	A_m(x,t)=z_m(x,t)\xi_m^{T}(x,t),\quad m=1,\dots, L,
	\end{equation}
	with some $2$-vector $z_m(x,t)$, where  the  raw vector $\xi_m^T(x,t)$
	is given by
	\begin{equation}\label{xi-s}
	\begin{split}
	\xi_m^T(x,t)&=\left(
	\frac{\nu_{1,m}
		\prod\limits_{s\neq m,\,s=1}^{L}
		\frac{k_{1,m}-k_{1,s}}{k_{1,m}-k_{2,s}}}
	{(k_{1,m}-k_{2,m})\dot{a}_1(k_{1,m})}
	e^{2\I k_{1,m}x+4\I k^2_{1,m}t},1\right)\\
	&\equiv (\xi_{m,1}(x,t),1),
	\quad m=1,\dots,L.
	\end{split}
	\end{equation}
	Indeed, taking into account that $B_1\dots B_{L}=C_m(x,t)+\ord(k-k_{1,m})$ as $k\to k_{1,m}$, we conclude that $A_mC_m=C_mA_m=0$.
	Notice that since $\ker C_m(x,t)=\mathrm{lin}_{\mathbb{C}}
	\{\left(M^{\mathrm{as}}\right)^{[2]}(x,t,k_{1,m})\}$, it follows that $\mathrm{rang}(C_m)=1$.
	Moreover, from
	\begin{subequations}
		\begin{align}
		&\frac{A_m^{[1]}(x,t)}{k-k_{1,m}}\sim
		\frac{\nu_{1,m}}{\dot{a}_1(k_{1,m})}
		e^{2\I k_{1,m}x+4\I k^2_{1,m}t}
		\frac{\left(M^{\mathrm{as}}\right)^{[2]}(x,t,k_{1,m})}
		{k-k_{1,m}},&& k\to k_{1,m},\\
		&\frac{A_m^{[2]}(x,t)}{k-k_{1,m}}\sim
		(k_{1,m}-k_{2,m})
		\left(\prod\limits_{s\neq m,s=1}^{L}
		\frac{k_{1,m}-k_{2,s}}{k_{1,m}-k_{1,s}}\right)
		\frac{\left(M^{\mathrm{as}}\right)^{[2]}(x,t,k_{1,m})}
		{k-k_{1,m}},&& k\to k_{1,m},
		\end{align}
	\end{subequations}
	one concludes that
	$A_m^{[1]}(x,t)=A_m^{[2]}(x,t)\xi_{m,1}(x,t)$,
	where $\xi_{m,1}(x,t)$ is defined by  \eqref{xi-s}.
	This and the fact that $\mathrm{rang}(C_m)=1$ yield the  representation \eqref{A-s}.
	
	Now we introduce the $2$-vector $\eta_m(x,t)$ by
	\begin{equation}\label{nu-m}
	\begin{split}
	\eta_m(x,t)&=
	\begin{pmatrix}
	\frac{-\nu_{2,m}
		\left(\prod\limits_{s\neq m,s=1}^{L}
		\frac{k_{2,m}-k_{2,s}}{k_{2,m}-k_{1,s}}
		\right)}
	{(k_{2,m}-k_{1,m})\dot{a}_2(k_{2,m})}
	e^{-2\I k_{2,m}x-4\I k^2_{2,m}t}\\
	1
	\end{pmatrix}\\
	&\equiv
	\begin{pmatrix}
	\eta_{m,1}(x,t)\\
	1
	\end{pmatrix},
	\quad m=1,\dots,L.
	\end{split}
	\end{equation}
	Substituting \eqref{B-A-s} in \eqref{M-as-L} with $k=k_{2,m}$ and multiplying by $\eta_m(x,t)$ on the right, we arrive at the following system of linear algebraic equations
	\begin{equation}\label{z-syst}
	\eta_m(x,t)+\sum\limits_{s=1}^{L}
	\frac{\xi_s^T(x,t)\eta_m(x,t)}{k_{2,m}-k_{1,s}}
	z_m(x,t)=0,\quad m=1,\dots,L.
	\end{equation}
	Notice that from \eqref{A-s} it follows that $\left(A_{m}\right)_{12}=z_{m,1}$, where
	$z_m=\left(\begin{smallmatrix}
	z_{m,1}\\
	z_{m,2}
	\end{smallmatrix}\right)$.
	Therefore, using Cramer's rule, from \eqref{q-A-s} and \eqref{z-syst} we have that
	\begin{equation}
	q(x,t)=2\I\frac{\det M_1(x,t)}{\det M(x,t)}+\osmall(1),\quad x,t\to\infty,
	\end{equation}
	where the $(ms)$ element of $M$ is as follows:
	\begin{equation}\label{M-xi-eta}
	M_{ms}(x,t)=\frac{\xi_s^T(x,t)\eta_m(x,t)}{k_{2,m}-k_{1,s}},\quad m,s=1,\dots,L,
	\end{equation}
	and
	\begin{equation}\label{M-1-xi-eta}
	M_1(x,t)=
	\left(
	\begin{array}{@{}c|c@{}}
	M(x,t) &
	\begin{matrix}
	-\eta_{1,1}(x,t)\\
	\dots\\
	-\eta_{L,1}(x,t)
	\end{matrix}\\
	\hline
	\begin{matrix}
	1&\dots&1
	\end{matrix}
	&0
	\end{array}
	\right),
	\end{equation}
	where $\eta_{m,1}$ is given by \eqref{nu-m}.
	
	Finally, since the functions $g_1^{(m)}(x,t)h_2^{(m)}(x,t)-g_2^{(m)}(x,t)h_1^{(m)}(x,t)$, $m=1,\dots,L$ are continuous, the set $\mathcal{B}$ is closed.
	Moreover, taking into account that $\frac{\det{M_1(x,t)}}{\det M(x,t)}\to 0$ as $x,t\to\infty$ along any ray $\xi=\frac{x}{4t}=const$ such that $|\xi|>\xi_0$, where $\xi_0=\max\limits_{j,m}|\Re k_{j,m}|$, one concludes that (i) if $\xi_0=0$ (which is equivalent to $N=0$), then $q(x,t)$ does not blow-up outside the band $\{(x,t)\in\mathbb{R}^2:|x|<R\}$, while (ii) for $\xi_0>0$, $q(x,t)$ does not blow-up outside the sector
	$\{(x,t)\in\mathbb{R}^2:|t|>\alpha|x|-C\}$, where $0<\alpha<\frac{1}{4\xi_0}$ and $C>0$.
\end{proof}
\begin{remark}
	(Concluding remarks to Theorem \ref{th-class-L-sol})\label{concl-th-1}
	\begin{enumerate}[(i)]
		\item
		The blowing-up Schwartz solution derived in Theorem is a conservative solution.
		More precisely, for any $t\in\mathbb{R}$ such that $(\mathbb{R}\times\{t\})\cap\mathcal{B}=\emptyset$, we have that all quantities
		$I_n(t)=\int_{-\infty}^{+\infty}p_n(x,t)\,dx$, $n\in\mathbb{N}$, see \eqref{p-n}, are constants: $I_n(t)=I_n(0)$ for all $n\in\mathbb{N}$ (particularly, $\tilde{M}$ and $\tilde{E}$ defined by \eqref{cons-mass} and \eqref{cons-energy} respectively are conserved).
		Indeed,   $q(\cdot,t)\in\mathcal{S}(\mathbb{R})$
		for such $t$, and since it is obtained by the inverse transform \eqref{q(x,t)}, $q(\cdot,t)$ gives rise to the same spectral functions $a_j(k)$ as the initial data $q_0(x)$.
		Therefore, the values of $I_n(t)$ defined by the expansion of $\ln a_1(k)$ for large $k$, see \eqref{In-a1}, are the same as $I_n(0)$.
		%Indeed, for a such $t$ the solution $q(\cdot,t)$ obtained by the inverse transform \eqref{q(x,t)} belongs to $\mathcal{S}(\mathbb{R})$.
		%	So, we can apply the direct transform to $q(\cdot,t)$, %which gives rise to the same spectral functions $a_j$, $j=1,2$
		%	obtain the Jost solutions $\Psi_j$, $j=1,2$ and
		%	then arguing as in \eqref{ricc}--\eqref{In-a1} we conclude that $I_n(t)$ are defined in terms of the spectral function $\ln a_1(k)$, which is the same as for the potential $q_0(x)$.
		\item
		Since $\mathcal{B}$ is closed, there exists $\varepsilon=\varepsilon(q_0)>0$ such that
		$\mathcal{B}\cap\{(x,t)\in\mathbb{R}^2:|t|<\varepsilon\}=\emptyset$ (see also Remark \ref{char-B} below about the description of the set $\mathcal{B}$).
		Moreover, the blowing-up Schwartz solutions constructed in Theorem \ref{th-class-L-sol} coincide with that obtained in Theorem \ref{th-class-glob} on the maximal interval $(-T_{1,\max}, T_{2,\max})$, $T_{j,\max}>0$
		(by a maximal interval, we mean that $\left(\mathbb{R}\times(-T_{1,\max}, T_{2,\max})\right)\cap\mathcal{B}=\emptyset$
		and there exist $x_j\in\mathbb{R}$ such as $(x_j, T_{j,\max})\in\mathcal{B}$, $j=1,2$).
		\item
		The exact $L$ soliton solutions correspond to the reflectionless case (i.e., $r_j(k)\equiv0$, $j=1,2$).
		So, the class of initial data described in spectral terms in Theorem \ref{th-class-L-sol} contains, in particular, small $L^1$ perturbations of smooth soliton initial profiles (see Proposition \ref{prop-suff-L-sol} below for details).
		\item
		The terms $q^{\mathrm{reg}}$ and $\tilde{q}_{L}^{\mathrm{sol}}$ in \eqref{q-L-sol-qreg} express, respectively, the
		smooth dispersion part and the soliton part of the solution, where the latter bares all possible blow-up points of the solution.
	\end{enumerate}
\end{remark}
We close this Section by giving a sufficient condition %written in terms of $q_0(x)$,
in the spirit of Proposition \ref{prop-suff} above,
which guarantees that the conditions 1 and 2 in Theorem \ref{th-class-L-sol} are fulfilled.
%(cf. Proposition \ref{prop-suff} above).
\begin{proposition}\label{prop-suff-L-sol}
	(Sufficient condition for Items 1,2 in Theorem \ref{th-class-L-sol}).
	Let
	\begin{subequations}
		\begin{align}
		&\alpha_1(k):=
		\prod\limits_{s=1}^{M_1}
		\frac{k-\I\rho_{1,s}}{k-\I\rho_{2,s}}
		\prod\limits_{s=1}^{N_1}
		\frac{(k-\zeta_{1,s})(k+\bar\zeta_{1,s})}
		{(k-\zeta_{2,s})(k+\bar\zeta_{2,s})},\\
		&\alpha_2(k):=
		\prod\limits_{s=1}^{M_2}
		\frac{k-\I\rho_{2,s}}{k-\I\rho_{1,s}}
		\prod\limits_{s=1}^{N_2}
		\frac{(k-\zeta_{2,s})(k+\bar\zeta_{2,s})}
		{(k-\zeta_{1,s})(k+\bar\zeta_{1,s})},
		\end{align}
	\end{subequations}
	and let  $d_j$, $j=1,2$
	be the positive constants
	defined
	by
	\begin{equation}\label{d-j}
	d_j=\inf\limits
	_{k\in\mathbb{R}\cup\mathcal{D}_j}
	\{|\alpha_j(k)|\},\quad j=1,2,
	\end{equation}
	where $\mathcal{D}_1=
	\{k\in\mathbb{C}^+:\alpha_1^\prime(k)=0\}$
	and
	$\mathcal{D}_2=
	\{k\in\mathbb{C}^-:\alpha_2^\prime(k)=0\}$.
	Denote by $q_{L}^{\mathrm{sol}}(x,t)$  the exact soliton solution characterized by the discrete spectrum $\mathcal{Z}=\bigcup\limits_{j=1}^{2}
	\{\I\rho_{j,s}\}_{s=1}^{M_j}
	\cup\{\zeta_{j,s}, -\bar{\zeta}_{j,s}\}_{s=1}^{N_j}$ and the norming constants\\
	$\mathcal{N}(t)=\bigcup\limits_{j=1}^{2}
	\{\gamma_{j,s}e^{(-1)^j4\I\rho_{j,s}^2t}\}
	_{s=1}^{M_j}
	\cup\{\eta_{j,s}e^{(-1)^{j+1}4\I\zeta_{j,s}^2t}\}_{s=1}^{N_j}$:
	\begin{equation}
	q_{L}^{\mathrm{sol}}(x,t)=
	2\I\frac{\det M_1(x,t)}{\det M(x,t)},
	\end{equation}
	where $M(x,t)$ and $M_1(x,t)$ are given by \eqref{M-xi-eta} and \eqref{M-1-xi-eta} respectively with $a_j(k)=\alpha_j(k)$.

	Assume that  $q_0(x)\in\mathcal{S}(\mathbb{R})$ satisfies the following inequality
	\begin{equation}\label{suff-cond-L-sol}
	\|q_0-q_{0,L}^{\mathrm{sol}}\|
	_{L^1(\mathbb{R})}
	\left\{
	C_1^{\infty}
	\left(\|q_0\|_{L^1(\mathbb{R})}
	+\|q_{0,L}^{\mathrm{sol}}\|
	_{L^1(\mathbb{R})}\right)
	+C_2^{\infty}
	I_0\left(2\|q_{0,L}^{\mathrm{sol}}\|
	_{L^1(\mathbb{R})}\right)
	\right\}
	<d,
	\end{equation}
	where $q_{0,L}^{\mathrm{sol}}(x)=
	q_{L}^{\mathrm{sol}}(x,0)$, $I_0(\cdot)$ is a modified Bessel function, $d=\min\{d_1,d_2\}$ and
	$C_j^{\infty}$ are as follows
	\begin{align}
	\label{C-1-infty}
	&C_1^{\infty}=
	\sum\limits_{n=1}^{\infty}
	\sum\limits_{m=0}^{n-1}
	\left(
	\frac{\|q_0\|_{L^1(\mathbb{R})}^{2m}}
	{(m!)^2}\cdot
	\frac{\|q_{0,L}^{\mathrm{sol}}\|
		_{L^1(\mathbb{R})}^{2(n-m-1)}}
	{((n-m-1)!)^2}
	\right)
	,\\
	\nonumber
	&C_2^{\infty}=1
	+\sum\limits_{n=1}^{\infty}
	\left\{
	\sum\limits_{m=0}^{n-1}
	\left(
	\frac{\|q_0\|_{L^1(\mathbb{R})}^{2m}}
	{(m!)^2}\cdot
	\frac{\|q_{0,L}^{\mathrm{sol}}\|
		_{L^1(\mathbb{R})}^{2(n-m)}}
	{((n-m)!)^2}
	+\frac{\|q_0\|_{L^1(\mathbb{R})}^{2m+1}}
	{(m!)^2}\cdot
	\frac{\|q_{0,L}^{\mathrm{sol}}\|
		_{L^1(\mathbb{R})}^{2(n-m)-1}}
	{((n-m-1)!)^2}
	\right)\right.\\
	\label{C-2-infty}
	&
	\left.\qquad\qquad\qquad\,\,\,\,
	+\frac{\|q_0\|_{L^1(\mathbb{R})}^{2n}}{(n!)^2}
	\right\}.
	\end{align}
	Then conditions 1 and 2 in Theorem \ref{th-class-L-sol} hold true.
\end{proposition}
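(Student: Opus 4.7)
The strategy is a perturbation argument around the pure-soliton initial profile $q_{0,2N+M}^{\mathrm{sol}}$, following the template of Proposition \ref{prop-suff}. Writing $q_0=q_{0,2N+M}^{\mathrm{sol}}+\delta q_0$ with $\delta q_0:=q_0-q_{0,2N+M}^{\mathrm{sol}}$ small in $L^1$, I would control the dependence of the spectral data $a_j$ and $b$ on the perturbation by analyzing the Volterra integral equations \eqref{psi-1-3}--\eqref{psi24} at $t=0$, whose large-$x$ limits produce $a_j$ and $b$ via \eqref{ab-lim}.

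The central step is to expand $\psi_1(x,0,k)$ as a Neumann series from the iterated Volterra form \eqref{psi-1-V} and, in every factor of $q_0$ or $\overline{q_0(-\cdot)}$ appearing at the $n$-th level, substitute $q_0=q_{0,2N+M}^{\mathrm{sol}}+\delta q_0$. The subseries in which only $q_{0,2N+M}^{\mathrm{sol}}$ appears sums, upon $x\to+\infty$, to the pure-soliton spectral function $a_j^{\mathrm{sol}}(k)=\alpha_j(k)$, while $b^{\mathrm{sol}}(k)\equiv 0$ since $q_{2N+M}^{\mathrm{sol}}$ is reflectionless. All other terms carry at least one factor of $\delta q_0$; using $|e^{2\I k(z-y)}|\le 1$ for $k\in\overline{\mathbb{C}^+}$, each such term admits an absolute bound obtained by replacing every ordinary factor of $q_0$ or $q_{0,2N+M}^{\mathrm{sol}}$ by its $L^1$ norm and pulling a single $\|\delta q_0\|_{L^1}$ out of the distinguished slot. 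Grouping the resulting bounds by the number $m$ of $q_0$-factors and $n-m-1$ of $q_{0,2N+M}^{\mathrm{sol}}$-factors, and summing in $n$, reproduces the Bessel-type sums defining $C_1^\infty$ and $C_2^\infty$; the $(m!)^2$ denominators are already familiar from \eqref{N-ser-F-1}. This yields estimates
\[
|a_j(k)-\alpha_j(k)|\le C_2^\infty I_0\bigl(2\|q_{0,2N+M}^{\mathrm{sol}}\|_{L^1}\bigr)\,\|\delta q_0\|_{L^1},\qquad
|b(k)|\le C_1^\infty\bigl(\|q_0\|_{L^1}+\|q_{0,2N+M}^{\mathrm{sol}}\|_{L^1}\bigr)\,\|\delta q_0\|_{L^1},
\]
valid for $a_1$ on $\overline{\mathbb{C}^+}$, for $a_2$ on $\overline{\mathbb{C}^-}$, and for $b$ on $\mathbb{R}$.

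Given these inequalities, both items of Theorem \ref{th-class-L-sol} follow. For item $1$, Rouché's theorem applied to $a_j=\alpha_j+(a_j-\alpha_j)$ on small contours enclosing each zero of $\alpha_j$ in $\mathbb{C}^\pm$ yields the same count and simplicity of zeros, and real zeros of $a_j$ are excluded, provided $|a_j-\alpha_j|<|\alpha_j|$ on those contours and on $\mathbb{R}$. The definition \eqref{d-j} of $d_j$ is engineered exactly so that $|\alpha_j(k)|\ge d_j$ both on $\mathbb{R}$ and along contours reaching to the critical set $\mathcal{D}_j$; hence \eqref{suff-cond-L-sol} guarantees the Rouché inequality. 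For item $2$, using $r_j^{\mathrm{reg}}(k)=\alpha_j(k)b(k)/a_j(k)$, the lower bound $|a_j(k)|\ge|\alpha_j(k)|-C_2^\infty I_0(2\|q_{0,2N+M}^{\mathrm{sol}}\|_{L^1})\|\delta q_0\|_{L^1}\ge d_j-C_2^\infty I_0(2\|q_{0,2N+M}^{\mathrm{sol}}\|_{L^1})\|\delta q_0\|_{L^1}$ on $\mathbb{R}$, combined with the $|b|$ bound, renders $\|r_j^{\mathrm{reg}}\|_{L^\infty(\mathbb{R})}<1$ precisely under \eqref{suff-cond-L-sol}.

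The main technical obstacle is the combinatorial bookkeeping for the Neumann expansion: tracking which cross-terms produce which monomial $\|q_0\|_{L^1}^{\alpha}\|q_{0,2N+M}^{\mathrm{sol}}\|_{L^1}^{\beta}$ and recognising that the resulting double sums collapse exactly to the combinations $C_1^\infty(\|q_0\|_{L^1}+\|q_{0,2N+M}^{\mathrm{sol}}\|_{L^1})$ and $C_2^\infty I_0(2\|q_{0,2N+M}^{\mathrm{sol}}\|_{L^1})$ appearing in \eqref{suff-cond-L-sol}. A secondary subtlety concerns the critical set $\mathcal{D}_j$: one must verify that the infimum $d_j$ over $\mathbb{R}\cup\mathcal{D}_j$ in fact bounds $|\alpha_j|$ from below on the exact contours used for the Rouché step, a maximum-modulus-type consideration since $\alpha_j^{-1}$ has its poles at the zeros of $\alpha_j$.
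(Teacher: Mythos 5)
Your overall strategy is the paper's: perturb the Volterra/Neumann representation \eqref{psi-1-3}, \eqref{ab-lim} around the reflectionless soliton potential, localize the zeros of $a_j$ near those of $\alpha_j$ using the constant $d_j$ of \eqref{d-j}, and bound $r_j^{\mathrm{reg}}=\alpha_j b/a_j$. However, there is a concrete error in how you assign the constants \eqref{C-1-infty}--\eqref{C-2-infty}, and with your assignment the argument does not close. The combinatorics of the two Neumann series give the estimates the other way around: the series for $\psi_1$ (hence for $a_1-\alpha_1$) is built from \emph{pair} kernels $q_0(\cdot)\overline{q_0(-\cdot)}$, so telescoping the difference of products extracts one factor $\|q_0-q^{\mathrm{sol}}_{0,2N+M}\|_{L^1}\bigl(\|q_0\|_{L^1}+\|q^{\mathrm{sol}}_{0,2N+M}\|_{L^1}\bigr)$ and leaves the sum $C_1^\infty$, i.e.\ $|a_j-\alpha_j|\le \|q_0-q^{\mathrm{sol}}_{0,2N+M}\|_{L^1}(\|q_0\|_{L^1}+\|q^{\mathrm{sol}}_{0,2N+M}\|_{L^1})C_1^\infty$; whereas the series for $\psi_3$ (hence for $b-0$) starts with the inhomogeneous term $-\sigma\int e^{2\I k(x-y)}\overline{q_0(-y)}\,dy$, whose difference is bounded by $\|q_0-q^{\mathrm{sol}}_{0,2N+M}\|_{L^1}$ alone — this is exactly the leading ``$1+$'' in \eqref{C-2-infty} — giving $|b|\le \|q_0-q^{\mathrm{sol}}_{0,2N+M}\|_{L^1}C_2^\infty$. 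Your claimed bound $|b|\le C_1^\infty(\|q_0\|_{L^1}+\|q^{\mathrm{sol}}_{0,2N+M}\|_{L^1})\|q_0-q^{\mathrm{sol}}_{0,2N+M}\|_{L^1}$ cannot come out of the bookkeeping you describe: when both $L^1$ norms are small the first-order term of $b$ is of size comparable to $\|q_0-q^{\mathrm{sol}}_{0,2N+M}\|_{L^1}$ with no small prefactor, so a bound carrying the extra factor $\|q_0\|_{L^1}+\|q^{\mathrm{sol}}_{0,2N+M}\|_{L^1}$ is simply false in general.

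This misassignment also breaks the final step. Even granting your two inequalities, your estimate for $r_j^{\mathrm{reg}}$ becomes, with $I_0:=I_0(2\|q^{\mathrm{sol}}_{0,2N+M}\|_{L^1})$,
\begin{equation*}
|r_j^{\mathrm{reg}}(k)|\le \frac{I_0\,C_1^\infty(\|q_0\|_{L^1}+\|q^{\mathrm{sol}}_{0,2N+M}\|_{L^1})\,\|q_0-q^{\mathrm{sol}}_{0,2N+M}\|_{L^1}}{d_j-C_2^\infty I_0\,\|q_0-q^{\mathrm{sol}}_{0,2N+M}\|_{L^1}},
\end{equation*}
and making this $<1$ requires $\|q_0-q^{\mathrm{sol}}_{0,2N+M}\|_{L^1}\{C_1^\infty(\|q_0\|_{L^1}+\|q^{\mathrm{sol}}_{0,2N+M}\|_{L^1})I_0+C_2^\infty I_0\}<d_j$, which is strictly stronger than the hypothesis \eqref{suff-cond-L-sol} (by the factor $I_0\ge1$ on the first term), so the conclusion does not follow ``precisely under \eqref{suff-cond-L-sol}'' as you assert. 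With the constants assigned as above (as in the paper), everything closes exactly: the factor $I_0$ enters only as the bound for $|\alpha_j(k)|$ in the numerator, giving $|r_j^{\mathrm{reg}}|\le \|q_0-q^{\mathrm{sol}}_{0,2N+M}\|_{L^1}C_2^\infty I_0/\bigl(d_j-\|q_0-q^{\mathrm{sol}}_{0,2N+M}\|_{L^1}(\|q_0\|_{L^1}+\|q^{\mathrm{sol}}_{0,2N+M}\|_{L^1})C_1^\infty\bigr)<1$, while the zero-location statement uses only $\|q_0-q^{\mathrm{sol}}_{0,2N+M}\|_{L^1}(\|q_0\|_{L^1}+\|q^{\mathrm{sol}}_{0,2N+M}\|_{L^1})C_1^\infty<d_j$. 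Your remark on justifying that $d_j$ controls $|\alpha_j|$ on the contours used for the zero count (the role of the critical set $\mathcal{D}_j$) is a fair point — the paper is terse there too — but the substantive defect to repair is the swapped estimates.
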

\begin{proof}
	We prove the Proposition for $a_1(k)$ and $r_1(k)$, the proof for $a_2(k)$ and $r_2(k)$ is similar.
	Recall that $a_1(k)$ and $\alpha_1(k)$ can be found as follows (see \eqref{ab-lim})
	\begin{equation}\label{a-1-alpha-1}
	a_{1}(k)=\lim\limits_{x\rightarrow+\infty}
	\psi_1(x,0,k),\quad
	\alpha_{1}(k)=\lim\limits_{x\rightarrow+\infty}
	\psi_1^{\mathrm{sol}}(x,0,k),
	\end{equation}
	where $\psi_1(x,0,k)$ and $\psi_1^{\mathrm{sol}}(x,0,k)$ satisfy the following Volterra integral equations (cf.\,\,\eqref{psi-1-V})
	\begin{subequations}
		\begin{align}
		&\psi_1(x,0,k)=1-\sigma\int_{-\infty}^{x}
		\overline{q_{0}(-y)}
		\int_y^x e^{2\I k(z-y)}
		q_0(z)\,dz\,\psi_1(y,0,k)\,dy,\\
		&\psi_1^{\mathrm{sol}}(x,0,k)
		=1-\sigma\int_{-\infty}^{x}
		\overline{q_{0,L}^{\mathrm{sol}}(-y)}
		\int_y^x e^{2\I k(z-y)}
		q_{0,L}^{\mathrm{sol}}(z)\,dz\,
		\psi_1^{\mathrm{sol}}(y,0,k)\,dy.
		\end{align}
	\end{subequations}
	Taking into account that (here we adopt notation
	$\prod\limits_{i=i_1}^{i_2}(\cdot)=1$ if $i_1>i_2$)
	\begin{equation}
	\prod\limits_{i=1}^{n}u_i
	-\prod\limits_{i=1}^{n}w_i=
	\sum
	\limits_{j=1}^{n}
	\left(
	\prod\limits_{i=1}^{j-1}u_i\cdot
	(u_j-w_j)\cdot
	\prod\limits_{i=j+1}^{n}w_i
	\right),
	\end{equation}
	for any $u_i,w_i\in\mathbb{C}$, from the Neumann series for $\psi_1$ and $\psi_1^{\mathrm{sol}}$ we obtain the following estimate for $k\in\overline{\mathbb{C}^+}$ and
	$x\in\mathbb{R}$:
	\begin{equation}\label{psi-1-psi-sol-1}
	|\psi_1(x,0,k)-\psi_1^{\mathrm{sol}}(x,0,k)|\leq
	\|q_0-q_{0,L}^{\mathrm{sol}}\|_{L^1}
	(\|q_0\|_{L^1}+\|q_{0,L}^{\mathrm{sol}}\|_{L^1})
	C_1(x),
	\end{equation}
	where
	\begin{equation}
	C_1(x)=
	\sum\limits_{n=1}^{\infty}
	\sum\limits_{m=0}^{n-1}
	\left(
	\frac{Q_0^m(x)R_0^m(x)}
	{(m!)^2}\cdot
	\frac{(Q_0^{\mathrm{sol}}(x))^{n-m-1}
		(R_0^{\mathrm{sol}}(x))^{n-m-1}}
	{((n-m-1)!)^2}
	\right),
	\end{equation}
	with
	\begin{align}
	\nonumber
	&Q_0(x)=\int_{-\infty}^{x}|q_0(y)|\,dy,&& R_0(x)=\int_{-\infty}^{x}|q_{0}(-y)|\,dy,\\
	\nonumber
	&Q_0^{\mathrm{sol}}(x)=\int_{-\infty}^{x}
	|q_{0,L}^{\mathrm{sol}}(y)|\,dy,&& R_0^{\mathrm{sol}}(x)=\int_{-\infty}^{x}
	|q_{0,L}^{\mathrm{sol}}(-y)|\,dy.
	\end{align}
	Notice that
	$C_1^{\infty}=\lim\limits_{x\to+\infty}C_1(x)$, see \eqref{C-1-infty}.
	From \eqref{a-1-alpha-1} we have that
	$a_1(k)=\alpha_1(k)+
	\lim\limits_{x\to+\infty}
	(\psi_1(x,0,k)-\psi_1^{\mathrm{sol}}(x,0,k))$,
	which implies that if
	\begin{equation}\label{zer-a-1}
	\|q_0-q_{0,L}^{\mathrm{sol}}\|_{L^1}
	(\|q_0\|_{L^1}+\|q_{0,L}^{\mathrm{sol}}\|_{L^1})
	C_1^{\infty}<d_1,
	\end{equation}
	where $d_1$ is given by \eqref{d-j}, then the only zeros of $a_1(k)$ are at the points $\{\I\rho_{1,s}\}_{s=1}^{M_1}
	\cup\{\zeta_{1,s}, -\bar{\zeta}_{1,s}\}_{s=1}^{N_1}$.
	Arguing similarly for $a_2(k)$ we conclude that if
	\begin{equation}\label{zer-a-2}
	\|q_0-q_{0,L}^{\mathrm{sol}}\|_{L^1}
	(\|q_0\|_{L^1}+\|q_{0,L}^{\mathrm{sol}}\|_{L^1})
	C_1^{\infty}<d_2,
	\end{equation}
	then the only zeros of $a_2(k)$ are at the points $\{\I\rho_{2,s}\}_{s=1}^{M_2}
	\cup\{\zeta_{2,s}, -\bar{\zeta}_{2,s}\}_{s=1}^{N_2}$.
	
	Since soliton solution $q_{L}^{\mathrm{sol}}(x,t)$ corresponds to reflectionless potential, we have
	\begin{equation}
	b(k)=\lim\limits_{x\rightarrow+\infty}
	e^{-2\I kx}\psi_3(x,0,k),\quad
	0=\lim\limits_{x\rightarrow+\infty}
	e^{-2\I kx}\psi_3^{\mathrm{sol}}(x,0,k),
	\end{equation}
	where $\psi_3(x,0,k)$ and $\psi_3^{\mathrm{sol}}(x,0,k)$ satisfy the following Volterra integral equations (see \eqref{psi-1-3})
	\begin{subequations}
		\begin{align}
		\nonumber
		&\psi_3(x,0,k)=
		-\sigma\int_{-\infty}^{x}e^{2\I k(x-y)}
		\overline{q_{0}(-y)}\,dy\\
		&\qquad\qquad\qquad
		-\sigma\int_{-\infty}^{x}
		q_{0}(y)
		\int_y^x e^{2\I k(x-z)}
		\overline{q_0(-z)}\,dz\,\psi_3(y,0,k)\,dy,\\
		\nonumber
		&\psi_3^{\mathrm{sol}}(x,0,k)
		=
		-\sigma\int_{-\infty}^{x}e^{2\I k(x-y)}
		\overline{q_{0}^{\mathrm{sol}}(-y)}\,dy\\
		&\qquad\qquad\qquad
		-\sigma\int_{-\infty}^{x}
		q_{0,L}^{\mathrm{sol}}(y)
		\int_y^x e^{2\I k(x-z)}
		\overline{q_{0,L}^{\mathrm{sol}}(-z)}\,dz\,
		\psi_3^{\mathrm{sol}}(y,0,k)\,dy.
		\end{align}
	\end{subequations}
	Similarly to the estimate for $|\psi_1-\psi_1^{\mathrm{sol}}|$, we have
	\begin{equation}
	|\psi_3(x,0,k)-\psi_3^{\mathrm{sol}}(x,0,k)|\leq
	\|q_0-q_{0,L}^{\mathrm{sol}}\|_{L^1}
	C_2(x),\quad x,k\in\mathbb{R},
	\end{equation}
	where
	\begin{align}
	\nonumber
	C_2(x)=&1
	+\sum\limits_{n=1}^{\infty}
	\left\{
	\sum\limits_{m=0}^{n-1}
	\left(
	\frac{Q_0^m(x) R_0^m(x)}
	{(m!)^2}\cdot
	\frac{
		(Q_0^{\mathrm{sol}}(x))^{(n-m)}
		(R_0^{\mathrm{sol}}(x))^{(n-m)}
	}
	{((n-m)!)^2}
	\right.\right.\\
	\nonumber
	&
	\left.\left.
	+\|q_0\|_{L^1}
	\|q_{0,L}^{\mathrm{sol}}\|
	_{L^1}
	\frac{Q_0^m(x) R_0^m(x)}
	{(m!)^2}\cdot
	\frac{
		(Q_0^{\mathrm{sol}}(x))^{(n-m-1)}
		(R_0^{\mathrm{sol}}(x))^{(n-m-1)}
	}
	{((n-m-1)!)^2}
	\right)\right.\\
	&\left.+\frac{Q_0^n(x) R_0^n(x)}{(n!)^2}
	\right\},
	\end{align}
	Observe that
	$C_2^{\infty}=\lim\limits_{x\to+\infty}C_2(x)$, see \eqref{C-2-infty}.
	Using that $|\alpha_1(k)|\leq I_0\left(2\|q_{0,L}^{\mathrm{sol}}\|
	_{L^1}\right)$ for all $k\in\overline{\mathbb{C}^+}$
	(cf.\,\,\eqref{N-ser-F-1}) and \eqref{psi-1-psi-sol-1},
	we obtain the following estimate
	\begin{align}\label{r-1-reg-est}
	\nonumber
	|r_1^{\mathrm{reg}}(k)|
	&\leq
	\lim\limits_{x\to+\infty}
	|\psi_3(x,0,k)-\psi_3^{\mathrm{sol}}(x,0,k)|
	\left|\frac{\alpha_1(k)}{a_1(k)}\right|\\
	\nonumber
	&\leq
	\|q_0-q_{0,L}^{\mathrm{sol}}\|_{L^1}
	C_2^{\infty}
	\frac{I_0\left(
		2\|q_{0,L}^{\mathrm{sol}}\|_{L^1}\right)}
	{\alpha_1(k)-|a_1(k)-\alpha_1(k)|}\\
	&\leq
	\|q_0-q_{0,L}^{\mathrm{sol}}\|_{L^1}
	C_2^{\infty}
	\frac{I_0\left(
		2\|q_{0,L}^{\mathrm{sol}}\|_{L^1}\right)}
	{d_1-\|q_0-q_{0,L}^{\mathrm{sol}}\|_{L^1}
		(
		\|q_0\|_{L^1}+\|q_{0,L}^{\mathrm{sol}}\|_{L^1}
		)C_1^{\infty}},
	\end{align}
	where $d_1$ is given by \eqref{d-j}.
	Arguing similarly for $r_2^{\mathrm{reg}}(k)$, we have
	\begin{equation}\label{r-2-reg-est}
	|r_2^{\mathrm{reg}}(k)|\leq
	\|q_0-q_{0,L}^{\mathrm{sol}}\|_{L^1}
	C_2^{\infty}
	\frac{I_0\left(
		2\|q_{0,L}^{\mathrm{sol}}\|_{L^1}\right)}
	{d_2-\|q_0-q_{0,L}^{\mathrm{sol}}\|_{L^1}
		(
		\|q_0\|_{L^1}+\|q_{0,L}^{\mathrm{sol}}\|_{L^1}
		)C_1^{\infty}}.
	\end{equation}
	Estimates \eqref{r-1-reg-est} and \eqref{r-2-reg-est} imply that if \eqref{suff-cond-L-sol} holds, then $|r_j^{\mathrm{reg}}(k)|<1$, $j=1,2$ for all $k\in\mathbb{R}$.
	Moreover, \eqref{suff-cond-L-sol} implies \eqref{zer-a-1} and \eqref{zer-a-2}, and therefore conditions 1 and 2 of Theorem \ref{th-class-L-sol} are fulfilled.
\end{proof}
\begin{remark}
	The inequality \eqref{suff-cond-L-sol}
	can be applied in the case $q_{0,L}^{\mathrm{sol}}(x)\equiv0$ as well.
	In this case,  we have
	$a_1(k)=a_2(k)\equiv1$,  $C_1^{\infty}=C_2^{\infty}=I_0\left(2\|q_0\|_{L^1}\right)$, $d=1$, and thus \eqref{suff-cond-L-sol}
	becomes
	\begin{equation}\label{suff-cond-0}
	\|q_0\|_{L^1}(1+\|q_0\|_{L^1})I_0\left(2\|q_0\|_{L^1}\right)<1.
	\end{equation}
	Comparing \eqref{suff-cond-0} with the inequality \eqref{suff-Bessel} in Proposition \ref{prop-suff}
	we observe that \eqref{suff-Bessel} is better
	(less restrictive) than \eqref{suff-cond-0}. Indeed, taking $\|q_0\|_{L^1}=0.532$,  we have that  \eqref{suff-Bessel} holds whereas
	\eqref{suff-cond-0} does not (the l.h.s.\,\,of \eqref{suff-cond-0} is greater than $1.062$).
\end{remark}
\section{Solutions in $H^{1,1}$ with blow-up points }\label{bl-up-H11}
Now let us consider $q_0(x)\in H^{1,1}(\mathbb{R})$.
It is common to define a weak solution as follows.
\begin{definition}(Weak $H^{1,1}$ solution)
	We say that $q(x,t)$ is a weak $H^{1,1}$ solution of the Cauchy problem \eqref{iv-nnls} with $q_0(x)\in H^{1,1}(\mathbb{R})$, if $t\mapsto q(\cdot,t)$ is a continuous map from $\mathbb{R}$ to $H^{1,1}(\mathbb{R})$ and it satisfies the Duhamel formula
	\begin{equation}\label{Duhamel}
	q(x,t)=e^{\I t\bigtriangleup}q_0(x)+2\I\sigma\int_0^t
	e^{\I(t-\tau)\bigtriangleup}
	q^2(x,\tau)\overline{q(-x,\tau)}\,d\tau,
	\end{equation}
	where $\bigtriangleup=\partial_x^2$.
\end{definition}
\subsection{Local well-posedness in $H^{1,1}$}
In \cite{G17} Genoud proved the local existence and uniqueness of the solution of \eqref{iv-nnls} in $H^1(\mathbb{R})$.
Here, using iteration arguments \cite{S63}, we prove the local well-posedness  for the NNLS equation in the weighted space $H^{1,1}(\mathbb{R})$ (cf.\,\,Proposition 3.8 in \cite{T06}).
\begin{proposition}\label{lwp}
	(Local well-posedness in $H^{1,1}$).
	For all $R>0$ there exists $T=T(R)>0$ such that for any
	$q_0(x)\in H^{1,1}(\mathbb{R})$
	with
	$\|q_0\|_{H^{1,1}(\mathbb{R})}<R$,
	the solution $q(\cdot,t)\in H^{1,1}(\mathbb{R})$
	exists and is unique
	for all $t\in[-T,T]$.
\end{proposition}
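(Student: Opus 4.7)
The plan is to prove this by a standard Picard iteration (contraction mapping) argument applied to the Duhamel formulation \eqref{Duhamel}, carried out in the Banach space
$$
X_T := C\bigl([-T,T]\,;\,H^{1,1}(\mathbb{R})\bigr),\qquad \|q\|_{X_T}=\sup_{|t|\le T}\|q(\cdot,t)\|_{H^{1,1}}.
$$
Setting $N(q)(x,t):=q^2(x,t)\overline{q(-x,t)}$, I would define the map
$$
\Phi[q](x,t)=e^{\I t\Delta}q_0(x)+2\I\sigma\int_{0}^{t}e^{\I(t-\tau)\Delta}N(q)(x,\tau)\,d\tau
$$
and look for a fixed point in a closed ball of $X_T$ whose radius is adjusted to $R$.

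The first step is to obtain the two basic estimates that drive the scheme. For the free propagator one uses the commutation $xe^{\I t\Delta}=e^{\I t\Delta}(x+2\I t\partial_x)$, which together with the $L^2$-isometry of $e^{\I t\Delta}$ yields
$$
\|e^{\I t\Delta}f\|_{H^{1,1}}\le (1+2|t|)\|f\|_{H^{1,1}}.
$$
For the nonlinearity one invokes the fact that $H^1(\mathbb{R})\hookrightarrow L^\infty(\mathbb{R})$ together with the Banach-algebra structure of $H^1\cap L^\infty$, and the observation that the reflection $f(x)\mapsto f(-x)$ is an isometry of $H^{1,1}$. Controlling separately the $L^2$, the $\partial_x$ and the weighted $x$-component (the last via $\|x\,q^2(\cdot)\overline{q(-\cdot)}\|_{L^2}\le\|xq\|_{L^2}\|q\|_{L^\infty}^2$) gives
$$
\|N(q)(\cdot,\tau)\|_{H^{1,1}}\le C\|q(\cdot,\tau)\|_{H^{1,1}}^{3},
$$
and analogously a trilinear Lipschitz bound
$$
\|N(q)-N(\tilde q)\|_{H^{1,1}}\le C\bigl(\|q\|_{H^{1,1}}^{2}+\|\tilde q\|_{H^{1,1}}^{2}\bigr)\|q-\tilde q\|_{H^{1,1}}.
$$

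Combining these with the Duhamel formula I obtain, for any $q,\tilde q$ in the closed ball $\overline{B_{2R}}\subset X_T$,
$$
\|\Phi[q]\|_{X_T}\le (1+2T)R+C\,T(1+2T)(2R)^{3},\qquad
\|\Phi[q]-\Phi[\tilde q]\|_{X_T}\le C\,T(1+2T)(2R)^{2}\|q-\tilde q\|_{X_T}.
$$
Choosing $T=T(R)>0$ small enough so that $(1+2T)R+8CT(1+2T)R^{3}\le 2R$ and $4CT(1+2T)R^{2}\le 1/2$ makes $\Phi$ a contraction of $\overline{B_{2R}}$ into itself, and the Banach fixed-point theorem produces a unique $q\in\overline{B_{2R}}\subset X_T$ with $\Phi[q]=q$, i.e.\ a weak $H^{1,1}$ solution.

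The last step is to upgrade uniqueness from the ball to all of $X_T$. Given two $H^{1,1}$ solutions $q,\tilde q$ on $[-T,T]$, their difference satisfies $q-\tilde q=2\I\sigma\int_0^t e^{\I(t-\tau)\Delta}(N(q)-N(\tilde q))\,d\tau$; applying the same estimates and Gronwall's inequality forces $q=\tilde q$ on $[-T,T]$. The main technical point to watch throughout is the weighted component $\|x\,(\cdot)\|_{L^2}$: the $x$-weight is the reason for the growth factor $(1+2|t|)$ in the propagator estimate (it does not arise in the usual $H^1$ theory of \cite{G17}), and the reflection $q(-x,t)$ in the cubic term must be handled by the change of variable $y=-x$ to preserve the weighted norm. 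Once those two points are in place, the rest of the scheme is standard.
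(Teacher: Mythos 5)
Your proposal is correct and follows essentially the same route as the paper: a contraction (Picard) argument for the Duhamel formula in $C([-T,T];H^{1,1}(\mathbb{R}))$, based on a linearly growing propagator bound $\|e^{\I t\bigtriangleup}f\|_{H^{1,1}}\lesssim(1+|t|)\|f\|_{H^{1,1}}$ (the paper cites the equivalent estimate $\hat C(1+|t|^2)^{1/2}$ and packages the fixed-point step in Segal's abstract iteration lemma) together with the algebra property of $H^{1,1}$ and the trilinear Lipschitz estimate for $q^2(x)\overline{q(-x)}$. Your explicit commutation identity for the weight and the Gronwall step upgrading uniqueness from the ball to all of $C([-T,T];H^{1,1})$ are welcome small additions, but the argument is the same in substance.
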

\begin{proof} We will use the following abstract iteration argument \cite{S63} (cf.\,\,Proposition 1.38 in \cite{T06}):
	\begin{lemma}\label{Seg}\cite{S63}
		Let $\mathcal{N}$ and $\mathcal{S}$ be two Banach spaces and $D:\mathcal{N}\to\mathcal{S}$ be a linear operator with the bound
		$$
		\|D(f)\|_{\mathcal{S}}\leq C_0\|f\|_{\mathcal{N}}\quad
		\text{for all } f\in\mathcal{N},\quad C_0>0.
		$$
		Assume that $N:\mathcal{S}\to\mathcal{N}$, $N(0)=0$ is a nonlinear map which satisfies the Lipschitz condition
		$$
		\|N(q)-N(p)\|_{\mathcal{N}}\leq
		\frac{1}{2C_0}\|u-v\|_{\mathcal{S}},\quad\text{for all }
		q,p\in B_\varepsilon,
		$$
		where $B_\varepsilon=\{q\in\mathcal{S}:
		\|q\|_\mathcal{S}\leq\varepsilon\}$ for some $\varepsilon>0$.
		Then the nonlinear equation
		\begin{equation}\label{abs-nonl}
		q=q_{\mathrm{lin}}+D[N(q)],
		\end{equation}
		has a unique solution $q\in B_\varepsilon$ for all $q_{\mathrm{lin}}\in B_\frac{\varepsilon}{2}$, which satisfies
		$$
		\|q\|_{\mathcal{S}}\leq 2\|q_{\mathrm{lin}}\|_{\mathcal{S}}.
		$$
	\end{lemma}
	We take $\mathcal{S}=\mathcal{N}=\mathcal{N}_T$, where
	\begin{equation}
	\mathcal{N}_T=\{q(\cdot,t)\in H^{1,1}(\mathbb{R}):
	t\mapsto H^{1,1}(\mathbb{R})\text{ is continuous for } t\in[-T,T]\},
	\end{equation}
	with the norm
	\begin{equation}
	\|f(x,t)\|_{\mathcal{N}_T}=
	\sup\limits_{t\in[-T,T]}\|f(x,t)\|_{H^{1,1}(\mathbb{R})}.
	\end{equation}
	We write the nonlinear equation \eqref{Duhamel} in the form \eqref{abs-nonl} with $q_{\mathrm{lin}}=e^{\I t\bigtriangleup}q_0(x)$ and with the following linear operator $D$ and nonlinear map $N$:
	\begin{equation}
	D(f(x,t))=\int_0^te^{\I(t-\tau)\bigtriangleup}f(x,\tau)\,d\tau,
	\quad
	N(q(x,t))=2\I\sigma q^2(x,t)\overline{q(-x,t)}.
	\end{equation}
	Using the Strichartz estimate (see, e.g., \cite{T06})
	\begin{equation}\label{Str-est}
	\|e^{\I t\bigtriangleup}f\|_{H^{1,1}(\mathbb{R})}
	\leq\hat C(1+|t|^2)^{1/2}\|f\|_{H^{1,1}(\mathbb{R})},\quad f\in H^{1,1}(\mathbb{R}),\quad \hat C>0,
	\end{equation}
	we conclude that $D$ is an operator from $\mathcal{N}_T$ to $\mathcal{N}_T$:
	\begin{equation}\label{D-f-est}
	\begin{split}
	\|D(f)\|_{\mathcal{N}_T}&\leq\sup\limits_t\int_0^{t}
	\|e^{\I(t-\tau)\bigtriangleup} f(x,\tau)\|_{H^{1,1}}\,d\tau\\
	&\leq\hat C\int_0^{T}(1+|T-\tau|^2)^{\frac{1}{2}}\,d\tau
	\|f\|_{\mathcal{N}_T}
	\leq C_1T\|f\|_{\mathcal{N}_T},
	\end{split}
	\end{equation}
	where $C_1>0$ does not depend on $T$.
	Taking into account the algebra property of the space $H^{1,1}$:
	\begin{equation}
	\|fg\|_{H^{1,1}(\mathbb{R})}\leq
	C\|f\|_{H^{1,1}(\mathbb{R})}\|g\|_{H^{1,1}(\mathbb{R})},\quad
	f,g\in H^{1,1}(\mathbb{R}),\quad C>0,
	\end{equation}
	one obtains the following estimate for all fixed $t$ (here we drop the arguments in $f(x,t)$ and $g(x,t)$):
	\begin{align}\label{N-est}
	\nonumber
	\|N(f)-N(g)\|_{H^{1,1}}
	\leq &2(\|f(x)\overline{f(-x)}\|_{H^{1,1}}
	\|f-g\|_{H^{1,1}}\\
	\nonumber
	&\quad+\|f(x)\overline{f(-x)}g(x)-g^2(x)\overline{g(-x)}\|_{H^{1,1}})\\
	\nonumber
	&\leq 2(\|f(x)\overline{f(-x)}\|_{H^{1,1}}+\|fg\|_{H^{1,1}}
	\|\overline{f(-x)}-\overline{g(-x)}\|_{H^{1,1}}\\
	\nonumber
	&\quad+\|g(x)\overline{g(-x)}\|_{H^{1,1}}\|f-g\|_{H^{1,1}})\\
	& \leq C (\|f\|^2_{H^{1,1}}+\|g\|^2_{H^{1,1}})\|f-g\|_{H^{1,1}}.
	\end{align}
	The latter implies that
	\begin{equation}
	\|N(f)-N(g)\|_{\mathcal{N}_T}\leq
	C_2\hat R^2\|f-g\|_{\mathcal{N}_T},\quad
	%\text{for all }
	\|f\|_{\mathcal{N}_T},\|g\|_{\mathcal{N}_T}\leq\hat R,
	\end{equation}
	where $C_2>0$ does not depend on $\hat R$.
	Let a small $T>0$ be such that $C_2\hat R^2\leq\frac{1}{2C_1T}$ (see \eqref{D-f-est}).
	Then, by Lemma \ref{Seg}, for all $e^{\I t\bigtriangleup}q_0(x)$ such that $\|e^{\I t\bigtriangleup}q_0\|_{\mathcal{N}_T}
	\leq\frac{\hat R}{2}$, there exists a unique $q(x,t)$ that satisfies \eqref{Duhamel}.
	Therefore, it remains to determine $\hat R$ in terms of $R$.
	Assuming that $T<1$ and applying \eqref{Str-est} we arrive at
	\begin{equation}
	\|e^{\I t\bigtriangleup} q_0\|_{\mathcal{N}_T}\leq\sqrt{2}\hat C\|q_0\|_{H^{1,1}}\leq\sqrt{2}\hat C R,
	\end{equation}
	which implies that we can take $\hat R=2\sqrt{2}\hat C R$.
\end{proof}
\subsection{Global solutions in $H^{1,1}$}
\label{bl-up-H11-glob}
For presenting global weak solutions of \eqref{iv-nnls},
we will use the following result \cite{Z98} about the direct $\mathcal{R}$ and inverse $\mathcal{I}$ maps (see Section \ref{sec2.2}) in $H^{1,1}(\mathbb{R})$.
\begin{lemma}\label{dir-inv}\cite{Z98}
	Let $\mathcal{D}\subset H^{1,1}(\mathbb{R})$ be such that for all $q_0(x)\in\mathcal{D}$,  the corresponding basic RH problem (i)  does not have residue conditions  and (ii) has a solution.
	%Then $\mathcal{R}:\mathcal{D}\to H^{1,1}(\mathbb{R})$ and $\mathcal{I}:\mathcal{R}(\mathcal{D})\to \mathcal{D}$
	%are Lipschitz functions in $H^{1,1}(\mathbb{R})$:
	Then  the direct and inverse transforms
	are  Lipschitz continuous:
	for all $q_0,\check q_0\in\mathcal{D}$,
	\begin{subequations}
		\begin{align}
		\label{dir-est}
		&\|r_j(k)-\check r_j(k)\|_{H^{1,1}(\mathbb{R})}
		\leq C\|q_0(x)-\check q_0(x)\|_{H^{1,1}(\mathbb{R})},&& j=1,2,\,\, C>0,\\
		\label{inv-est}
		&\|q(\cdot,t)-\check q(\cdot,t)\|_{H^{1,1}(\mathbb{R})}
		\leq C(t)\|r_j(k)-\check r_j(k)\|_{H^{1,1}(\mathbb{R})},&& j=1,2,\,\, C(t)>0.
		\end{align}
	\end{subequations}
	Here $C(t)<C(1+T)$, 
		$C>0$, $t\in[-T,T]$ for any $T>0$, $r_j$ and $\check r_j$, $j=1,2$   are the reflection coefficients
	associated with
	$q_0$ and $\check q_0$ respectively and $q(x,t)$ and $\check q(x,t)$ are recovered from the solutions of the  RH problems determined by $r_j$ and $\check r_j$.
\end{lemma}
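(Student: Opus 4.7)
The plan is to prove the two Lipschitz estimates separately, using Volterra integral equations for the direct transform and the Beals--Coifman singular integral equation for the inverse transform. The estimates will be obtained by standard resolvent-type arguments, with the work being in verifying that the full $H^{1,1}$ norm (not just $L^2$) is controlled, which forces us to commute derivatives in $k$ and multiplication by $k$ past the Cauchy operators.

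For the direct estimate \eqref{dir-est}, I would start from the Volterra equations \eqref{Psi} at $t=0$ for $\Psi_j(x,0,k)$. Since $H^{1,1}(\mathbb{R})\hookrightarrow L^1(\mathbb{R})$, the Neumann series converges and defines $\Psi_j$ as an analytic function of $q_0$ into the space of bounded continuous matrix functions on $\mathbb{R}\times\overline{\mathbb{C}^{\pm}}$. Writing $\Psi_j - \check\Psi_j$ as the solution of an inhomogeneous Volterra equation with source proportional to $q_0-\check q_0$, one obtains $\|\Psi_j - \check\Psi_j\|_{L^\infty_x L^\infty_k} \lesssim \|q_0-\check q_0\|_{L^1}$ uniformly over bounded subsets of $\mathcal{D}$. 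The determinantal formulas \eqref{det-rel} and the definition \eqref{r-12} then give the Lipschitz bound for $r_j$ in $L^\infty$; to upgrade this to $H^{1,1}$, I would use the representations $r_j(k) = \lim_{x\to+\infty} e^{\mp 2\I kx}\psi_{j'}(x,0,k)/a_j(k)$ together with integration by parts in $y$ in \eqref{psi-1-3}--\eqref{psi24}, which trades the oscillatory factor for a $\partial_y$ acting on $q_0$ (to control $kr_j\in L^2$) and use of the $x$-moments of $q_0$ (to control $\partial_k r_j\in L^2$). This is exactly where the weight $m=1$ and the derivative $s=1$ in $H^{1,1}$ balance out.

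For the inverse estimate \eqref{inv-est}, I would work with the singular integral equation \eqref{int-mu}, or equivalently \eqref{M-int-eq}. Solvability of the RH problem means that $I-\mathcal{C}_w$ is invertible on $L^2(\mathbb{R})$, and a uniformity argument (open mapping theorem plus continuity of $w$ in $r_j$) shows that $\|(I-\mathcal{C}_w)^{-1}\|_{L^2\to L^2}$ is bounded by a constant depending only on $t$ and the set $\mathcal{D}$. The resolvent identity
\begin{equation*}
\mu^{\#}-\check\mu^{\#} = (I-\mathcal{C}_w)^{-1}(\mathcal{C}_{w-\check w})\check\mu + (I-\mathcal{C}_w)^{-1}\mathcal{C}_{w-\check w}(I),
\end{equation*}
combined with $\|\mathcal{C}_\pm\|_{L^2\to L^2}=1$ and the obvious estimates on $w-\check w$, gives
$\|\mu^{\#}-\check\mu^{\#}\|_{L^2(\mathbb{R})} \lesssim \|r_j-\check r_j\|_{L^\infty(\mathbb{R})}(1+\|\check\mu^{\#}\|_{L^2})$, and hence (via Sobolev embedding $H^{1,1}\hookrightarrow L^\infty$) control by $\|r_j-\check r_j\|_{H^{1,1}}$. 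Plugging into the reconstruction formula, $q(x,t)$ arises as a Fourier-type object of $\mu$, and $\|q-\check q\|_{L^2}$ follows by Plancherel applied to the representation of $q$ as a Cauchy integral.

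The main obstacle, as in Zhou's paper \cite{Z98}, is to upgrade the $L^2$ bound on $q-\check q$ to the full $H^{1,1}$ bound. Controlling $\partial_x(q-\check q)$ in $L^2$ requires differentiating the RH problem once in $x$, which produces an additional factor of $k$ in $w^{\pm}$; this factor is absorbed using $\partial_k r_j\in L^2$ after an integration by parts against the phase $e^{\pm 2\I kx}$. Controlling $x(q-\check q)$ in $L^2$ symmetrically requires $kr_j\in L^2$, again from the $H^{1,1}$ regularity of $r_j$. In both cases one must verify that the auxiliary singular integral equations for $\partial_x\mu$ and for a $\partial_k$-shifted version of $\mu$ admit uniformly bounded resolvents; this reduces to the same invertibility of $I-\mathcal{C}_w$ used above, since the modified operators differ from $\mathcal{C}_w$ by compact perturbations whose size is controlled by the $H^{1,1}$ norm of $r_j$. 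Once these are established, the resolvent identity argument above is applied again in each of the two ``upgraded'' function spaces and summed to yield \eqref{inv-est}.
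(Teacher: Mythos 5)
Your overall architecture is the same as the paper's: \eqref{dir-est} via the Volterra/Neumann-series representation of the Jost solutions (the paper defers this to \cite{Z98}, and your duality there — $\partial_x q_0$ controlling $kr_j$, moments of $q_0$ controlling $\partial_k r_j$ — is stated correctly), and \eqref{inv-est} via the Beals--Coifman equation \eqref{int-mu} and a second-resolvent-identity decomposition, which is exactly the paper's splitting into $I_{21},I_{22},I_{23}$ in Proposition \ref{r-q-Lip}. However, in the step you yourself identify as the main obstacle — upgrading the $L^2$ bound on $q-\check q$ to $H^{1,1}$ — you have the Fourier-type duality reversed, and as written both mechanisms fail. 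For $\partial_x(q-\check q)\in L^2$: differentiating in $x$ indeed produces $k w^\pm$, but this factor is handled directly because $kr_j\in L^2$ (the weight in the $H^{1,1}$ norm of $r_j$), exactly as in \eqref{diff-mu-1-x} and Step 2 of Proposition \ref{Hss}; your proposed absorption ``using $\partial_k r_j\in L^2$ after an integration by parts against the phase'' would require $\partial_k\bigl(kr_j\mu\bigr)\in L^1$, in particular $k\partial_k r_j\in L^2$, which is \emph{not} controlled by the $H^{1,1}$ norm as defined in this paper, and would also force you to estimate $\partial_k\mu$, a substantially heavier task that your sketch does not address. Symmetrically, $x(q-\check q)\in L^2$ cannot be obtained from $kr_j\in L^2$: spatial decay of the reconstructed potential comes from the $k$-smoothness $\partial_k r_j\in L^2$, through estimates of the type \eqref{C_+-norm}, \eqref{C_--norm}.

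A second concrete omission sits in the same weighted estimate: $\mathcal{C}_+(f(k)e^{-2\I kx})$ decays only for $x\le 0$ and $\mathcal{C}_-$ only for $x\ge 0$, so one factorization of the jump matrix cannot give decay on both half-lines. The paper uses the upper-lower factorization \eqref{upp-low} for $x\le0$ and the lower-upper factorization \eqref{low-upp}, conjugated by the scalar function $\delta$ of \eqref{delta} (whose definition uses the vanishing winding number), for $x\ge0$; your sketch contains no analogue of this device, and without it the bound on $x(q-\check q)$ on one of the half-lines is simply not reachable. Finally, the claim that the open mapping theorem yields a resolvent bound for $(I-\mathcal{C}_w)^{-1}$ uniform over all of $\mathcal{D}$ is not justified; what is actually available (and what the paper implicitly uses, cf.\,\,Proposition \ref{r-q-Lip} and Remark \ref{cont-M}) is local uniformity on bounded sets of reflection coefficients, obtained by a Neumann-series perturbation of an invertible operator. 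With the duality pairing corrected, the two-factorization/$\delta$ device added, and the resolvent bound localized, your argument would coincide with the paper's proof.
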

\begin{remark}
	%The estimate \eqref{inv-est} is proved in Proposition \ref{r-q-Lip}.
	%Notice that in contrast with \cite{Z98, DZ03} we consider the potential depended on parameter $t$, while Lipschitz condition for the direct map can be prove as in \cite{Z98} (see also \cite{DZ03, P19}).
	In contrast with \cite{Z98, DZ03}, we consider the inverse map for
	$q$ that depends on the  parameter $t$, and we give the proof of \eqref{inv-est} in Proposition \ref{r-q-Lip}.
	On the other hand, it is enough for our purposes to have the Lipschitz continuity for the direct map for the potential at $t=0$ only, which can be proved for the general AKNS system by following the same lines as in \cite{Z98}, where the conventional NLS equation was considered (see also \cite{DZ03}).
	Finally, we notice that since (i) under small $H^{1,1}$ perturbations of the potential, the discrete spectrum does not arise and (ii) a small perturbation of an invertible operator is also invertible, the set $\mathcal{D}$ is an open subset of $H^{1,1}(\mathbb{R})$.
\end{remark}

As in the case of a classical solution, we first prove global solvability in $H^{1,1}$ for small initial data (cf.\,\,\cite{ZF22}).
In the theorem below we prove that under certain conditions on $q_0$, the recovered, by the inverse transform \eqref{q(x,t)}, potential is a global weak solution of \eqref{iv-nnls}.
In turn, the uniqueness will follow from Proposition \ref{lwp}.
\begin{theorem}\label{H-gl-sol}($H^{1,1}$ global solution: no solitons).
	Suppose that $q_0(x)\in H^{1,1}(\mathbb{R})$
	satisfies properties 1. and 2. in Theorem \ref{th-class-glob}.
	
	Then the Cauchy problem \eqref{iv-nnls} has the unique global weak $H^{1,1}$ solution, which can be obtained by \eqref{q(x,t)}
	in terms of the solution of the basic RH problem.
\end{theorem}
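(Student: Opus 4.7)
The plan is to construct the global weak $H^{1,1}$ solution by Schwartz approximation combined with the Lipschitz continuity of the direct/inverse maps, and then to derive uniqueness from the local well‑posedness result in Proposition \ref{lwp}. Since assumptions 1 and 2 place $q_0$ inside the open subset $\mathcal{D}\subset H^{1,1}(\mathbb{R})$ introduced in Lemma \ref{dir-inv} (openness is guaranteed because the zero condition is preserved under small perturbations and because the small‑norm condition $\|r_j\|_{L^\infty}<1$ is stable under Lipschitz perturbations of the direct map), there exists $\varepsilon>0$ and a sequence $q_0^{(n)}\in\mathcal{S}(\mathbb{R})$ with $\|q_0^{(n)}-q_0\|_{H^{1,1}}\to 0$ and $q_0^{(n)}\in\mathcal{D}$ for all $n$. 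For each $n$, Theorem \ref{th-class-glob} produces the unique global classical Schwartz solution $q^{(n)}(x,t)$ defined via the reconstruction formula \eqref{q(x,t)} applied to the basic RH problem with data $r_j^{(n)}$.

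First I would establish that the sequence $\{q^{(n)}(\cdot,t)\}$ is Cauchy in $H^{1,1}(\mathbb{R})$ uniformly on compact time intervals. The direct estimate \eqref{dir-est} in Lemma \ref{dir-inv} gives
\begin{equation}
\|r_j^{(n)}-r_j^{(m)}\|_{H^{1,1}}\le C\,\|q_0^{(n)}-q_0^{(m)}\|_{H^{1,1}},\qquad j=1,2,
\end{equation}
so $\{r_j^{(n)}\}$ is Cauchy in $H^{1,1}(\mathbb{R})$ with limit $r_j$, the reflection coefficient associated with $q_0$. The inverse estimate \eqref{inv-est} then yields, for each fixed $t\in\mathbb{R}$,
\begin{equation}
\|q^{(n)}(\cdot,t)-q^{(m)}(\cdot,t)\|_{H^{1,1}}\le C(t)\,\|r_j^{(n)}-r_j^{(m)}\|_{H^{1,1}},
\end{equation}
so $q^{(n)}(\cdot,t)$ converges in $H^{1,1}(\mathbb{R})$ to some $q(\cdot,t)$; by \eqref{q(x,t)} this limit is exactly the function reconstructed from the basic RH problem associated with $q_0$. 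Continuity of $t\mapsto q(\cdot,t)$ into $H^{1,1}(\mathbb{R})$ will be verified by combining the uniform-in-$n$ bound of $C(t)$ on compact sets with the joint continuity of the reconstruction in $(x,t)$, which in turn follows from the $(x,t)$-smoothness of the solution of the basic RH problem (Corollaries \ref{x-deriv-M}, \ref{t-deriv-M} applied to $q^{(n)}$ and the uniform estimates carried to the limit).

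Next I would verify that the limit satisfies the Duhamel formula \eqref{Duhamel}. Each $q^{(n)}$ is a classical Schwartz solution, hence satisfies \eqref{Duhamel} pointwise. Using the algebra property of $H^{1,1}(\mathbb{R})$ (cf.\ the estimate \eqref{N-est} in the proof of Proposition \ref{lwp}) one has
\begin{equation}
\|\,(q^{(n)})^{2}\overline{q^{(n)}(-\cdot,\tau)}-q^{2}\overline{q(-\cdot,\tau)}\,\|_{H^{1,1}}
\le C\bigl(\|q^{(n)}\|_{H^{1,1}}^{2}+\|q\|_{H^{1,1}}^{2}\bigr)\|q^{(n)}-q\|_{H^{1,1}},
\end{equation}
so together with the Strichartz estimate \eqref{Str-est} and the uniform $H^{1,1}$ bound on $q^{(n)}(\cdot,\tau)$ for $\tau$ in a compact interval, one can pass to the limit inside the time integral in \eqref{Duhamel} and conclude that $q$ solves the Duhamel equation on every compact time interval, hence globally.

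Finally, uniqueness follows from Proposition \ref{lwp} by a standard maximal-interval argument. Suppose $q$ and $\tilde q$ are two weak $H^{1,1}$ solutions with the same initial datum $q_0$. Since $t\mapsto q(\cdot,t),\tilde q(\cdot,t)$ are continuous into $H^{1,1}$, on any compact interval $[-T_0,T_0]$ both are bounded by some $R$; Proposition \ref{lwp} then gives a common $T(R)>0$ on which the solution is unique, so $q\equiv\tilde q$ on $[-T(R),T(R)]$. Defining $T^{\ast}=\sup\{T>0:\ q\equiv\tilde q\text{ on }[-T,T]\}$, if $T^{\ast}<\infty$ continuity in $t$ forces $q(\cdot,T^{\ast})=\tilde q(\cdot,T^{\ast})$; applying Proposition \ref{lwp} with initial time $\pm T^{\ast}$ and the common radius $R=\sup_{|t|\le T^{\ast}+1}\|q(\cdot,t)\|_{H^{1,1}}$ yields a contradiction to the maximality of $T^{\ast}$. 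I expect the main obstacle to be the verification that $t\mapsto q(\cdot,t)$ is continuous into $H^{1,1}$ and, correspondingly, that $C(t)$ in \eqref{inv-est} is locally bounded in $t$ uniformly in $n$; all other steps reduce to a careful bookkeeping of Lipschitz estimates and to the algebra structure of $H^{1,1}(\mathbb{R})$.
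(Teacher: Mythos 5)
Your proposal is correct and follows essentially the same route as the paper's proof: approximate $q_0$ by Schwartz data, invoke Theorem \ref{th-class-glob} for the approximants, use the Lipschitz estimates \eqref{dir-est}--\eqref{inv-est} of Lemma \ref{dir-inv} to pass to the limit uniformly on compact time intervals, deduce the Duhamel formula \eqref{Duhamel} in the limit, and obtain uniqueness from Proposition \ref{lwp}. The extra details you supply (the limit passage inside the Duhamel integral via the algebra property and Strichartz estimate, and the maximal-interval uniqueness argument) are just careful elaborations of steps the paper states tersely.
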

\begin{proof}
	Due to Lemma \ref{dir-inv}, we have   $r_j(k)\in H^{1,1}(\mathbb{R})$, $j=1,2$.
	Moreover, since $e^{(-1)^{j+1}4\I (\cdot)^2t}r_j(\cdot)\in H^{1,1}(\mathbb{R})$ and $\|e^{(-1)^{j+1}4\I (\cdot)^2t}r_j(\cdot)\|_{L^{\infty}(\mathbb{R})}<1$,
	it follows that the basic RH problem is solvable for all fixed $t$ and $q(\cdot,t)$ recovered by \eqref{q(x,t)}  belongs to
	$H^{1,1}(\mathbb{R})$ for all fixed $t\in\mathbb{R}$.
	Let us prove that $q(x,t)$ is a unique global solution of \eqref{iv-nnls}.
	
	Consider a sequence $q_{0n}(x)\in\mathcal{S}(\mathbb{R})$, $n\in\mathbb{N}$, such that $\|q_{0n}-q_0\|_{H^{1,1}}\to0$ as $n\to\infty$.
	Let $r_{j,n}(k)$ denote the reflection coefficients corresponding to $q_{0n}$.
	In view of \eqref{dir-est} in Lemma \ref{dir-inv}, $\|r_j-r_{j,n}\|_{H^{1,1}}\to0$ as $n\to\infty$, and we can choose $q_{0n}$ such that $\|r_{j,n}\|_{L^{\infty}}<1$ for all $n\in\mathbb{N}$.
	Then, due to Theorem \ref{th-class-glob}, there exist  a sequence $q_n(x,t)$ of  unique classical solutions with $q_n(x,0)=q_{0n}$, and \eqref{inv-est} implies that $q_n$ converges to $q$: for all $T\in\mathbb{R}$,
	\begin{align}\label{q_n-r_n}
	\sup\limits_{t\in[-T,T]}
	\|q_{n}(\cdot,t)-q(\cdot,t)\|
	_{H^{1,1}(\mathbb{R})}
	&\leq C
	\|r_{j,n}(\cdot)-r_j(\cdot)\|
	_{H^{1,1}(\mathbb{R})}\to 0,\quad n\to\infty.
	\end{align}
	%	we conclude that $t\mapsto q(\cdot,t)$ is also a continuous function from $\mathbb{R}$ to $H^{1,1}(\mathbb{R})$.
	Since $q_n(x,t)$, being
	classical solutions of the NNLS
	equation,
	satisfy \eqref{Duhamel}, it follows that  the limit $q(x,t)$
	satisfies \eqref{Duhamel} as well and thus is a weak $H^{1,1}$ solution for $t\in[-T,T]$ for all $T>0$.

	%Theorem \ref{th-class-glob} implies that $q_n(x,t)$ are global classical solutions of \eqref{iv-nnls}, so $q_n(x,t)$ also satisfy \eqref{Duhamel}.
	%Using Strichartz estimate \eqref{Str-est} we have for all $t\in[-T,T]$, $T>0$ (we drop arguments in $q(x,t)$):
	%\begin{equation}\label{q_H11}
	%\|q-q_n\|_{H^{1,1}}\leq
	%C(1+|T|^2)^{\frac{1}{2}}\|q_0-q_{0n}\|_{H^{1,1}}
	%+2\sup\limits_{t}\|q^2(x)\overline{q(-x)}
	%-q^2_n(x)\overline{q_n(-x)}\|_{H^{1,1}}
	%\int_0^T(1+|T-\tau|^2)^{\frac{1}{2}}\,d\tau.
	%\end{equation}
	%Combining \eqref{q_n-r_n}, \eqref{q_H11} and \eqref{N-est} with $f=q$ and $g=q_n$, one concludes that $q(x,t)$ solves \eqref{Duhamel}.
	Finally, due to the local well-posedness result (see Proposition \ref{lwp}), this solution is unique.
\end{proof}
%\begin{remark}\label{H11-sol-RH}
%	Observe that we prove that the global $H^{1,1}$ solution of the Cauchy problem \eqref{iv-nnls} with initial data satisfying conditions 1. and 2. of Theorem \ref{H-gl-sol} (see also Remark \ref{suff-cond-small-iv} below) can be found via the associated RH problem by the inverse formula \eqref{q(x,t)}.
%\end{remark}
\begin{remark}\label{suff-cond-small-iv}
	A sufficient condition for
	conditions 1 and 2 in Theorem \ref{H-gl-sol} to hold can be written in terms of the $H^{1,1}$ norm of $q_0$ as follows:
	\begin{equation}\label{suff-Bessel-H}
	\frac{1}{2}(2\|q_0\|_{H^{1,1}(\mathbb{R})}+1)
	I_0(4\|q_0\|_{H^{1,1}(\mathbb{R})})
	<1,
	\end{equation}
	which is satisfied, in particular, if $\|q_0\|_{H^{1,1}(\mathbb{R})}\leq 0.266$ and is violated for $\|q_0\|_{H^{1,1}(\mathbb{R})}\break \geq 0.267$.
	
	Indeed, applying the H\"older inequality,  the following estimate for  $f(x)\in H^{1,1}(\mathbb{R})$ holds true:
	\begin{align}
	\nonumber
	\|f\|_{L^1(\mathbb{R})}&=\int_{-\infty}^{+\infty}
	\frac{1}{1+|x|}(1+|x|)|f(x)|\,dx\\
	\nonumber
	&\leq
	\left(\int_{-\infty}^{+\infty}
	\frac{1}{(1+|x|)^2}\,dx\right)^{\frac{1}{2}}
	\left(\int_{-\infty}^{+\infty}
	(1+|x|)^2|f(x)|^2\,dx\right)^{\frac{1}{2}}\\
	\label{L1-H11}
	&\leq \sqrt{2}\left(\int_{-\infty}^{+\infty}
	(1+2|x|+x^2)|f(x)|^2\,dx\right)^{\frac{1}{2}}\leq
	2\|f\|_{H^{0,1}(\mathbb{R})},
	\end{align}
	which together with \eqref{suff-Bessel} implies \eqref{suff-Bessel-H}.
	Notice also that \eqref{L1-H11} implies that initial data satisfying assumptions of Theorem \ref{H-gl-sol} form an open set in $H^{1,1}$.
\end{remark}
\begin{remark}\label{H1-H-11}
	In \cite{G17} Genoud constructed one soliton, blowing-up solution, which can have arbitrary small $H^1$ norm.
	However, smallness of the initial data in the weighted Sobolev space $H^{1,1}$ implies that the solution does not have a soliton  part (see the  sufficient condition \eqref{suff-Bessel-H}
	for conditions 1 and 2 in Theorem \ref{H-gl-sol}) and the weak $H^{1,1}$ solution
	associated with such initial data exists globally in time.
\end{remark}
\begin{remark}\label{cons-laws}
	It is well-nown that conservation laws constitute
	one of the most important tools for establishing global solvability for nonlinear PDEs.
	Particularly, for the conventional NLS equation, the conservation of total mass and energy
	\begin{equation*}
	\begin{split}
	&M[q]=\int_{-\infty}^{+\infty}|q(x,t)|^2\,dx=const,\\
	&E[q]=\int_{-\infty}^{+\infty}
	(|\partial_xq(x,t)|^2-\sigma|q(x,t)|^4)\,dx=const,
	\end{split}
	\end{equation*}
	allows to obtain \textit{a priory} estimates and to show that the solution exists globally.
	In contrast with this,
	the corresponding conservation laws
	in the nonlocal case have the form \eqref{cons-mass} and \eqref{cons-energy}, %\cite{AMP, AMN}
	%$$
	%M^\prime[q]=\int_{-\infty}^{+\infty}
	%q(x,t)\overline{q(-x,t)}\,dx=const,\,\,
	%E^\prime[q]=\int_{-\infty}^{+\infty}
	%\left(\partial_xq(x,t)\partial_x\overline{q(-x,t)}
	%-\sigma q^2(x,t)\overline{q^2(-x,t)}\right)\,dx=const,
	%$$
	which do not preserve any reasonable norm and, in addition, can be negative.
	In this paper, in order to  obtain the existence of a global conservative solution (which can have finite-time blow-ups), we use an alternative way, namely, based on the RH problem approach.
	
	%So, to obtain the global solution of the Cauchy problem \eqref{iv-nnls} we in addition to PDE approaches apply the IST method.
\end{remark}
\subsection{Solutions in $H^{1,1}$ beyond blow-up}
In Theorem \ref{H-gl-sol}, we showed that a weak $H^{1,1}$ solution of \eqref{iv-nnls} with small initial data can be determined with the help of IST,
in terms of the solution of the associated RH problem.
However, in the case when the initial data is no longer satisfy the smallness assumption and whose evolution can exhibit solitons, the solution, in general, can blow-up in finite time (see \eqref{one-soliton} and Theorem \ref{th-class-L-sol}).
Therefore, for such initial values, the  solution in the sense of \eqref{Duhamel} is no longer exists globally, and a new concept of solution is needed.
Below we define it through the associated RH problem and call it \textit{pointwise $H^{1,1}$ solution}.
\begin{definition}\label{blow-up-H11}
	(Pointwise $H^{1,1}$ solution)\label{def-H11}
	Consider the Cauchy problem \eqref{iv-nnls} with $q_0(x)\in H^{1,1}(\mathbb{R})$, which satisfies Assumptions 1 and 2.
	Then for every $(x,t)\in\mathbb{R}^2$ such that the basic RH problem has a $L^2$ solution $M(x,t,\cdot)$, we say that $q(x,t)$ defined by \eqref{q(x,t)} is a pointwise $H^{1,1}$ solution of  problem \eqref{iv-nnls}.
	%$$
	%q(x,t)=2\I\lim_{k\rightarrow\infty}k\, M_{12}(x,t,k),
	%$$
\end{definition}
\begin{remark}\label{RH-weak-H11-weak}
	Suppose that the RH problem associated to \eqref{iv-nnls}  has a solution for all parameters
	$x$ and $t$ in a band
	$(x,t)\in\mathbb{R}\times[-T_1,T_2]$, $T_j>0$, $j=1,2$.
	Then the resulting pointwise $H^{1,1}$ solution coincides with the conventional weak $H^{1,1}$ solution \eqref{Duhamel} for $t\in[-T_1,T_2]$.
	This fact follows from the arguments used in the proof of Theorem \ref{H-gl-sol}.
	Notice, however, that if
	a (local) weak $H^{1,1}$ solution \eqref{Duhamel}
	exists, this does not imply automatically  the solvability of the
	associated RH problem and, consequently,
	does not guarantee the existence of a pointwise $H^{1,1}$ solution.
\end{remark}
Now let us show that for a class of initial data (which can be viewed as perturbations of exact soliton solutions), the  solution of
\eqref{iv-nnls}
in the sense of Definition \ref{def-H11} can be expressed
as  a limit in $H^{1,1}$  of the blowing-up Schwartz solutions (see Definition \ref{blow-up-S}).
\begin{theorem}\label{th-H11-one-sol}
	(Pointwise $H^{1,1}$ solution: one soliton).
	Consider the Cauchy problem \eqref{iv-nnls} with $\sigma=1$ and initial data $q_0(x)\in H^{1,1}(\mathbb{R})$ satisfying properties 1. and 2. in Theorem \ref{th-class-one-sol}.
	Let $\mathcal{B}$ be defined by \eqref{B-one-sol}.
	
	Then $\mathcal{B}$ satisfies property (i) in Theorem \ref{th-class-one-sol} and
	\begin{enumerate}[(i)]
		%\item
		%$\mathcal{B}$ is a closed set, which lies in the band $\{(x,t)\in\mathbb{R}^2:|x|<R\}$ for some $R>0$.
		\item
		$q(x,t)$ defined by \eqref{q(x,t)} via the solution of the  basic RH problem,  is a pointwise $H^{1,1}$ solution for all $(x,t)\in\mathbb{R}^2\setminus\mathcal{B}$, while  the solution blows-up at all $(x,t)\in\mathcal{B}$: $q(x,t)=\infty$.
		\item
		Let
		$\mathcal{B}_t=\{x\in\mathbb{R}:
		(x,t)\in\mathcal{B}\}$,
		%($\mathcal{B}_t$ is finite, see Remark \ref{B-ray})
		$\mathcal{B}_t^\varepsilon=\{x\in\mathbb{R}: \mathrm{dist}(x,\mathcal{B}_t)<\varepsilon\}$
		and
		$\hat{\mathcal{B}}_t^\varepsilon=
		\cup_{|t^{\prime}-t|<\varepsilon}
		\mathcal{B}_{t^{\prime}}^\varepsilon$
		(here if $\mathcal{B}_t$ is an empty set, then $\mathcal{B}_t^\varepsilon$ is also empty).
		Then for any $\varepsilon>0$ and
		for any $q_{0n}(x)\in\mathcal{S}(\mathbb{R})$ such that $q_{0n}\to q_0$ in $H^{1,1}(\mathbb{R})$, the corresponding blowing-up Schwartz solutions $q_{n}(x,t)$ approximate, uniformly w.r.t.\,\,$t$, the  pointwise $H^{1,1}$ solution $q(x,t)$
		in the following sense:
		%in the following sense:
		\begin{equation}\label{unif-bl-up}
		\sup\limits_{t\in[-T,T]}
		\|q_n(\cdot,t)-q(\cdot,t)\|
		_{H^{1,1}(\mathbb{R}
		\setminus\hat{\mathcal{B}}_t^\varepsilon)}
		\to 0,\quad
		n\to\infty,
		\end{equation}
		for all $T>0$.
	\end{enumerate}
	%$g_j(x,t)$ and $h_j(x,t)$, $j=1,2$ are defined by \eqref{g-j-h-j} and
	%\begin{equation}
	%\mathcal{B}=\{(x,t)\in\mathbb{R}^2:
	%g_1(x,t)h_2(x,t)-g_2(x,t)h_1(x,t)=0\}.
	%\end{equation}
	
	Moreover, $q(x,t)$ has the asymptotics \eqref{one-sol-gen-th} as $x$ and $t$ become large.
\end{theorem}
\begin{proof}
	Using Blaschke-Potapov factors similarly as in Theorem \ref{th-class-one-sol}, we obtain for $q(x,t)$ the same  representation as \eqref{q-qsol-qreg}:
	\begin{equation}\label{q-qsol-qreg-w}
	q(x,t)=\tilde{q}^{\mathrm{sol}}_1(x,t)
	+q^{\mathrm{reg}}(x,t),
	\end{equation}
	where $\tilde{q}^{\mathrm{sol}}_1$ and $q^{\mathrm{reg}}$ are given by \eqref{q-1-sol-qreg}.
	In view of condition 2 of the Theorem, the RH problem for $M^{\mathrm{reg}}$ is solvable for all $(x,t)\in\mathbb{R}^2$.
	Therefore, from \eqref{M-reg} it follows that $M$ has a solution for all $(x,t)\in\mathbb{R}^2\setminus\mathcal{B}$
	and thus  $q(x,t)$ determined by \eqref{q(x,t)} is a pointwise $H^{1,1}$ solution (see Definition \ref{blow-up-H11}) with blow-up points in $\mathcal{B}$.
	
	As in Theorem \ref{th-class-one-sol}, using Proposition \ref{M-fixed-k}, Item 1 and the asymptotics $q^{\mathrm{reg}}=(M^{\mathrm{reg}}(x,t,k_0)-I)=\osmall(1)$ as $t\to\infty$ along any ray $x/t=const$ (the latter follows from the analysis presented in \cite{LYF22} in the case without discrete spectrum, where the authors use the Dbar approach \cite{MM08}), we obtain that $q(x,t)$ has the asymptotic behavior \eqref{one-sol-gen-th} and thus $\mathcal{B}$ lies in the band $\{(x,t)\in\mathbb{R}^2:|x|<R\}$ for some $R>0$.
	Moreover, from Remark \ref{cont-M} it follows that the function $g_1h_2-g_2h_1$ is continuous and thus $\mathcal{B}$ is closed.
	
	Now we prove \eqref{unif-bl-up}.
	Let $q_n=\tilde{q}^{\mathrm{sol}}_{1,n}
	+q^{\mathrm{reg}}_n$, where
	(cf.\,\,\eqref{q-qsol-qreg} and \eqref{q-qsol-qreg-w})
	$$
	\tilde{q}^{\mathrm{sol}}_{1,n}=-2(\rho_{1,1}-\rho_{2,1})P_{12,n}(x,t),
	$$
	with $P_{12,n}=
	\frac{g_{1,n}h_{1,n}}
	{g_{1,n}h_{2,n}-g_{2,n}h_{1,n}}$.
	From Proposition \ref{r-q-Lip} we have that (cf.\,\,\eqref{q_n-r_n})
	$$
	\sup\limits_{t\in[-T,T]}
	\|q_{n}^{\mathrm{reg}}(\cdot,t)
	-q^{\mathrm{reg}}(\cdot,t)\|_{H^{1,1}(\mathbb{R})}
	\to 0,\quad n\to\infty.
	$$
	Applying the same arguments as in Remark \ref{cont-M} we conclude that
	$(g_{1,n}h_{2,n}-g_{2,n}h_{1,n})$
	does not have zeros outside an $\varepsilon$-neighborhood of the zeros of
	$(g_1h_2-g_2h_1)$ for all $n\geq N$, where $N=N(\varepsilon)>0$ is large enough.
	Therefore, for all $t\in[-T,T]$ the functions
	$(g_{1,n}h_{2,n}-g_{2,n}h_{1,n})^{-1}(\cdot,t)$
	and
	$(g_1h_2-g_2h_1)^{-1}(\cdot,t)$ are uniformly bounded for $(\cdot)\in\mathbb{R}\setminus\hat{\mathcal{B}}_t^{\varepsilon}$. Thus, combining  the fact that $H^{1,1}$ is an algebra with Item 1 of Proposition \ref{M-fixed-k} we have
	$$
	\sup\limits_{t\in[-T,T]}
	\|P_{12,n}(\cdot,t)
	-P_{12}(\cdot,t)
	\|_{H^{1,1}(\mathbb{R}
	\setminus\hat{\mathcal{B}}_t^{\varepsilon})}\to 0,
	\quad n\to\infty,
	$$
	which implies \eqref{unif-bl-up}.
\end{proof}

%\begin{remark}
%	The set $\mathcal{B}$ of blow-up points satisfies the
%\end{remark}
Similarly to   Schwartz blow-up solutions, we can generalize Theorem \ref{th-H11-one-sol} to the case with any number of solitons (cf.\,\,Theorem \ref{th-class-L-sol}).
\begin{theorem}\label{th-H11-L-sol}
	(Pointwise $H^{1,1}$ solution: $L$ solitons)
	Consider the Cauchy problem \eqref{iv-nnls} with the initial data $q_0(x)\in H^{1,1}(\mathbb{R})$ satisfying properties 1. and 2. in Theorem \ref{th-class-L-sol}.
	Let $\mathcal{B}$ be defined by \eqref{B-L-sol}.
	
	Then $\mathcal{B}$ satisfies property (i) in Theorem \ref{th-class-L-sol} and
	\begin{enumerate}[(i)]
		%\item
		%$\mathcal{B}$ is a closed set, which in the case $N=0$ lies in a band $\{(x,t)\in\mathbb{R}^2:|x|<R\}$ for some $R>0$, while for $N>0$ it lies in a sector $\{(x,t)\in\mathbb{R}^2:|t|>\alpha|x|-C\}$ for some $0<\alpha<\frac{1}{4\max\limits_{j,m}|\Re\zeta_{j,m}|}$ and $C>0$.
		\item
		$q(x,t)$ defined by \eqref{q(x,t)} via the solution of the  basic RH problem is a pointwise $H^{1,1}$ solution for all $(x,t)\in\mathbb{R}^2\setminus\mathcal{B}$, while  the solution blows-up at all $(x,t)\in\mathcal{B}$: $q(x,t)=\infty$.
		\item
		For any $q_{0n}(x)\in\mathcal{S}(\mathbb{R})$ such that $q_{0n}\to q_0$ in $H^{1,1}(\mathbb{R})$, the corresponding blowing-up Schwartz solutions $q_{n}(x,t)$ approximate $q(x,t)$ uniformly w.r.t.\,\,$t$
		in the sense \eqref{unif-bl-up}.
	\end{enumerate}
	
	Moreover, $q(x,t)$ has the asymptotics \eqref{L-sol-gen-th} as $x$ and $t$ become large.
\end{theorem}
\begin{proof}
	Follows  the same arguments as in Theorems \ref{th-class-L-sol} and \ref{th-H11-one-sol}.
\end{proof}
\begin{remark} (Concluding remarks to Theorem \ref{th-H11-L-sol})
	\label{concl-th-2}
	\begin{enumerate}[(i)]
		\item
		%\label{cons-H11}
		Arguing similarly as in Remark \ref{concl-th-1}, Item (i), we conclude that a pointwise $H^{1,1}$ solution derived in Theorem \ref{th-H11-L-sol} via the basic RH problem is a conservative solution, i.e.,
		for any $t\in\mathbb{R}$ such that $(\mathbb{R}\times\{t\})\cap\mathcal{B}=\emptyset$, the quantities
		$I_1(t)$, $I_2(t)$ and $I_3(t)$ are constants: $I_j(t)=I_j(0)$, $j=1,2,3$.
		In particular,  the ``mass'' and ``energy'', see \eqref{cons-mass} and \eqref{cons-energy}, are conserved:
		$$
		\tilde{M}[q_0(\cdot)]=\tilde{M}[q(\cdot,t)],\quad
		\tilde{E}[q_0(\cdot)]=\tilde{E}[q(\cdot,t)].
		$$
		\item
		Since $\mathcal{B}$ is closed, there exists $\varepsilon=\varepsilon(q_0)>0$ such that
		$\mathcal{B}\cap\{(x,t)\in\mathbb{R}^2:|t|<\varepsilon\}=\emptyset$.
		Moreover, the pointwise $H^{1,1}$ solution in Theorem \ref{th-H11-L-sol} coincides with the weak $H^{1,1}$ solution \eqref{Duhamel} on the maximal interval $(-T_{1,max},T_{2,max})$, $T_{j,max}>0$ of existence of the latter, see Remark \ref{RH-weak-H11-weak}.
		\item
		Recall that the $L$ soliton solutions correspond to zero reflection coefficients $r_j\equiv 0$.
		In view of inequalities \eqref{L1-H11} and
		\eqref{suff-cond-L-sol}, if
		\begin{equation}\label{suff-cond-L-sol-H11}
		2\|q_0-q_{0,L}^{\mathrm{sol}}\|
		_{H^{1,1}}
		\left\{
		2C_1^{\infty}
		\left(\|q_0\|_{H^{1,1}}
		+\|q_{0,L}^{\mathrm{sol}}\|
		_{H^{1,1}}\right)
		+C_2^{\infty}
		I_0\left(4\|q_{0,L}^{\mathrm{sol}}\|
		_{H^{1,1}}\right)
		\right\}
		<d,
		\end{equation}
		where the initial $L$ soliton profile $q_{0,L}^{\mathrm{sol}}(x)$ and positive constants $C_j^{\infty}$, $j=1,2$ and $d$ are given in Proposition \ref{prop-suff-L-sol}, then the spectral functions associated with the initial data $q_0(x)\in H^{1,1}(\mathbb{R})$ satisfy conditions 1 and 2 in Theorem \ref{th-H11-L-sol}.
		%the small $H^{1,1}$ perturbations of soliton profiles satisfy Items 1. and 2. in Theorem \ref{th-H11-L-sol} (cf.\,\,Remark \ref{concl-th-1}, Item (iii)).
	\end{enumerate}
\end{remark}
\begin{remark}\label{char-B}
	(Description of  $\mathcal{B}$)
	The set of blow-up points $\mathcal{B}$ defined in Theorems \ref{th-class-L-sol} and \ref{th-H11-L-sol} consists of $(x^\prime,t^\prime)$ which satisfy the system of two equations of the form
	$\left\{\begin{smallmatrix}
	F_1(x,t)=0\\
	F_2(x,t)=0
	\end{smallmatrix}\right.,$
	with real $F_j$, $j=1,2$ (see \eqref{B-L-sol}).
	Therefore, if for all $(x^\prime,t^\prime)\in\mathcal{B}$ the Jacobian
	$\left|\begin{smallmatrix}
	\partial_xF_1(x^\prime,t^\prime)&
	\partial_tF_1(x^\prime,t^\prime)\\
	\partial_xF_2(x^\prime,t^\prime)&
	\partial_tF_2(x^\prime,t^\prime)
	\end{smallmatrix}\right|$ exists and is not equal to zero, then, by the Preimage theorem, $\mathcal{B}$ is a manifold of dimension zero.
	Consequently, in this case, $\mathcal{B}$ is a closed discrete set.
\end{remark}

\section*{Acknowledgments}
We thank the anonymous referees for valuable comments which allowed us to improve the manuscript.
Dmitry Shepelsky acknowledges support from the National Academy of Sciences of Ukraine, Grant Agreement No. 0122U111111.

\appendix
\begin{appendices}
%\appendixname{}
%\specialsection{}
%\section{\hspace{-3mm} }
\section{Appendix A}
\label{AppA}
\begin{proposition}\label{inv-x}
	Let $M(x,t,k)$ be the solution of the basic RH problem
	and assume that
	it has a partial derivative w.r.t. $x$ for some $x,t\in\mathbb{R}$.
	Then $M(x,t,k)e^{-\I kx\sigma_3}$ satisfies the $x$-equation of the Lax pair \eqref{Lax} with $q(x,t)$ defined by \eqref{q(x,t)}.
\end{proposition}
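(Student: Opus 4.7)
The plan is to establish the matrix identity $\partial_x M + \I k [\sigma_3, M] = U M$, which, after right-multiplication by $e^{-\I k x \sigma_3}$, is exactly the $x$-part of the Lax pair for $\Phi := M e^{-\I k x \sigma_3}$. I would set $F(x,t,k) := \partial_x M + \I k [\sigma_3, M]$ and prove $F = UM$ by showing that $(F - UM) M^{-1}$ is entire in $k$ and vanishes at $k = \infty$, then invoking Liouville's theorem.

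First, I would verify that $F$ has the same jump as $M$ across $\mathbb{R}$. The $x$-dependence of $J$ in \eqref{ist4.3} enters only through the factors $e^{\pm(2\I k x + 4\I k^2 t)}$, which gives the key identity $\partial_x J = -\I k [\sigma_3, J]$. Differentiating $M_+ = M_- J$ in $x$ and rearranging then yields $F_+ = F_- J$; since also $(UM)_+ = U M_- J = (UM)_- J$, the quotient $(F - UM) M^{-1}$ has no jump on $\mathbb{R}$.

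Second, I would show that $(F - UM) M^{-1}$ is regular at every discrete spectrum point. Near, say, $k_0 = \I \rho_{1,s}$, the residue condition \eqref{res-upper} forces $M^{[1]}(x,t,k) = c(x,t)(k - k_0)^{-1} M^{[2]}(x,t,k_0) + \ord(1)$ with $c(x,t)$ carrying the exponential $e^{-2\rho_{1,s} x - 4\I \rho_{1,s}^2 t}$, so that $\partial_x c = 2\I k_0 c$. Writing $\I k = \I k_0 + \I (k - k_0)$ in the definition of $F$, the pole contribution $\partial_x c \cdot M^{[2]}(k_0)/(k-k_0)$ coming from $\partial_x M^{[1]}$ cancels against the pole contribution $\I k_0 [\sigma_3,\cdot]$ applied to the residue of $M^{[1]}$, leaving a residue of $F^{[1]}$ still proportional to $M^{[2]}(k_0)$ — which matches the residue structure of $(UM)^{[1]}$ (since $U$ is $k$-independent and regular). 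The check for $M^{[2]}$ and for the zeros of $a_2$ in $\mathbb{C}^-$ is symmetric.

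Finally, I would compute the large-$k$ asymptotics. Expanding $M = I + M_1/k + \ord(k^{-2})$, one has $F = \I [\sigma_3, M_1] + \ord(k^{-1})$, while \eqref{q(x,t)} gives $(M_1)_{12} = -\I q(x,t)/2$ and the symmetry \eqref{M-sym-as} together with \eqref{q(-x,t)} gives $(M_1)_{21} = -\I \sigma \overline{q(-x,t)}/2$, whence $\I [\sigma_3, M_1] = U$. Thus $(F - UM) M^{-1} \to 0$ as $k\to\infty$, and the Liouville argument forces $(F-UM) M^{-1} \equiv 0$, i.e.\ $F = UM$. The main technical hurdle is the residue bookkeeping in the second step: the cancellation of the first-order poles of $F$ at each $k_0$ depends crucially on the precise form of the exponential $x$-factors in the residue conditions \eqref{res-upper}–\eqref{res-lower}, and this is where the structure of the norming constants pays off.
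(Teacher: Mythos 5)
Your plan is the same Liouville-type argument the paper uses, just organized differently: the paper sets $\hat M=Me^{-\I kx\sigma_3}$ so that the jump \emph{and} the residue data become $x$-independent, applies Liouville to $\partial_x\hat M\cdot\hat M^{-1}$ (an entire, linearly growing function), and identifies $U$ from the coefficient $Q_1$ and the symmetry \eqref{M-sym-as}; you keep $M$, use $\partial_xJ=-\I k[\sigma_3,J]$, identify $U$ first via \eqref{q(x,t)}, \eqref{q(-x,t)}, \eqref{M-sym-as}, and apply Liouville to $(F-UM)M^{-1}$. Your steps 1 and 3 are correct and equivalent to the paper's computation.

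The residue step --- the one you yourself flag as the main hurdle --- is not right as written, on two counts. (i) Nothing cancels, and the residue of $F^{[1]}$ is \emph{not} proportional to $M^{[2]}(x,t,k_0)$. Differentiating $\underset{k=k_0}{\operatorname{Res}}\,M^{[1]}=c(x,t)M^{[2]}(x,t,k_0)$ in $x$ produces both $(\partial_xc)M^{[2]}(k_0)$ and $c\,\partial_xM^{[2]}(\cdot,k_0)$ (you drop the second term); adding the pole part of $\I k[\sigma_3,M]^{[1]}=\I k(\sigma_3-I)M^{[1]}$ and using $\partial_xc=2\I k_0c$ gives $\underset{k=k_0}{\operatorname{Res}}\,F^{[1]}=c\bigl(\partial_xM^{[2]}(k_0)+\I k_0(\sigma_3+I)M^{[2]}(k_0)\bigr)=c\,F^{[2]}(k_0)$. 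The correct --- and exactly sufficient --- statement is therefore that $F$, and hence $F-UM$ (since $\underset{k=k_0}{\operatorname{Res}}\,(UM)^{[1]}=c\,(UM)^{[2]}(k_0)$ because $U$ is $k$-independent), satisfies the \emph{same} residue relation as $M$, with the same $c$. (ii) Even with this, regularity of $(F-UM)M^{-1}$ at $k_0$ is not automatic, because $M^{-1}=\operatorname{adj}M$ itself has a simple pole at $k_0$, so the product is a priori doubly singular. The standard fix: with $G(k)=\left(\begin{smallmatrix}1&0\\ -\frac{c}{k-k_0}&1\end{smallmatrix}\right)$, both $MG$ and $(F-UM)G$ are analytic at $k_0$ (the matching residue relations remove the poles of the first columns), and $\det(MG)=1$, so $(F-UM)M^{-1}=\bigl[(F-UM)G\bigr](MG)^{-1}$ is analytic there; the points $\zeta_{1,s}$, $-\bar\zeta_{1,s}$ and the zeros of $a_2$ in $\mathbb{C}^-$ (where the second column carries the pole) are handled by the mirror computation. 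With these two corrections your argument closes; to be fair, the paper is equally terse at this point, as it simply asserts that $\partial_x\hat M\cdot\hat M^{-1}$ has no singular points, the conjugation having made the residue data $x$-independent so that the same factorization trick applies verbatim.
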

\begin{proof}
	Let $M(x,t,k)=I+\frac{Q_1(x,t)}{k}+\ord(k^{-2})$ as $k\to\infty$.
	In these notations $q(x,t)=2\I Q_{1,12}(x,t)$ and from symmetry \eqref{M-sym-as} it follows that $Q_{1,21}(x,t)=-\sigma\overline{Q_{1,12}(-x,t)}$ and $\overline{q(-x,t)}=2\I\sigma Q_{1,21}(x,t)$ (cf.\,\,\eqref{q(-x,t)}).
	
	Introduce $\hat M(x,t,k)=M(x,t,k)e^{-\I kx\sigma_3}$.
	Then $\hat M$ satisfies the RH problem with the jump matrix and residue conditions which do not depend on $x$. Therefore $\partial_x\hat M\cdot\hat M^{-1}$ has neither jump nor singular points for $k\in\mathbb{C}$,
	while as $ k\to\infty$
	%following behavior for the large $k$:
	\begin{equation}\label{M-x-eq}
	\partial_x\hat M(x,t,k)\cdot\hat M^{-1}(x,t,k)=-\I k\sigma_3-\I[Q_1(x,t),\sigma_3]+\ord(k^{-1}).
	\end{equation}
	By the Liouville theorem we have  $\partial_x\hat M+\I k\sigma_3\hat M=
	\left(
	\begin{smallmatrix}
	0& 2\I Q_{1,12}(x,t)\\
	-2\I Q_{1,21}(x,t)& 0
	\end{smallmatrix}
	\right) \hat M$.
\end{proof}
\begin{corollary}
	Let $M(x,t,k)=I+\frac{Q_1(x,t)}{k}+\frac{Q_2(x,t)}{k^2}+\ord(k^{-3})$ as $k\to\infty$. Then from the $x$-equation we have
	\begin{equation}\label{Q_2}
	\partial_x{Q_1}(x,t)+\I[\sigma_3,Q_2(x,t)]=
	\I[\sigma_3,Q_1(x,t)]Q_1(x,t).
	\end{equation}
\end{corollary}
\begin{proposition}\label{inv-t}
	Let $M(x,t,k)$ be the solution of the basic RH problem and assume that it has a partial derivative w.r.t. $t$ for some $x,t\in\mathbb{R}$.
	Then $M(x,t,k)\break e^{-2\I k^2t\sigma_3}$ satisfies the $t$-equation of the Lax pair \eqref{Lax} with $q(x,t)$ defined by \eqref{q(x,t)}.
\end{proposition}
\begin{proof}
	Introducing $\check{M}(x,t,k)=M(x,t,k)e^{-2\I k^2t\sigma_3}$ and arguing similarly as in\break Proposition \ref{inv-x} we arrive at
	\begin{equation}\label{M-t-eq}
	\begin{split}
	\partial_t\check M(x,t,k)\check M^{-1}(x,t,k)=&
	-2\I k^2\sigma_3+2\I [\sigma_3,Q_1(x,t)](kI-Q_1(x,t))\\
	&+2\I[\sigma_3,Q_2(x,t)],
	\end{split}
	\end{equation}
	for $k\in\mathbb{C}$, where $M(x,t,k)=I+\frac{Q_1(x,t)}{k}+\frac{Q_2(x,t)}{k^2}+\ord(k^{-3})$ as $k\to\infty$.
	Using \eqref{Q_2} we conclude that $\check M$ solves the $t$-equation in \eqref{Lax}.
\end{proof}

%\section{\hspace{-3mm} }
\section{Appendix B}
\label{AppB}
\begin{proposition}\cite{BC84}\label{r-S}
	Let $q_0(x)\in\mathcal{S}(\mathbb{R})$ and let the corresponding $a_j(k)$, $j=1,2$ satisfy Assumption 1.
	Then $r_j(k)\in\mathcal{S}(\mathbb{R})$, $j=1,2$.
\end{proposition}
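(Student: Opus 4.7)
The plan is to prove the Schwartz property of $r_j(k)=b(k)/a_1(k)$ and $\overline{b(-k)}/a_2(k)$ in three steps: establish smoothness of $b$ and $a_j-1$ on $\mathbb{R}$, establish rapid decay (with all derivatives) of these same quantities, and then use Assumption~1 to transfer these properties to the reciprocals $1/a_j$ and finally to the quotients $r_j$.

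First I would start from the integral representations obtained by passing to the limit $x\to+\infty$ in \eqref{psi-1-3} and \eqref{psi24}:
$$
a_1(k)-1=\int_{-\infty}^{+\infty}q_0(y)\psi_3(y,0,k)\,dy,\qquad
b(k)=-\sigma\int_{-\infty}^{+\infty}e^{-2\I ky}\overline{q_0(-y)}\psi_1(y,0,k)\,dy,
$$
and the analogous formulas for $a_2-1$ and $\overline{b(-k)}$ coming from \eqref{psi24}. Smoothness of $\psi_{1,3}(y,0,k)$ in $(y,k)\in\mathbb{R}\times\overline{\mathbb{C}^+}$ follows at once from the Neumann series for the Volterra equations and the $C^\infty$ regularity of $q_0$; uniform $L^\infty$ bounds on $\psi_{1,3}$ (on the support where $q_0$ is effectively non-negligible) follow from the Bessel-type estimate already used in Proposition~\ref{prop-suff}. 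Differentiating the Volterra equations $m$~times in $k$ brings down polynomial factors in~$y$, which remain integrable against $q_0\in\mathcal{S}$; this gives smoothness of $b$ and $a_j-1$ on $\mathbb{R}$ together with uniform bounds on $\partial_k^m b$ and $\partial_k^m a_j$ for every $m\geq 0$.

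Next, to obtain rapid decay in $k$, I would repeatedly integrate by parts in the oscillatory integrals, exploiting the kernels $e^{\pm 2\I k(z-y)}$ that appear in the Volterra equations. Each integration by parts produces a factor $(2\I k)^{-1}$ and differentiates either $q_0$ (which stays Schwartz) or $\psi_{1,3}$ (which, upon inserting the Volterra equation, yields another uniformly bounded smooth factor times $q_0$ or $\overline{q_0(-\cdot)}$). Iterating this $N$ times, and combining with the polynomial-in-$y$ weights produced by the $k$-derivatives, I would arrive at
$$
\left|\partial_k^m b(k)\right|+\left|\partial_k^m(a_j(k)-1)\right|\leq C_{m,N}(1+|k|)^{-N},\qquad k\in\mathbb{R},
$$
for every $m,N\in\mathbb{N}\cup\{0\}$. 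This establishes $b,\,a_j-1\in\mathcal{S}(\mathbb{R})$.

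Finally, Assumption~1 provides the essential ingredient that $a_j(k)$ has no zeros on $\mathbb{R}$; together with $a_j(k)\to 1$ as $|k|\to\infty$, this yields a strictly positive lower bound $\inf_{k\in\mathbb{R}}|a_j(k)|>0$. Hence $1/a_j$ is smooth on $\mathbb{R}$, and expanding $1/a_j=1/(1+(a_j-1))$ via the Leibniz rule and the previous step shows $1/a_j-1\in\mathcal{S}(\mathbb{R})$. Since $\mathcal{S}(\mathbb{R})$ is a multiplicatively closed ideal inside $C^\infty_b(\mathbb{R})$, the products $r_1=b\cdot(1/a_1)$ and $r_2=\overline{b(-\cdot)}\cdot(1/a_2)$ belong to $\mathcal{S}(\mathbb{R})$, as claimed. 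The main bookkeeping obstacle I would expect is the joint control of mixed $(y,k)$-derivatives of the Volterra iterates in the integration-by-parts step, which requires an induction on $m+N$ to handle cleanly, but no genuinely new idea beyond the standard AKNS-type argument of \cite{BC84}.
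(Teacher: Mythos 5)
Your proposal is correct and follows essentially the same route as the paper: both rest on the Volterra/Neumann-series representations of $\psi_1,\psi_3$ (equivalently the explicit iterated oscillatory integrals for $a_j-1$ and $b$) to get smoothness and rapid decay from $q_0\in\mathcal{S}(\mathbb{R})$, and then use Assumption~1 (no real zeros of $a_j$, together with $a_j\to1$) to divide and conclude $r_j\in\mathcal{S}(\mathbb{R})$. The only difference is organizational — you integrate by parts on the Volterra solutions while the paper writes out the Neumann series termwise, citing \cite{BC84} — which does not change the substance of the argument.
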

\begin{proof}
	Follows from the representations of $a_j$ and $b$ obtained from the Neumann series of \eqref{psi-1-3} and \eqref{psi24}:
	\begin{equation}\label{a-b-int}
	a_j(k)=1+\sum\limits_{n=1}^{\infty}
	A_{j,n}(k),\quad j=1,2,\quad
	b(k)=\sum\limits_{n=0}^{\infty}
	B_n(k),
	\end{equation}
	where
	\begin{equation*}
	\begin{split}
	%\nonumber
	& A_{1,n}(k)=(-\sigma)^n\int_{-\infty}^{+\infty}d y_{1}
	\int_{-\infty}^{y_1}dy_2\dots
	\int_{-\infty}^{y_{2n-1}}dy_{2n}\\
	%\nonumber
	&\qquad\qquad\,\,\times\prod\limits_{j=1}^{n}
	q_0(y_{2j-1})\overline{q_0(-y_{2j})}\cdot
	\exp\{2\I k\sum\limits_{j=1}^{2n}(-1)^{j+1}y_j\},\\
	%\nonumber
	& A_{2,n}(k)=(-\sigma)^n\int_{-\infty}^{+\infty}d y_{1}
	\int_{-\infty}^{y_1}dy_2\dots
	\int_{-\infty}^{y_{2n-1}}dy_{2n}\\
	%\nonumber
	&\qquad\qquad\,\,\times\prod\limits_{j=1}^{n}
	\overline{q_0(-y_{2j-1})}q_0(y_{2j})\cdot
	\exp\{2\I k\sum\limits_{j=1}^{2n}(-1)^{j}y_j\},\\
	%\nonumber
	& B_0(k)=\int_{-\infty}^{+\infty}e^{-2\I ky}
	\sigma\overline{q_0(-y)}\,dy,\\
	%\nonumber
	& B_n(k)=(-\sigma)^n\int_{-\infty}^{+\infty}dy_1
	\int_{-\infty}^{y_1}dy_2\dots
	\int_{-\infty}^{y_{2n}}dy_{2n+1}
	\sigma \overline{q_0(-y_{2n+1})}\\
	%\nonumber
	&\qquad\qquad\,\,\times\prod\limits_{j=1}^{n}
	\overline{q_0(-y_{2j-1})}q_0(y_{2j})\cdot
	\exp\{2\I k\sum\limits_{j=1}^{2n+1}(-1)^{j}y_j\}.
	\end{split}
	\end{equation*}
\end{proof}

%\specialsection{}
\section{Appendix C}
%\section{\hspace{-3mm} }
\label{AppC}
\begin{proposition}\label{Hss}\cite{DZ03, Z98}
	Consider the basic RH problem with $r_j(k)\in H^{1,1}(\mathbb{R})$ and without residue conditions.
	Suppose that this RH problem has an $L^2$ solution.
	Then $q(x,t)$ defined by \eqref{q(x,t)} also belongs to $H^{1,1}(\mathbb{R})$ for all fixed $t\in\mathbb{R}$.
\end{proposition}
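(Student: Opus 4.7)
My plan is to follow the Beals--Coifman / Zhou scheme of \cite{Z98, DZ03}, starting from the integral representation \eqref{M-int-eq} of the solution of the basic RH problem. Under the standing hypothesis, $\mu^{\#} := \mu - I = (I - \mathcal{C}_w)^{-1}\mathcal{C}_w(I) \in L^2(\mathbb{R}_\zeta)$ is well-defined for each fixed $(x,t)$, with $L^2$-norm controlled uniformly in $(x,t)$ because $\mathcal{C}_w$ depends on $(x,t)$ only through the unimodular phases $e^{\pm 2\I\zeta x \pm 4\I\zeta^2 t}$. Reading off the coefficient of $1/k$ in \eqref{M-int-eq} and using \eqref{q(x,t)} yields the explicit representation
\begin{equation*}
q(x,t) = -\frac{\sigma}{\pi}\int_{\mathbb{R}} \bigl(1 + \mu^{\#}_{11}(x,t,\zeta)\bigr)\, r_2(\zeta)\, e^{-2\I\zeta x - 4\I\zeta^2 t}\, d\zeta =: q_{\mathrm{free}}(x,t) + q_{\mathrm{int}}(x,t),
\end{equation*}
which I split into the \emph{free} part (where $\mu^{\#}$ is dropped) and the \emph{interaction} remainder.

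First I would bound $\|q(\cdot,t)\|_{L^2}$. The term $q_{\mathrm{free}}(\cdot,t)$ is, up to rescaling, the Fourier transform of $r_2(\zeta)e^{-4\I\zeta^2 t}$, so Plancherel gives $\|q_{\mathrm{free}}(\cdot,t)\|_{L^2} \leq C\|r_2\|_{L^2}$, independent of $t$. The interaction term is a duality pairing of the uniformly $L^2$-bounded family $\mu^{\#}_{11}(x,t,\cdot)$ against $r_2(\cdot)e^{-2\I(\cdot)x - 4\I(\cdot)^2 t}$; writing $\mu^{\#} = (I - \mathcal{C}_w)^{-1}\mathcal{C}_w(I)$ and using the $L^2$-boundedness of $\mathcal{C}_\pm$ one obtains the required $L^2_x$ bound through Fourier estimates on bilinear products of $L^2$ data with bounded kernels.

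Next, for $\|xq(\cdot,t)\|_{L^2}$ I would multiply the integrand by $x$, use the identity $x e^{-2\I\zeta x} = \tfrac{\I}{2}\partial_\zeta e^{-2\I\zeta x}$, and integrate by parts in $\zeta$; this yields terms containing $\partial_\zeta r_2 \in L^2$ (via $r_j \in H^1$), an extra factor $-8\I\zeta t\, e^{-4\I\zeta^2 t}$ (producing a $t$-dependent constant), and $\partial_\zeta \mu^{\#}_{11}$. For $\|\partial_x q(\cdot,t)\|_{L^2}$, differentiating under the integral sign brings down a factor $-2\I\zeta$, so the free contribution is controlled by $\|\zeta r_2\|_{L^2}$, finite since $r_j \in H^{0,1}$, while the interaction contribution reduces to bounding $\partial_x \mu^{\#}$ in $L^2$.

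The main obstacle is establishing the auxiliary $L^2$ bounds on $\partial_\zeta\mu^{\#}$ and $\partial_x\mu^{\#}$. Both are obtained by formally differentiating $(I - \mathcal{C}_w)\mu^{\#} = \mathcal{C}_w(I)$ in the corresponding variable and inverting $I - \mathcal{C}_w$ on the resulting right-hand side; the right-hand sides are in turn controlled by combining the standard commutation identities relating $\partial_\zeta$ or multiplication by $\zeta$ (resp.\ $x$) with the Cauchy operators $\mathcal{C}_\pm$ on $L^2$, the relations $\partial_x w^\pm = \pm 2\I\zeta w^\pm$ and $\partial_\zeta w^\pm = (\pm 2\I x \pm 4\I\zeta t) w^\pm + \partial_\zeta r_j$-terms, and the hypothesis $r_j \in H^{1,1}$, which is tailored precisely so that both derivative directions close in $L^2$. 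Once these auxiliary estimates are in hand, the three norm bounds assemble into $q(\cdot,t) \in H^{1,1}(\mathbb{R})$ for every fixed $t$.
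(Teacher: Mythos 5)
Your starting point (the representation \eqref{M-int-eq}, $\mu^{\#}=(I-\mathcal{C}_w)^{-1}\mathcal{C}_w(I)$, and the split of $q$ into a free Fourier part and an interaction part) coincides with the paper's $I_1+I_2$ decomposition in \eqref{q-int-1}, and your treatment of $\partial_x q$ (differentiating the singular integral equation, using $\partial_x w^{\pm}=\pm2\I\zeta w^{\pm}$ and $\zeta r_j\in L^2$) is essentially the paper's Step 2. The genuine gap is in the weighted estimate $\|xq(\cdot,t)\|_{L^2}$ for the interaction term. Your plan is to integrate by parts in $\zeta$ and to control $\partial_\zeta\mu^{\#}$ in $L^2$; but $\partial_\zeta w^{\pm}$ contains the factor $(\pm2\I x\pm8\I\zeta t)w^{\pm}$ coming from the phase, so $\|\partial_\zeta\mu^{\#}(x,t,\cdot)\|_{L^2_\zeta}$ grows linearly in $|x|$ and is not uniformly bounded — the hypothesis $r_j\in H^{1,1}$ does not make this ``close in $L^2$''. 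Moreover, even if such uniform bounds held, estimating the $\zeta$-integral by Cauchy--Schwarz pointwise in $x$ only yields $x\,q_{\mathrm{int}}\in L^\infty_x$, not membership in $L^2_x$: what is needed is genuine \emph{decay in $x$} of the interaction term, and your argument never produces it (the same criticism applies to your $L^2_x$ bound on $q_{\mathrm{int}}$ itself, where ``Fourier estimates on bilinear products of $L^2$ data with bounded kernels'' is not a proof).

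The mechanism the paper uses for exactly this point is different and cannot be bypassed: writing $(\mathcal{C}_w(I))_{12}=\mathcal{C}_{+}\bigl(\sigma r_2(\cdot)e^{-2\I(\cdot)x-4\I(\cdot)^2t}\bigr)$, the Fourier-support computation \eqref{C_+f} gives the decay estimate \eqref{C_+-norm}, i.e. $\|\mathcal{C}_{+}(fe^{-2\I kx})\|_{L^2}\lesssim \|f\|_{H^{1,0}}\langle x\rangle^{-1}$, but only for $x\le0$; for $x\ge0$ the Cauchy projection acts with the wrong orientation and no decay results from the factorization \eqref{upp-low}. To cover $x\ge0$ one must pass to the second triangular factorization \eqref{low-upp} and remove its diagonal factor by conjugating with the scalar function $\delta(k)$ of \eqref{delta} (using the vanishing of the winding number), after which the analogous estimate \eqref{C_--norm} with $\mathcal{C}_{-}$ applies. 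This two-factorization step, which is the heart of the Zhou/Deift--Zhou scheme for trading smoothness of $r_j$ for decay of $q$, is entirely absent from your proposal, and without it (or a substitute producing decay of the interaction term as $x\to+\infty$) the claim $q(\cdot,t)\in H^{0,1}(\mathbb{R})$ is not established.
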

\begin{proof}
	Step 1.
	Following the ideas of Theorem 2.1 in \cite{Z98}, we first prove that $q(\cdot,t)\in H^{0,1}(\mathbb{R})$.
	From \eqref{q(x,t)} and \eqref{M-int-eq} it follows that
	\begin{equation}\label{q-int}
	q(x,t)=-\frac{1}{\pi}\left(\int_{-\infty}^{+\infty}
	\mu(x,t,\zeta)(w^+(x,t,\zeta)+w^-(x,t,\zeta))\,d\zeta\right)_{12},
	\end{equation}
	where $w^{\pm}$ are defined in \eqref{upp-low}.
	Solvability of the RH problem is equivalent to existence of a bounded operator $(I-\mathcal{C}_w)^{-1}$ in $L^2$ (see Proposition 4.4 in \cite{Z89-1}), and \eqref{int-mu} implies that  $\mu(x,t,k)=I+(I-\mathcal{C}_w)^{-1}\mathcal{C}_w(I)$.
	Substituting this representation for $\mu$ in
	\eqref{q-int} one concludes that
	\begin{align}\label{q-int-1}
	\nonumber
	q(x,t)&=-\frac{\sigma}{\pi}\int_{-\infty}^{+\infty}r_2(\zeta)
	e^{-2\I\zeta x-4\I\zeta^2 t}\,d\zeta\\
	\nonumber
	&\quad\,-\frac{1}{\pi}\left(\int_{-\infty}^{+\infty}
	(I-\mathcal{C}_w)^{-1}\mathcal{C}_w(I)(w^+(x,t,\zeta)+w^-(x,t,\zeta))\,d\zeta\right)_{12}\\
	&=I_1(x,t)+I_2(x,t).
	\end{align}
	Since $r_j(k)\in H^{1,1}(\mathbb{R})$, the product $e^{\pm4\I k^2t}r_j(k)$ is also in $H^{1,1}(\mathbb{R})$ for all fixed $t$.
	Therefore,  $I_1(\cdot,t)\in H^{1,1}(\mathbb{R})$ by the Fourier transform.
	The second integral in \eqref{q-int-1} can be estimated by the Cauchy-Schwartz inequality:
	\begin{equation}\label{I-2-est}
	|I_2(x,t)|\leq\frac{1}{\pi}
	\|[(I-\mathcal{C}_w)^{-1}\mathcal{C}_w(I)]_{12}\|_{L^2}
	\|r_2(k)\|_{L^2}\leq
	C\|\left(\mathcal{C}_w(I)\right)_{12}\|_{L^2},\,\, C>0,
	\end{equation}
	where $(\mathcal{C}_w(I))_{12}=\mathcal{C}_+
	(\sigma r_2(\cdot)e^{-2\I (\cdot)x-4\I (\cdot)^2t})$.
	Using the inverse Fourier transform
	$
	f(k)=\frac{1}{\sqrt{2\pi}}\int_{-\infty}^{+\infty}
	e^{\I kz}\hat f(z)\,dz
	$
	and the following results of application of the Hilbert transform:
	$$
	H(e^{i\omega k})\equiv
	\frac{1}{\pi}\text{v.p.}\int_{-\infty}^{+\infty}
	\frac{e^{\I\omega\zeta}}{\zeta-k}\,d\zeta=
	\begin{cases}
	-\I e^{\I\omega k},&\omega>0,\\
	\I e^{\I\omega k},&\omega<0,
	\end{cases}
	$$
	we have for any $f\in L^2(\mathbb{R})$:
	\begin{align}\label{C_+f}
	\nonumber
	\mathcal{C}_+(f(k)e^{-2\I kx})&=
	\lim\limits_{\varepsilon\downarrow 0}\frac{1}{2\pi\I}
	\int_{-\infty}^{+\infty}
	\frac{f(\zeta)e^{-2\I\zeta x}}{\zeta-(k+\I\varepsilon)}\,d\zeta\\
	\nonumber
	&=
	\frac{1}{\sqrt{2\pi}}\int_{-\infty}^{+\infty}
	\hat f(z)
	\lim\limits_{\varepsilon\downarrow 0}\frac{1}{2\pi\I}
	\int_{-\infty}^{+\infty}
	\frac{e^{\I\zeta (z-2x)}}{\zeta-(k+\I\varepsilon)}
	\,d\zeta\,dz\\
	&=\frac{1}{\sqrt{2\pi}}
	\int_{-\infty}^{2x}\hat f(z)e^{\I k(z-2x)}\,dz.
	\end{align}
	Assuming that $f\in H^{1,0}(\mathbb{R})$,
	we have from \eqref{C_+f}, by the Plancherel theorem, that for $x\leq0$,
	\begin{equation}\label{C_+-norm}
	\begin{split}
	\|\mathcal{C}_+(f(k)e^{-2\I kx})\|^2_{L^2}=
	\int_{-\infty}^{2x}|\hat{f}(z)|^2\,dz&\leq
	\frac{1}{1+(2x)^2}
	\int_{-\infty}^{2x}(1+(2z)^2)|\hat{f}(z)|^2\,dz\\
	&\leq
	\frac{C\|f\|_{H^{1,0}}^2}{1+x^2},
	\end{split}
	\end{equation}
	which implies that $q(\cdot,t)\in H^{0,1}(-\infty,0)$.
	
	To prove that $q(\cdot,t)\in H^{0,1}(0,+\infty)$, we use another triangular factorization of $J$, cf.\,\,\eqref{upp-low} (we drop the argument in $r_j(k)$):
	\begin{equation}\label{low-upp}
	J(x,t,k)=
	\begin{pmatrix}
	1& 0\\
	\frac{r_1e^{2\I kx+4\I k^2t}}{1+\sigma r_1r_2}& 1\\
	\end{pmatrix}
	\begin{pmatrix}
	1+\sigma r_1r_2& 0\\
	0& \frac{1}{1+\sigma r_1r_2}\\
	\end{pmatrix}
	\begin{pmatrix}
	1& \frac{\sigma r_2e^{-2\I kx-4\I k^2t}}{1+\sigma r_1r_2}\\
	0& 1\\
	\end{pmatrix}.
	\end{equation}
	To get rid of the diagonal factor in \eqref{low-upp}, we introduce $\delta(k)$ by
	\begin{equation}
	\label{delta}
	\delta(k)=\exp\left\{\frac{1}{2\pi i}\int_{-\infty}^{+\infty}\frac{\ln(1+\sigma r_1(\zeta)r_2(\zeta))}{\zeta-k}\,d\zeta\right\},
	\end{equation}
	which  satisfies (i) the jump condition $\delta_+=\delta_-(1+\sigma r_1r_2)$ on $\mathbb{R}$ and (ii) the normalization condition $\delta\to1$ as $k\to\infty$.
	Notice that since the RH problem does not have zeros, $\int_{-\infty}^{+\infty}d\arg (1+\sigma r_1r_2)=0$ (see Proposition \ref{wn}) and the integral in \eqref{delta} converges.
	Then $\tilde M:=M\delta^{-\sigma_3}$ satisfies the jump condition $\tilde M_+=\tilde M_-\tilde J$ with
	\begin{equation}
	\begin{split}
	\tilde{J}(x,t,k)&=
	\begin{pmatrix}
	1& 0\\
	\frac{r_1(k)\delta_-^{-2}(k)e^{2\I kx+4\I k^2t}}
	{1+\sigma r_1(k)r_2(k)}& 1\\
	\end{pmatrix}
	\begin{pmatrix}
	1& \frac{\sigma r_2(k)\delta_+^{2}(k)e^{-2\I kx-4\I k^2t}}
	{1+\sigma r_1(k)r_2(k)}\\
	0& 1\\
	\end{pmatrix}\\
	&=(I-w^-(x,t,k))^{-1}(I+w^+(x,t,k)).
	\end{split}
	\end{equation}
	For such a factorization,
	$(\mathcal{C}_w(I))_{12}=\mathcal{C}_-(w^+_{12})$. Arguing similarly as in the case $x\leq 0$ and taking into account that (cf.\,\,\eqref{C_+f})
	\begin{equation}
	\mathcal{C}_-(f(k)e^{-2\I kx})=\frac{-1}{\sqrt{2\pi}}
	\int_{2x}^{+\infty}\hat f(z)e^{\I k(z-2x)}\,dz,
	\end{equation}
	we have (cf.\,\,\eqref{C_+-norm})
	\begin{equation}\label{C_--norm}
	\|\mathcal{C}_-(f(k)e^{-2\I kx})\|^2_{L^2}\leq
	\frac{C\|f\|_{H^{1,0}}^2}{1+x^2},\,x\geq 0,
	\end{equation}
	and thus $q(\cdot,t)\in H^{0,1}(0,+\infty)$.
	
	%Thus, we prove that $q(\cdot,t)\in H^{0,1}(\mathbb{R})$.
	
	\noindent \textbf{step 2.} Now let us prove that $q(\cdot,t)\in H^{1,0}(\mathbb{R})$ (cf.\,\,Theorem 2.11 in \cite{Z98}).
	Since $\partial_x w^\pm(x,t,\cdot)\in L^2(\mathbb{R})\cap L^\infty(\mathbb{R})$, we have that $\partial_x(I-\mathcal{C}_w)\equiv-\mathcal{C}_{\partial_xw}$ is a bounded operator in $L^2$.
	Therefore, from the fact that
	$$
	\partial_x[(I-\mathcal{C}_w)^{-1}]=
	-(I-\mathcal{C}_w)^{-1}\cdot
	\partial_x(I-\mathcal{C}_w)\cdot
	(I-\mathcal{C}_w)^{-1},
	$$
	the existence of $\partial_x\mu(x,t,\cdot)\in L^2(\mathbb{R})$  follows.
	Using Proposition \ref{inv-x} and that $\mu=M_+(I+w^+)^{-1}$ we arrive at
	\begin{equation}
	\partial_x\mu(x,t,k)=
	-\I k[\sigma_3,\mu(x,t,k)]
	+\I[\sigma_3,U(x,t)]\mu(x,t,k),
	\end{equation}
	and thus (dropping below the arguments of the functions)
	\begin{equation}\label{diff-mu-1-x}
	\partial_x[\mu(w^++w^-)]=
	-\I k[\sigma_3,\mu(w^++w^-)]
	+\I[\sigma_3,U]\mu(w^++w^-),
	\end{equation}
	where  $w^\pm$ is defined by \eqref{upp-low}.
	The latter and \eqref{q-int} imply that $\partial_x q(x,t)\in L^2(\mathbb{R})$ w.r.t.\,\,$x$ for all $t$.
	%\begin{equation}\label{diff-mu}
	%\partial_x^j[\mu(w^++w^-)]=
	%(-\I)^j k^j[\underbrace{
	%\sigma_3,[\sigma_3,\dots,[\sigma_3,\mu(w^++w^-)]]}_j]
	%+\sum\limits_{l=0}^{j-1}k^{l}c_l\mu(w^++w^-),
	%\end{equation}
	%where $c_l$ depends on $Q,\dots\partial_x^{j-1} Q$.
	%From \eqref{q-int} and \eqref{diff-mu} we conclude that $\partial_j q(\cdot,t)\in H^{1,0}(\mathbb{R})$ for $0\leq j\leq s-1$.
	%By the Sobolev embedding theorem $r_j(k)\in L^{\infty}(\mathbb{R})$, so $\partial_s q(\cdot,t)\in L^2(\mathbb{R})$.
\end{proof}
\begin{corollary}\label{x-deriv-M}
	From \eqref{diff-mu-1-x} it follows that if $r_j\in H^{0,1}\cap L^\infty$, then the $L^2$ solution $M(x,t,k)$ of the RH problem has a partial derivative w.r.t.\,\,$x$.
	%:
	%which belongs to there exists
	%$\partial_x M(x,t,\cdot)\in L^2(\mathbb{R})$.
\end{corollary}
Now let us prove that the inverse map $r_j\mapsto q$ is Lipschitz continuous.
\begin{proposition}\label{r-q-Lip}
	Under assumptions of Proposition \ref{Hss}, for any $t\in\mathbb{R}$ and $R>0$ there exists 
	$0<C_R(t)<C_R(1+T)$, 
	$C_R>0$, $t\in[-T,T]$ for any $T>0$,
	such that for all $r_j$, $\check{r}_j$ with $\max\{\|r_j\|_{H^{1,1}},\|\check r_j\|_{H^{1,1}}\}<R$, $j=1,2$,
	\begin{equation}\label{r-q-lip}
	\|q(\cdot,t)-\check q(\cdot,t)\|_{H^{1,1}(\mathbb{R})}\leq C_R(t)
	\|r_j(k)-\check{r}_j(k)\|_{H^{1,1}(\mathbb{R})}, \quad j=1,2.
	\end{equation}
	%	for any fixed $t\in\mathbb{R}$.
	Here  $q$ and $\check{q}$ correspond to the reflection coefficients $r_j$ and $\check{r}_j$ respectively.
\end{proposition}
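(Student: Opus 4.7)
The plan is to mirror the proof of Proposition \ref{Hss} step by step, estimating differences rather than individual quantities. Starting from the decomposition \eqref{q-int-1}, write
\begin{equation*}
q(x,t)-\check q(x,t)=(I_1-\check I_1)(x,t)+(I_2-\check I_2)(x,t),
\end{equation*}
where $\check I_j$ are constructed from $\check r_j$ in the obvious way. The linear term
\begin{equation*}
(I_1-\check I_1)(x,t)=-\frac{\sigma}{\pi}\int_{-\infty}^{+\infty}(r_2-\check r_2)(\zeta)\,e^{-2\I\zeta x-4\I\zeta^2 t}\,d\zeta
\end{equation*}
is essentially a Fourier transform, and since the multiplier $e^{-4\I\zeta^2 t}$ preserves $H^{1,1}(\mathbb{R})$ with norm growing at most polynomially in $t$, Plancherel's theorem gives $\|I_1-\check I_1\|_{H^{1,1}}\le C_1(t)\|r_2-\check r_2\|_{H^{1,1}}$, and the analogous bound for the $r_1$-contribution is produced after the second factorization below.

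For the nonlinear part $I_2-\check I_2$, I would apply the second resolvent identity
\begin{equation*}
(I-\mathcal{C}_w)^{-1}-(I-\mathcal{C}_{\check w})^{-1}=(I-\mathcal{C}_w)^{-1}\mathcal{C}_{w-\check w}(I-\mathcal{C}_{\check w})^{-1},
\end{equation*}
splitting the difference into one piece where $(w^++w^-)$ is perturbed and another where $\mathcal{C}_w(I)$ is perturbed. Using the Sobolev embedding $\|r_j-\check r_j\|_{L^\infty}\le C\|r_j-\check r_j\|_{H^{1,1}}$ to control $\|w^\pm-\check w^\pm\|_{L^\infty}$, together with a uniform bound on $\|(I-\mathcal{C}_w)^{-1}\|_{L^2\to L^2}$ for $w$ in the $R$-ball of admissible data, the pointwise estimate of the type \eqref{I-2-est} becomes linear in $\|r_j-\check r_j\|_{H^{1,1}}$. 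To upgrade to the $H^{0,1}$ norm in $x$, I would repeat the triangular factorization trick from Step 1 of Proposition \ref{Hss}: use \eqref{upp-low} and the decay estimate \eqref{C_+-norm} for $x\le 0$, and pass to $\tilde M=M\delta^{-\sigma_3}$ via \eqref{low-upp} and \eqref{C_--norm} for $x\ge 0$. The extra Lipschitz dependence of the scalar function $\delta$ defined by \eqref{delta} on the product $r_1r_2$ follows from standard bounds on the Cauchy operator, using the fact that $1+\sigma r_1 r_2$ stays bounded away from zero uniformly in the $R$-ball thanks to condition~2 of the underlying hypothesis.

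For the $H^{1,0}$ component, I would invoke Corollary \ref{x-deriv-M} and the identity \eqref{diff-mu-1-x}, which yields an explicit representation of $\partial_x q$ of the same structural form as $q$ itself, but with $\mu(w^++w^-)$ replaced by the right-hand side of \eqref{diff-mu-1-x}. Taking differences and applying the resolvent identity once more, the extra factor of $k$ in the integrand is absorbed by the additional unit of derivative regularity in $H^{1,1}$, so that every term is again linear in $\|r_j-\check r_j\|_{H^{1,1}}$. Combining all pieces produces the claimed Lipschitz estimate with a constant $C_R(t)$ depending polynomially on $t$ through the exponentials $e^{\pm 4\I k^2 t}$ and on $R$ through the resolvent bounds.

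The principal obstacle is the uniform bound $\|(I-\mathcal{C}_w)^{-1}\|_{L^2\to L^2}\le C(R,t)$ across the whole $R$-ball of admissible reflection coefficients. Pointwise invertibility for each admissible $r_j$ is part of the hypothesis, and local stability under small $L^\infty$ perturbations of $w$ is automatic since invertible operators form an open set; the uniform $R$-bound should then follow from a continuity argument combined with the embedding $H^{1,1}\hookrightarrow L^\infty\cap H^1$, which converts $H^{1,1}$ smallness of perturbations to $L^\infty$ smallness of $w^\pm$. Once this uniform resolvent bound is in place, all estimates above compose linearly and the proposition follows.
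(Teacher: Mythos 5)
Your proposal follows essentially the same route as the paper's proof: the decomposition $q-\check q=(I_1-\check I_1)+(I_2-\check I_2)$ coming from \eqref{q-int-1}, the Fourier/Plancherel bound for the linear term, a resolvent-identity splitting of $I_2-\check I_2$ into pieces in which $w^\pm$, $\mathcal{C}_w(I)$ and $(I-\mathcal{C}_w)^{-1}$ are perturbed one at a time, the bound $\|\mathcal{C}_w-\mathcal{C}_{\check w}\|_{L^2\to L^2}\le C\|w-\check w\|_{L^\infty}$, and the decay estimates \eqref{C_+-norm}, \eqref{C_--norm} combined with \eqref{diff-mu-1-x} to control the $H^{0,1}$ and $H^{1,0}$ components. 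Two concrete deviations from the paper's argument should be flagged. First, the $j=1$ case of \eqref{r-q-lip}: the paper obtains it from the second reconstruction formula \eqref{q(-x,t)}, i.e.\ from the $(21)$ entry of $M$, whose leading term is a Fourier transform of $r_1$. Your remark that ``the analogous bound for the $r_1$-contribution is produced after the second factorization'' does not accomplish this: in the factorization \eqref{low-upp} the relevant $(12)$ entry is $\sigma r_2(k)\delta_+^2(k)e^{-2\I kx-4\I k^2t}/(1+\sigma r_1(k)r_2(k))$, which still carries $r_2$ as the leading factor, so its difference is not controlled by $\|r_1-\check r_1\|_{H^{1,1}}$ alone; the device you need here is \eqref{q(-x,t)}, which you do not invoke.

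Second, the ``principal obstacle'' you identify, a resolvent bound $\|(I-\mathcal{C}_w)^{-1}\|_{L^2\to L^2}\le C(R,t)$ uniform over the whole $R$-ball of admissible data, is not settled by the openness/continuity argument you sketch: invertibility being an open condition gives local stability around each admissible datum, but on a non-compact admissible set the resolvent norms can still be unbounded, so no uniform constant follows. To be fair, the paper's own proof is silent on this point and simply writes unlabeled constants (uniformity in the applications comes from the smallness hypotheses $\|r_j^{\mathrm{reg}}\|_{L^\infty}<1$), so your attempt is at the same level of rigor, but the argument you offer should not be presented as a proof of uniformity. Relatedly, Proposition \ref{Hss} has no ``condition 2'': the nonvanishing of $1+\sigma r_1r_2$ needed for $\delta$ in \eqref{delta} follows, for each fixed datum, from the determinant relation \eqref{r-a} and Assumption 1, not from a uniform smallness hypothesis valid on the $R$-ball.
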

\begin{proof}
	From \eqref{q-int-1} we have that
	\begin{equation}\label{diff-q-est}
	\|q(\cdot,t)-\check q(\cdot,t)\|_{H^{1,1}}\leq
	\|I_1(\cdot,t)-\check I_1(\cdot,t)\|_{H^{1,1}}
	+\|I_2(\cdot,t)-\check I_2(\cdot,t)\|_{H^{1,1}},
	\end{equation}
	where $\check I_l$, $l=1,2$ are defined as   $I_l$, $l=1,2$ but with $r_j$ replaced by  $\check r_j$.
	Since the Fourier transform is continuous in $H^{1,1}$, it is enough to estimate the second term in r.h.s. of \eqref{diff-q-est} only.
	To this end, we observe that
	\begin{small}
		\begin{align}
		\nonumber
		\pi|I_2(x,t)-\check I_2(x,t)|&\leq\left|
		\int_{-\infty}^{+\infty}
		(I-\mathcal{C}_w)^{-1}\mathcal{C}_w(I)
		(w^+ + w^- -\check w^+ -\check w^-)(x,t,\zeta)\,d\zeta
		\right|\\
		\nonumber
		&\quad+\left|
		\int_{-\infty}^{+\infty}
		(I-\mathcal{C}_{w})^{-1}
		(\mathcal{C}_w(I)-\mathcal{C}_{\check{w}}(I))
		(\check w^+ +\check w^-)(x,t,\zeta)\,d\zeta
		\right|\\
		\nonumber
		&\quad+\left|
		\int_{-\infty}^{+\infty}
		[(I-\mathcal{C}_w)^{-1}-(I-\mathcal{C}_{\check w})^{-1}]
		\mathcal{C}_{\check w}(I)
		(\check w^+ +\check w^-)(x,t,\zeta)\,d\zeta
		\right|\\
		&\equiv|I_{21}(x,t)|+|I_{22}(x,t)|+|I_{23}(x,t)|.
		\end{align}
	\end{small}
	The first integral can be estimated as follows (cf. \eqref{I-2-est}):
	\begin{equation*}
	|I_{21}(x,t)|\leq\
	C\|\left(\mathcal{C}_w(I)\right)_{12}\|_{L^2}
	\|r_2-\check r_2\|_{L^2}\leq
	C\|\left(\mathcal{C}_w(I)\right)_{12}\|_{L^2}
	\|r_2-\check r_2\|_{H^{1,1}},\quad C>0.
	\end{equation*}
	Using \eqref{C_+-norm}, \eqref{C_--norm} and \eqref{diff-mu-1-x} we conclude that $\|I_{21}\|_{H^{1,1}}$ satisfies the required estimate.
	Taking into account that $\|\mathcal{C}_w(I)-\mathcal{C}_{\check{w}}(I)\|_{L^2}^2
	\leq C(1+T)^2\frac{\|r_2-\check r_2\|_{H^{1,1}}^2}{1+x^2}$ and using \eqref{diff-mu-1-x}, we have the needed estimate for $I_{22}$.
	Using the fact that
	$$
	\|\mathcal{C}_w-\mathcal{C}_{\check w}\|_{L^2}
	\leq C\|w-\check w\|_{L^{\infty}}\leq
	C\|w-\check w\|_{H^{1,1}},
	$$
	and that
	$$
	\|(I-A)^{-1}
	-(I-B)^{-1}\|_{L^2}
	\leq\|(I-A)^{-1}((I-B)-(I-A))(I-B)^{-1}\|_{L^2}
	\leq C\|A-B\|_{L^2},
	$$
	for any operators $A,B$ such that $(I-A)^{-1}$ and $(I-B)^{-1}$ exist and bounded, we have the Lipschitz continuity for $|I_{23}|$.
	
	Finally, to obtain \eqref{r-q-lip} for $j=1$, we use the inverse formula \eqref{q(-x,t)} involving the $(21)$ entry of $M$ (in contrast with the use of the  $(12)$ entry in \eqref{q(x,t)}).
\end{proof}

In \cite{BC84} Beals and Coifman proved that if $r_j\in \mathcal{S}$ and the RH problem is solvable, then the recovered potential $q(x,0)$ also belongs to the Schwartz space.
Here we consider the RH problem depending on $x,t$ and prove the following
\begin{proposition}\label{r-j-Schwartz}
	Consider the basic RH problem with $r_j(k)\in\mathcal{S}(\mathbb{R})$ and without residue conditions.
	Suppose that this RH problem has an $L^2$ solution.
	%\red{Assume also that and.}
	Then   $\partial_t^nq(\cdot,t)\in\mathcal{S}(\mathbb{R})$ exists for all $n\in\mathbb{N}\cup\{0\}$ and $t\in\mathbb{R}$.
\end{proposition}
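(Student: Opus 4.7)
The plan is to first establish that $q(\cdot,t) \in \mathcal{S}(\mathbb{R})$ for each fixed $t$, and then use the fact that $q$ satisfies the NNLS equation to inductively propagate Schwartz regularity to all time derivatives $\partial_t^n q$.

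For the first step, I would upgrade Proposition \ref{Hss} to $q(\cdot,t) \in H^{s,m}(\mathbb{R})$ for every $s, m \in \mathbb{N}$, using that $\mathcal{S} = \bigcap_{s,m} H^{s,m}$. To obtain the weighted decay $q \in H^{0,m}$, the estimate \eqref{C_+-norm} generalizes (for $x \leq 0$) to $\|\mathcal{C}_+(f e^{-2\I kx})\|_{L^2}^2 \leq C_m \|f\|_{H^{m,0}}^2 /(1+x^2)^m$, by inserting the weight $(1+z^2)^m$ against $\int_{-\infty}^{2x} |\hat f(z)|^2\,dz$; the Schwartz property of $r_j$ supplies arbitrary regularity of $f$. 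The alternative factorization \eqref{low-upp} handles $x \geq 0$ symmetrically. To obtain regularity $q \in H^{s,0}$, I would differentiate the singular integral equation $(I - \mathcal{C}_w)\mu^\# = \mathcal{C}_w(I)$ iteratively in $x$. Each derivative brings down factors of $k$ from the exponentials in $w^\pm$, which are absorbed by the Schwartz decay of $r_j$; the operator $(I - \mathcal{C}_w)^{-1}$ remains bounded on $L^2$, yielding $\partial_x^s \mu \in L^2$ and thus $\partial_x^s q \in H^{0,m}$ for every $m$ via the same weighted-decay argument applied to the differentiated representation of $q$.

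For the second step, by the same differentiation technique one shows that $\partial_t \mu$ exists in $L^2$: now $\partial_t$ on $w^\pm$ brings a factor of $k^2$, still controlled since $k^2 r_j \in \mathcal{S}$. This provides $\partial_t M$ in the sense of Corollary \ref{t-deriv-M}, so by Proposition \ref{inv-t} the reconstructed $q(x,t)$ satisfies the NNLS equation \eqref{nnls} classically. I would then proceed by induction on $n$: from the NNLS equation,
\begin{equation*}
\partial_t^{n+1} q(x,t) = \I \partial_x^2 \partial_t^n q(x,t) + 2\I \sigma\, \partial_t^n\bigl(q^2(x,t)\overline{q(-x,t)}\bigr),
\end{equation*}
and the inductive hypothesis $\partial_t^j q(\cdot,t) \in \mathcal{S}$ for $j \leq n$, together with the closure of $\mathcal{S}(\mathbb{R})$ under products, reflections $x \mapsto -x$, and differentiation, give $\partial_t^{n+1} q(\cdot,t) \in \mathcal{S}$. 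The existence of each $\partial_t^n q$ as an actual limit (not a formal expression) follows from a parallel bootstrap showing $\partial_t^n \mu \in L^2$ by repeated differentiation of the integral equation, with each step absorbing $k^{2n}$ against the Schwartz tail of $r_j$.

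The main obstacle is showing that the weighted-space bootstrap iterates cleanly: one must verify that $(I - \mathcal{C}_w)^{-1}$ acts continuously on all the spaces $H^{s,m}$ encountered, and that commutators with the operations $x^m \cdot$, $\partial_x^s$, and $\partial_t^n$ remain bounded. The saving feature is that every such commutator introduces only polynomial factors in $k$, which are neutralized by the rapid decay of $r_j \in \mathcal{S}$; this is essentially the Beals--Coifman observation \cite{BC84} extended to include the additional parameter $t$ through the phase $e^{-4\I k^2 t}$.
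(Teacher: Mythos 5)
Your proposal is correct, and its first half is essentially the paper's argument: the paper likewise upgrades the weighted estimate \eqref{C_+-norm} to $\|\mathcal{C}_\pm(f(k)e^{-2\I kx})\|_{L^2}^2\leq C\|f\|_{H^{s,0}}^2/(1+x^2)^s$ to get $q(\cdot,t)\in H^{0,s}$, and then uses the $x$-derivative identity \eqref{diff-mu} (rather than your repeated differentiation of $(I-\mathcal{C}_w)\mu^{\#}=\mathcal{C}_w(I)$, which amounts to the same absorption of powers of $k$ by the Schwartz decay of $r_j$) to get $q(\cdot,t)\in H^{s,0}$, hence $q(\cdot,t)\in\mathcal{S}$. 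Where you genuinely diverge is the propagation of $t$-derivatives: the paper stays entirely inside the Beals--Coifman representation, deriving from the $t$-equation of the Lax pair the identity \eqref{diff-mu-t} for $\partial_t^n[\mu(w^++w^-)]$, whose coefficients depend only on $\partial_x^m(Q,\dots,\partial_t^{n-1}Q)$, and inducting on $n$ to get $\partial_t^n q(\cdot,t)\in H^{s,s}$ directly; you instead first reconstruct the PDE — arguing that $\partial_x M$ and $\partial_t M$ exist so that Propositions \ref{inv-x} and \ref{inv-t} apply — and then bootstrap through equation \eqref{nnls} using the algebra and reflection-invariance of $\mathcal{S}$. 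Your route makes the induction on $n$ essentially trivial once the PDE is in hand, but it carries the extra burden you only partially acknowledge: to conclude that $q$ satisfies \eqref{nnls} classically you need \emph{both} Lax equations (Proposition \ref{inv-t} alone is not enough, you also need Proposition \ref{inv-x}) together with equality of the mixed partials of $M$ in $(x,t)$, i.e.\ enough joint regularity to run the compatibility argument; your ``parallel bootstrap'' for $\partial_t^n\mu\in L^2$ supplies this, and since Propositions \ref{inv-x}, \ref{inv-t} are proved independently of the present statement there is no circularity, but this step should be made explicit. The paper's version avoids invoking the PDE altogether, which is why in Theorem \ref{th-class-glob} the Schwartz property can be quoted \emph{before} the NNLS equation is established; your version proves both simultaneously, which is logically fine but reverses that order.
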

\begin{proof}
	Notice that since $r_j\in\mathcal{S}$, the function $e^{\pm4\I k^2t}r_j$ belongs to $H^{s,s}(\mathbb{R})$ for any $s\in\mathbb{N}$.
	Arguing as in Step 1 of Proposition \ref{Hss} we obtain the following estimates (cf. \eqref{C_+-norm} and \eqref{C_--norm}):
	\begin{equation}
	\|\mathcal{C}_\pm(f(k)e^{-2\I kx})\|_{L^2}^2
	\leq\frac{C\|f\|^2_{H^{s,0}}}{(1+x^2)^s},\quad
	\pm x\leq 0,
	\end{equation}
	which implies that $q(\cdot,t)\in H^{0,s}(\mathbb{R})$ for any $s\in\mathbb{N}$.
	Then, taking into account that
	\begin{equation}\label{diff-mu}
	\begin{split}
	\partial_x^s[\mu(w^++w^-)]=&\,
	(-\I)^s k^s[\underbrace{
		\sigma_3,[\sigma_3,\dots,[\sigma_3,\mu(w^++w^-)]]}_s]\\
	&+\sum\limits_{l=0}^{s-1}k^{l}
	(c_{l,1}\mu(w^++w^-)+c_{l,2}\mu(w^++w^-)\sigma_3),
	\end{split}
	\end{equation}
	where $c_{l,j}$, $j=1,2$ depend on $Q,\dots\partial_x^{s-1} Q$, one obtains that $q(\cdot,t)\in H^{s,0}(\mathbb{R})$.
	Therefore, $q(\cdot,t)\in H^{s,s}(\mathbb{R})$ for any $s\in\mathbb{N}$, which implies that $q(\cdot,t)\in\mathcal{S}(\mathbb{R})$.
	
	From the $t$ equation in the Lax pair one obtains that
	\begin{equation}\label{diff-mu-t}
	\begin{split}
	\partial_t^n[\mu(w^++w^-)]=&\,
	(-2\I)^n k^{2n}[\underbrace{
		\sigma_3,[\sigma_3,\dots,[\sigma_3,\mu(w^++w^-)]]}_n]\\
	&+\sum\limits_{l=0}^{2n-1}k^{l}
	(d_{l,1}\mu(w^++w^-)+d_{l,2}\mu(w^++w^-)\sigma_3),
	\end{split}
	\end{equation}
	where $d_{l,j}$, $j=1,2$ depend on $\partial_x^m(Q,\dots,\partial_t^{n-1}Q)$, $m=0,1$.
	Therefore, using the smoothness and rapid decay of $w^\pm$ and reasoning  by induction on $n$, we conclude that
	$\partial_t^n q(\cdot,t)\in H^{s,s}(\mathbb{R})$ for any $s\in\mathbb{N}$, from which the statement of the proposition follows.
\end{proof}
\begin{corollary}\label{t-deriv-M}
	From \eqref{diff-mu-t} with $n=1$ it follows that if $r_j\in H^{0,2}\cap L^\infty$, then the $L^2$ solution $M(x,t,k)$ has a partial derivative w.r.t.\,\,$t$.
\end{corollary}
%\begin{cor}\label{M-smooth}
%	From \eqref{M-int-eq}, \eqref{diff-mu} and \eqref{diff-mu-t} it follows that if $r_j\in\mathcal{S}$, then there exists
%	$\partial_x^s\partial_t^n M(x,t,\cdot)\in L^2$
%	for all $s,n\in\mathbb{N}\cup\{0\}$.
%\end{cor}
\begin{proposition}\label{M-fixed-k}
	The $L^2$ solution $M(x,t,k)$ of the basic RH problem without residue conditions satisfies the following properties:
	\begin{enumerate}
		\item if $r_j(k)\in H^{1,1}(\mathbb{R})$, then $M(\cdot,t,k_0)\in H^{1,1}(\mathbb{R})$ for any $k_0\in\mathbb{C}\setminus\mathbb{R}$ and $t\in\mathbb{R}$;
		\item if $r_j(k)\in \mathcal{S}(\mathbb{R})$, then $\partial_t^nM(\cdot,t,k_0)\in \mathcal{S}(\mathbb{R})$ for any $n\in\mathbb{N}\cup\{0\}$, $k_0\in\mathbb{C}\setminus\mathbb{R}$ and $t\in\mathbb{R}$.
	\end{enumerate}
	Moreover, for any fixed $k_0\in\mathbb{C}\setminus\mathbb{R}$ and $t\in\mathbb{R}$, $M(\cdot, t, k_0)$ is Lipschitz continuous, i.e.,
	for any $t\in\mathbb{R}$ and $R>0$ there exists $C_R(t)>0$ such that for all $r_j$, $\check{r}_j$ with $\max\{\|r_j\|_{H^{1,1}},\|\check r_j\|_{H^{1,1}}\}<R$, $j=1,2$ we have
	\begin{equation}\label{r-M-lip}
	\|M(\cdot,t,k_0)-\check M(\cdot,t,k_0)\|_{H^{1,1}(\mathbb{R})}\leq C_R(t)
	\|r_j(k)-\check{r}_j(k)\|_{H^{1,1}(\mathbb{R})}, \quad j=1,2.
	\end{equation}
	Here $M$ and $\check M$ are the solutions of RH problem wih reflection coefficients $r_j$ and $\check r_j$ respectively.
\end{proposition}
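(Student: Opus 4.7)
The proof strategy is to parallel the arguments of Propositions \ref{Hss}, \ref{r-q-Lip} and \ref{r-j-Schwartz}, exploiting the key fact that, for fixed $k_0\in\mathbb{C}\setminus\mathbb{R}$, the Cauchy kernel $\frac{1}{\zeta-k_0}$ is a smooth multiplier on $\mathbb{R}$ that is both bounded and $O(|\zeta|^{-1})$ at infinity (with constants depending only on $\operatorname{Im}k_0$). The starting point is the integral representation
\begin{equation*}
M(x,t,k_0)-I=\frac{1}{2\pi\I}\int_{-\infty}^{+\infty}\frac{\mu(x,t,\zeta)(w^+(x,t,\zeta)+w^-(x,t,\zeta))}{\zeta-k_0}\,d\zeta,
\end{equation*}
which plays the same role here that the large-$k$ coefficient $Q_1(x,t)$ plays in recovering $q(x,t)=2\I Q_{1,12}(x,t)$ in Proposition \ref{Hss}. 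Inserting $\mu=I+(I-\mathcal{C}_w)^{-1}\mathcal{C}_w(I)$ yields the decomposition $M(\cdot,t,k_0)-I=\tilde I_1(\cdot,t,k_0)+\tilde I_2(\cdot,t,k_0)$ analogous to \eqref{q-int-1}.

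For Item 1, I would first establish $M(\cdot,t,k_0)\in H^{0,1}(\mathbb{R})$. The easy piece $\tilde I_1$ is a Fourier-type integral of the function $r_j(\zeta)e^{\pm 4\I\zeta^2 t}/(\zeta-k_0)$, which belongs to $H^{1,1}(\mathbb{R})$ whenever $r_j\in H^{1,1}$, so $\tilde I_1(\cdot,t,k_0)\in H^{1,1}$ by Plancherel. For $\tilde I_2$ I would repeat verbatim the decay-in-$x$ argument of Step 1 of Proposition \ref{Hss}: split into $x\le 0$ and $x\ge 0$, use the factorization \eqref{upp-low} in the former case and \eqref{low-upp} together with the function $\delta(k)$ from \eqref{delta} in the latter, and apply the bounds \eqref{C_+-norm}--\eqref{C_--norm}. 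The bounded multiplier $\frac{1}{\zeta-k_0}$ does not affect these estimates beyond a constant factor depending on $\operatorname{Im}k_0$, because $\frac{r_j(\zeta)}{\zeta-k_0}\in H^{1,0}(\mathbb{R})$ with norm controlled by $\|r_j\|_{H^{1,0}}/\operatorname{Im}k_0$ and similar quantities. Smoothness in $x$ (i.e.\ $\partial_x M(\cdot,t,k_0)\in L^2$) is then handled either by differentiating the integrand and invoking \eqref{diff-mu-1-x}, or, once an $L^\infty$ bound on $M(\cdot,t,k_0)$ has been secured via Cauchy-Schwarz applied to the integrand, directly from the $x$-equation of the Lax pair provided by Proposition \ref{inv-x}:
\begin{equation*}
\partial_x M(x,t,k_0)=-\I k_0[\sigma_3,M(x,t,k_0)]+U(x,t)M(x,t,k_0),
\end{equation*}
using that $U(\cdot,t)\in L^2\cap L^\infty$ since $q(\cdot,t)\in H^{1,1}\subset L^\infty$ by Proposition \ref{Hss}.

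For Item 2, I would bootstrap the above argument to $H^{s,s}(\mathbb{R})$ for every $s\in\mathbb{N}$, exactly as in Proposition \ref{r-j-Schwartz}: decay is upgraded via $\|\mathcal{C}_\pm(fe^{-2\I\zeta x})\|_{L^2}^2\le C\|f\|_{H^{s,0}}^2/(1+x^2)^s$, smoothness via the identities \eqref{diff-mu} and \eqref{diff-mu-t} combined with induction on $s$ and $n$ and the fact that $\partial_t^m q(\cdot,t)\in\mathcal{S}(\mathbb{R})$ for all $m$. Since $\bigcap_{s\in\mathbb{N}}H^{s,s}=\mathcal{S}(\mathbb{R})$, this yields $\partial_t^n M(\cdot,t,k_0)\in\mathcal{S}(\mathbb{R})$.

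The Lipschitz bound \eqref{r-M-lip} is obtained by a direct transcription of the proof of Proposition \ref{r-q-Lip}: one splits $M(x,t,k_0)-\check M(x,t,k_0)$ into the same three pieces $\tilde I_{21},\tilde I_{22},\tilde I_{23}$ and applies the same three bounds (uniform boundedness of $(I-\mathcal{C}_w)^{-1}$ for $\|r_j\|_{H^{1,1}}<R$, the operator difference identity for $(I-A)^{-1}-(I-B)^{-1}$, and \eqref{C_+-norm}--\eqref{C_--norm}), with the kernel $\frac{1}{\zeta-k_0}$ sitting harmlessly inside the integrand. The principal technical obstacle I foresee is the bookkeeping needed to verify that every estimate used in Propositions \ref{Hss}, \ref{r-q-Lip}, and \ref{r-j-Schwartz} transfers when the constant $1$ (used implicitly to extract the $O(k^{-1})$ term) is replaced by the multiplier $\frac{1}{\zeta-k_0}$; however, since this multiplier is bounded, smooth, and decays at infinity, it only changes the constants and does not impact the $x$-weighted $L^2$ structure of the relevant integrals.
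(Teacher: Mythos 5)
Your proposal is correct and follows essentially the same route as the paper: the paper's (very terse) proof likewise starts from the representation $M(x,t,k_0)-I=\frac{1}{2\pi\I}\int_{\mathbb{R}}\frac{\mu(w^++w^-)}{\zeta-k_0}\,d\zeta$, observes that $\frac{w^\pm(x,t,\cdot)}{(\cdot)-k_0}$ lies in $H^{1,1}(\mathbb{R})$ (respectively $\mathcal{S}(\mathbb{R})$) when $r_j$ does, and then reruns the arguments of Propositions \ref{Hss}, \ref{r-j-Schwartz} and \ref{r-q-Lip}, exactly as you do. Your added detail (the decomposition into $\tilde I_1+\tilde I_2$, the bounds \eqref{C_+-norm}--\eqref{C_--norm}, and the alternative smoothness argument via the $x$-equation of the Lax pair) simply fills in the steps the paper leaves implicit.
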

\begin{proof}
	Using the representation
	(see \eqref{M-int-eq})
	\begin{equation}
	M(x,t,k_0)-I=\frac{1}{2\pi\I}\int_{-\infty}^{+\infty}
	\frac{\mu(x,t,\zeta)(w^+(x,t,\zeta)+w^-(x,t,\zeta))}
	{\zeta-k_0}\,d\zeta,
	\end{equation}
	and taking into account that
	$\frac{w^\pm(x,t,\cdot)}{(\cdot)-k_0} \in H^{1,1}(\mathbb{R})$ for $r_j(k)\in H^{1,1}(\mathbb{R})$, the arguments in the proof of Proposition \ref{Hss} imply claim 1.
	Then observing that $\frac{w^\pm(x,t,\cdot)}{(\cdot)-k_0} \in \mathcal{S}(\mathbb{R})$ for $r_j(k)\in \mathcal{S}(\mathbb{R})$ and arguing similarly as in the proof of
	Proposition \ref{r-j-Schwartz}, we arrive at claim 2.
	Moreover,
	arguing  as in the proof of Proposition \ref{r-q-Lip}, we obtain \eqref{r-M-lip}.
\end{proof}
\begin{remark}\label{cont-M}
	Observe that for $r_j(k)\in H^{1,1}(\mathbb{R})$, the matrix $M(x,t,k_0)$ is continuous w.r.t.\,\,$x,t$ for all fixed $k_0\in\mathbb{C}\setminus\{\mathbb{R}\}$.
	Indeed, taking $r_{j,n}\in\mathcal{S}$ such that $r_{j,n}\to r_j$ in $H^{1,1}$ and using
	\eqref{r-M-lip} we have
	$$
	\sup\limits_{t\in[-T,T]}
	\|M(\cdot,t,k_0)-M_n(\cdot,t,k_0)\|_{H^{1,1}(\mathbb{R})}\to 0,\quad n\to\infty,
	$$
	where $M_n$ is the solution of the RH problem involving the reflection coefficients $r_{j,n}$ (notice that since small perturbations of an invertible operator is invertible,  the RH problem remains solvable for small $H^{1,1}$ perturbations of $r_j$).
	Then using Morrey's inequality
	%(see, e.g., Section 5.6.2, Theorem 5 in \cite{E})
	we conclude that $M_n(\cdot,\cdot, k_0)\to M(\cdot,\cdot, k_0)$ in $C(\mathbb{R}\times[-T,T])$ and $M(\cdot,\cdot, k_0)$ is continuous as a limit of continuous functions.
\end{remark}
\end{appendices}

\bigskip

\end{document}